\documentclass[12pt,oneside,openany]{article}
\usepackage{tikz}
\usepackage{tikz-cd}

\usepackage{amssymb}   
\usepackage{amsthm}
\usepackage{amsmath}
\usepackage{amsfonts}
\usepackage{latexsym}
\usepackage{graphics}
\usepackage{fancyhdr}
\usepackage{epstopdf}
\usepackage{setspace}
\usepackage{url}
\usepackage[top=1in, bottom=1in, left=.75in, right=.75in]{geometry}

\usepackage[vcentermath,enableskew]{youngtab}

\newtheorem{theorem}{Theorem}[section]

\newtheorem{prop}[theorem]{Proposition}

\newtheorem{corollary}[theorem]{Corollary}
\newtheorem{definition}[theorem]{Definition}

\def\R{{\mathbb R}} 
\def\N{{\mathbb N}} 

\title{The Natural Extension for the Triangle Map\\ (a Multi-dimensional Continued Fraction)\\ with \\ An Internal Symmetry from Young Conjugation}
\author{Joe Fox and Thomas Garrity\\
Department of Mathematics\\
Williams College\\
Williamstown, MA 01267\\
tgarrity@williams.edu}

\begin{document}
\maketitle

\begin{abstract} The natural extension of the triangle map  (a type of multi-dimensional continued fraction algorithm) is completely described  in all possible dimensions.  The motivation and inspiration for this natural extension stem from  the triangle map's recent link to the classical study of integer partitions.   Inspired by Young conjugation for integer partitions, we show that the natural extension has an internal symmetry and allows the natural extension to be subdivided into four natural subdomains.  This appears to be new even for the classical case of the natural extension for continued fractions, namely for both the classical Gauss map and the classical  Farey map.

 \end{abstract}

\section{Introduction}

During the COVID lockdown, Bonanno, del Vigna, Isola and the second author realized that  a particular multi-dimensional continued fraction algorithm (the triangle map) could be used to gain insight into classical integer partition theory.  In the year or so after the main part of the lockdown, Wael Baalbaki joined us, resulting in the papers \cite{BBDGI, BG}.  This work critically used the triangle map to give us information about integer partitions.  We of course wondered about turning this around, and using integer partitions to discover something new about the triangle map (which in the one-dimensional case is the classical Gauss map and Farey map for continued fractions).

The current two authors  strongly suspected that these earlier papers  provided a possible method for producing natural extensions for the triangle map.  This is the goal of this paper.  We will see that integer partitions provide a quite clean geometric model for the natural extensions.  Further, the most natural symmetry for integer partitions is Young conjugation (just flip the Young shapes).  While we have no Young shapes, we do have a Young conjugation. We use the Young conjugation to find an internal symmetry in the natural extension, for each dimension.  This seems to be new even for the classical continued fraction natural extension. 

In Section \ref{natural extension}, we recall the basics of natural extensions, with  our goal being to find concrete representations for four different versions of the triangle map; the homogeneous (slow) triangle map, the non-homogeneous (slow), the homogeneous fast triangle map and the non-homogeneous fast triangle map. In Section \ref{Partitions}, we give a quick overview of integer partition theory.  In Section \ref{The homogeneous triangle map} we recall the homogeneous triangle map.  It is in Section \ref{The homogeneous extended triangle map} that we start new material.  It is here that we find the natural extension of the homogeneous triangle map. In Section \ref{affine projection} we find the natural extension for the non-homogeneous triangle map.  In Section \ref{``Graphs'' of Natural Extensions}, we first find the explicit graphs of the natural extension in the classical Farey $m=1$ case  and then use this to get ``graphs'' in higher dimensions.  In Section \ref{Young conjugation: An Internal Symmetry}, we develop the Young conjugate map and find new internal symmetries for the natural extensions.  We view it as  interesting that this symmetry is motivated from integer partition theory.  In Section \ref{fast homogeneous case}, Section \ref{nonhomogeneous fast case}  and  Section \ref{Graphs of the fast natural extensions}, we show analogous results for the fast versions of the algorithms.  In Section \ref{The invariant measure}, we use our work to find the relevant invariant measure for our maps.  Finally, in Section \ref{Siukaev}, we discuss the links of this paper with the recent work of Siukaev \cite{Siukaev}.

\subsection{Acknowledgements} 
We would like to thank Pierre Arnoux, Charles Fougeron, Kevin Jiang, Jacob Lehmann Duke, Lori Pedersen, Tanatswa Manyakara, Cesar Silva and David Siukaev for useful conversations.

\section{Background on natural extension}\label{natural extension}

\subsection{General idea of natural extension}\label{basics of natural extension}
We will be following and using the notation in Section 5.3 in Dajani and Kalle \cite{Dajani-Kalle}.

Let 
$$S:Y\rightarrow Y$$
be a measure preserving transformation on a measure space $(Y, \mathcal{G}, \nu),$ where $\mathcal{G}$ is the $\sigma$-algebra of measurable sets and $\nu$ is the measure. 
Rarely is the map $S$ one-to-one. 

The natural extension is a remarkable associated measure preserving transformation
$$T:X\rightarrow X$$
on a measure preserving space $(X, \mathcal{F}, \mu)$  such that 

\begin{enumerate}
\item the map $T$ is  a one-to-one function
\item there is an onto map 
$$\psi:X\rightarrow Y$$
such that  $\mathcal{F}$ is the $\sigma$-algebra containing all the $\sigma$-algebra $T^{(k)} \psi^{-1} (\mathcal{G})$ for all $k\geq 0.$
\item We have the commutative diagram

$$\begin{matrix} 
X & \xrightarrow{T} & X \\
\psi \downarrow &  & \downarrow \psi \\
Y & \xrightarrow{S} & Y \\
\end{matrix}$$

\item Every other such covering of $Y$ must factor through $X$

\end{enumerate}

It is the last point why the adjective ``natural'' is categorical in nature.
The second point is  linking the dynamics of $Y$ to the dynamics of $X$.

Theorem 5.3.1 (Rohlin) in \cite{Dajani-Kalle} is that the natural extension always exists, which is  where you can find a proof.  This theorem has another key part, namely that dynamical properties of the original many-to-one map $S:Y\rightarrow Y$ are passed on to the new one-to-one map $T:X   \rightarrow X.$  For example, the map $S$ is ergodic if and only if $T$ is ergodic.

It is relatively straightforward to find what $X$ is set-theoretically, to determine the map $T$ and to find the projection map $\psi$.  Simply set  

\begin{eqnarray*}
X&=& \{ (y_0, y_1, y_2, \ldots )\in Y\times Y \times Y \times \cdots : S(y_{n+1}) =y_n \}   \\
T(y_0, y_1, y_2, \ldots ) &=& (y_1, y_2, \ldots ) \\
\psi(y_0, y_1, y_2, \ldots ) &=& y_0.
\end{eqnarray*}

\subsection{Find the geometrical model } \label{m1 case} 

The previous section's construction of the natural extension is quite abstract.  For a specific map, we would like to be able to see if its  natural extension can be made to be somewhat more concrete.  Attempts to do this for various multidimensional continued fraction algorithms is the goal of   Arnoux and Labb\'{e}'s \cite{Arnoux-Labbe}, though this paper did not deal with the triangle map (the map of this paper). To get a more general feel of how the natural extension of a map can be used to understand number theory type questions, see Dajani and Kraaikamp \cite{Dajani-Kraaikamp}.

Arnoux and S. Labb\'{e}'s work was  built on Arnoux and Nogueira's \cite{Arnoux-Nogueira-93}  who found   concrete representation of the natural extension for the  classical Gauss map.  Recall that the Gauss map
$$G:(0,1) \rightarrow (0,1),$$
is defined as
$$G(x) = \frac{1}{x} - \left\lfloor \frac{1}{x} \right\rfloor,$$
where $\lfloor \alpha \rfloor$ is the greatest integer part of the real number $\alpha$.
This is an infinite-to-one map and hence far from being one-to-one. 

Arnoux and Nogueira showed that the domain of the natural extension is 
$$\Sigma = \left\{ (x, t) \in \R^2: 0<x<1, 0< t< \frac{1}{1+x} \right\}$$

\begin{center}
\begin{tikzpicture}[scale=4]

\draw[->](0,0)--(1.2, 0);
\draw[->](0,0)--(0, 1.2);
\draw[dashed](1,0)--(1, 1/2);

\node[right] at (1.2,0){$x$};
\node[left]at (0,1.2){$t$};

\draw[-](1,.03)--(1, -.03);
\node[below] at (1,0){$1$};

\draw[-](.03, 1)--(-.03, 1);
\node[left] at (0,1){$1$};

\draw[-](.03, 1/2)--(-.03, 1/2);
\node[left] at (0,1/2){$1/2$};

\draw[scale=1, domain=0:1, smooth, variable=\x] plot ({\x}, {1/(1+ \x)});

\draw[->](3/4,3/4)--(.4, .4);
\node[] at (.8,.78){$\Sigma$};

\end{tikzpicture}
\end{center}
with corresponding map
$$\tilde{G}(x,t) = \left( \frac{1}{x} - \left\lfloor \frac{1}{x} \right\rfloor, x-x^2 t \ \right).$$
We will later be reproving this in a more general context.  More importantly, we will see that there is a  natural involution map on $\Sigma$ corresponding to Young conjugation for integer partitions.  This involution is new.

Our inspiration for finding these geometric models will stem from integer partition theory.  Further we already know that the triangle map, in all dimensions, is ergodic  \cite{Garrity-Lehmann Duke}.  Thus we know that the natural extensions exist.  We simply have to find them.

\section{A rapid review of integer partitions: the basics}\label{Partitions}

An integer partition of a number $N$ is given by a sequence of positive integers $\lambda_0>\lambda_1>\cdots > \lambda_m$ and positive integers $k_0, \ldots, k_m$ such that 
$$k_0 \lambda_0 + \ldots + k_m \lambda_m = N.$$
The $\lambda_i$ are called the parts and the $k_j$ are called the multiplicities.  In the integer partition community, the partition is usually written as 
$$(\lambda_0^{k_0}, \ldots, \lambda_m^{k_m}) \vdash N.$$
 It is here traditional to list the seven  partitions of $N=5$:
 $$(5^1), (4^1,1^1), (3^1,2^1),(3^1,1^2), (2^2,1^1), (2^1,1^3),  (1^5)$$
$$(5) \times [1], (4,1) \times [1,1], (3,2)\times [1,1], (3, 1) \times[1,2], (2,1) \times [2,1], (2,1) \times [1,3] , (1) \times [5]$$
corresponding to 
$$5, 4+1, 3+2, 3+1+1, 2+2+1, 2+1+1+1, 1+1+1+1+1.$$

Using the notation 
$$(\overline{\lambda})= (\lambda_0, \ldots, \lambda_m), \; [\overline{k}] =[k_0, \ldots, k_m]$$
we will instead write partitions as:
$$(\overline{\lambda}) \times [ \overline{k}] \vdash N,$$
which can be interpreted as the following dot product
$$ [ \overline{k}]  \cdot (\overline{\lambda})^{\top}= N.$$

The dot product can be interpreted as a symplectic form, a rhetoric that sounds pretentious to people in the integer partition community but not that strange for people in the dynamical system community who are interested in natural extensions.  

The papers  \cite{BBDGI, BG} developed an almost internal symmetry on the space of partitions.  One of the themes in these two papers stemmed from the following.  Starting with  the parts vector $(\overline{\lambda})$ and multiplicity vector $[\overline{k}] $ for  fixed number $m$, suppose we have an $(m+1) \times (m+1)$ invertible matrix $A$ with integer entries that are all non-negative and so that all entries of the inverse matrix $A^{-1}$ are also non-negative integers.  Then we have 
$$N=[k_0, \ldots, k_m] \cdot \left(  \begin{array}{c} \lambda_0 \\ \vdots \\ \lambda_m \end{array} \right)  = [k_0, \ldots, k_m] A^{-1} \cdot A \left(  \begin{array}{c} \lambda_0 \\ \vdots \\ \lambda_m \end{array} \right).  $$
This means that given an integer partition $ (\overline{\lambda}) \times [ \overline{k}] $ of $N$, we can immediately write down the new integer partition 
$$(\overline{\lambda}A^{\top}) \times [ \overline{k}A^{-1}] .$$
Such $A$ are actually hard to find.

The papers  \cite{BBDGI, BG} used the triangle map to create two such $A$ to provide a map from partitions to partitions.  This in turn will allow us to find the natural extension of the triangle map.

\section{The homogeneous triangle map}\label{The homogeneous triangle map}
There are many different multi-dimensional continued fraction algorithms, each with their own properties, each with their own delights.  Here we will be looking at one particular one, the triangle map.  Good sources for general   background on multidimensional continued fraction algorithms are in  Karpenkov \cite{Karpenkov 1} and Schweiger \cite{Sch} .  

The original motivation for the development of the triangle map (the $m=2$ case) was the number-theoretic  Hermite problem \cite{Garrity1, Ass}.   Messaoudi,  Nogueira, and  Schweiger \cite{Mes} 
showed that  this map  is ergodic and discussed why it is dynamically interesting.  
As mentioned in \cite{BBDGI}, ``further dynamical properties were discovered by  Berth\'e, Steiner and Thuswaldner \cite{Berthe-Steiner-Thuswaldner}  and by Fougeron and  Skripchenko \cite{Fougeron-Skripchenko}.   Bonanno, Del Vigna and  Munday \cite {Bonanno- Del Vigna-Munday} and Bonanno and Del Vigna \cite{Bonanno-Del Vigna} recently used the $\R^3$ slow  triangle map to develop a tree structure of rational pairs in the plane.   In a recent preprint, Ito \cite{Ito} showed that the fast  map is self-dual (in section three of that paper).  These papers are all primarily motivated by questions from dynamics.'' There is also the recent paper \cite{Garrity-Lehmann Duke} which proves that the triangle map is ergodic in all dimensions.

Our motivation is that this particular multi-dimensional continued fraction algorithm can be used to find additional structure on integer partitions \cite{BBDGI, BG}.  Further, once we think in terms of integer partitions, we will see that we are led to an additional internal symmetry on the natural extension space, one which will not exist for the natural extensions of other multidimensional continued fractions.  In fact, this will provide an additional symmetry even for the classical natural extensions of the Gauss map.  This is in part what justifies this paper.

But here we just write down the triangle map.  Fix an integer $m>1$.  Set
\begin{eqnarray*}
\triangle &=& \{    (\lambda_0, \ldots, \lambda_m)\in \R^{m+1}: \lambda_0 > \ldots > \lambda_m >0 \} \\
\triangle_0 &=& \{(\lambda_0, \ldots, \lambda_m)\in \triangle: \lambda_0 < \lambda_1+\lambda_m \} \\
\triangle_1 &=& \{(\lambda_0, \ldots, \lambda_m)\in \triangle: \lambda_0 >\lambda_1+\lambda_m \} \\
\triangle_D &=& \{(\lambda_0, \ldots, \lambda_m)\in \triangle: \lambda_0 =\lambda_1+\lambda_m \} \\
\end{eqnarray*}
When we want to emphasize the number of variables, we will write these cones as $$  \triangle(m) ,  \triangle_0 (m), \triangle_1 (m), \triangle_D (m) .$$

Set
$$ \overline{\lambda}=  (\lambda_0, \ldots, \lambda_m).$$

\begin{definition} The (homogeneous) triangle map 
$$T:\triangle_0 \cup \triangle_1 \rightarrow \triangle$$
is  the map
\begin{eqnarray*}
T(\overline{\lambda}) &=& \left\{ \begin{array}{ccc}   T_0(  \overline{\lambda}) & \mbox{if} &  \overline{\lambda}\in \triangle_0 \\
T_1(  \overline{\lambda}) & \mbox{if} &  \overline{\lambda}\in \triangle_1\end{array}   \right.\\
&=& \left\{ \begin{array}{ccc} (\lambda_1, \lambda_2, \ldots, \lambda_m, \lambda_0-\lambda_1)& \mbox{if} &  \overline{\lambda}\in \triangle_0 \\
(\lambda_0-\lambda_m, \lambda_1, \lambda_2, \ldots, \lambda_m) & \mbox{if} &  \overline{\lambda}\in \triangle_1\end{array}   \right.
\end{eqnarray*}
\end{definition}

We can describe both $T_0$ and $T_1$ as $(m+1) \times (m+1)$ matrices:

$$\begin{array}{ccccc}
T\left(\overline{\lambda}^{\top}\right) &=&  T\left( \begin{array}{c} \lambda_0 \\ \lambda_1 \\ \vdots  \\ \lambda_m  \end{array}  \right)&=&  \left\{ \begin{array}{ccc} T_0 \left( \begin{array}{c} \lambda_0 \\ \lambda_1 \\ \vdots  \\ \lambda_n  \end{array}  \right),&\, \text{if }  \lambda_1+\lambda_n>x_0 \\
                                                       T_1  \left( \begin{array}{c} \lambda_0 \\ \lambda_1 \\ \vdots  \\ \lambda_n  \end{array}  \right),&\, \text{if } \lambda_1+\lambda_n<x_0 \end{array} \right.  \\
                                                         \end{array}$$

  where
\[
    T_0 = \begin{pmatrix} 0 & 1 & 0  & \cdots & 0\\ 0 & 0 & 1 & \cdots & 0 \\  \vdots & \vdots & \vdots &\vdots & \vdots    \\0 & 0 & 0 & \cdots & 1  \\1 & -1 &
        0 & \cdots & 0 \end{pmatrix}
    \quad\text{and}\quad T_1= \begin{pmatrix} 1 & 0 & 0  & \cdots &  0 & -1\\ 0 & 1 &0&   \cdots &  0 & 0 \\  \vdots & \vdots & \vdots &\vdots & \vdots & \vdots  \\ 0 & 0 & 0 & \cdots & 0 & 1  \\\end{pmatrix}\]
Thus for $n=2$, we have 
\[
    T_0 = \begin{pmatrix} 0 & 1 & 0 \\ 0 & 0 & 1 \\ 1 & -1 &
        0 \end{pmatrix}
    \quad\text{and}\quad T_1= \begin{pmatrix} 1 & 0 & -1 \\ 0 & 1 & 0
        \\ 0&0&1 \end{pmatrix}.
\]

Note that 

\[
  T_0^{-1} =
    \begin{pmatrix}
        1 & 0 & 0&  \cdots & 0 &1 \\
        1 & 0 & 0 & \cdots & 0 & 0 \\ 0 &1&  0 &\cdots & 0 & 0 \\ 0&0&1& \cdots & 0 & 0\\
   \vdots & \vdots & \vdots &\vdots & \vdots & \vdots  \\
        0 & 0 & 0 & \cdots & 1 & 0\end{pmatrix} \quad\text{and}\quad
  T_1^{-1}= \begin{pmatrix} 1 & 0 & 0  & \cdots &  0 & 1\\ 0 & 1 &0&   \cdots &  0 & 0 \\ \vdots & \vdots & \vdots &\vdots & \vdots & \vdots \\ 0 & 0 & 0 & \cdots & 0 & 1  \\\end{pmatrix}
\]

Both  $T_0^{-1}$ and $T_1^{-1}$ have non-zero entries.  This is not true for most other multi-dimensional continued fractions.  It is this that allows us to use the triangle map to study integer partitions. 

The  domain of $T$ is:
 
 \begin{eqnarray*}
 \mbox{Domain}(T) &=&  \mbox{Dom}(T) \\
 &=& \{\overline{\lambda}   \in \triangle: \forall n\in \N, T^n (\overline{\lambda} \times \overline{k})\not\in \triangle_D \}
 \end{eqnarray*}
 (This is the analog in the world of traditional continued fractions  of removing all rational numbers, since the continued fraction expansion of any rational number is finite.)
 
 This in turn allows us to define the slow triangle sequence of a $\overline{\lambda}   \in   \mbox{Dom}(T) $:
 
 \begin{definition} The slow triangle sequence for any $\overline{\lambda}   \in   \mbox{Dom}(T) $ is the sequence $(i_0, i_1, i_2, \ldots)$ of zeros and ones such that 
 $$T^n(\overline{\lambda} )  \in   \triangle_{i_n}$$
 for all non-negative $n$.
 \end{definition}
It is known that if this sequence is eventually periodic, then the ratios $\lambda_i/\lambda_0$ are all algebraic numbers in the same algebraic number field of degree less than or equal to $m$, where
$$\overline{\lambda}   =(\lambda_0, \ldots, \lambda_m),$$
which was one of the prime motivations for the definition of $T$.

The cylinders of $T$ are key, and are 
$$\triangle_{i_0, \ldots , i_n} = \{ \overline{\lambda} \in   \mbox{Dom}(T):  T^k(\overline{\lambda} )  \in   \triangle_{i_k}, 0\leq k \leq n\}.$$
This gives us

\begin{prop} We have 
$$  T(\triangle_{i_0, \ldots , i_n} ) = \triangle_{i_1, \ldots , i_n} .$$
\end{prop}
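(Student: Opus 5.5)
We prove the two inclusions separately, working directly from the definitions of the cylinders and of $\mbox{Dom}(T)$. First we record that $\mbox{Dom}(T)$ is forward invariant: if $\overline{\lambda}\in \mbox{Dom}(T)$, then $T^n(T(\overline{\lambda})) = T^{n+1}(\overline{\lambda})\notin \triangle_D$ for all $n$. Moreover $T(\overline{\lambda})\in\triangle$, since $T_0$ and $T_1$ map $\triangle_0$ and $\triangle_1$ into $\triangle$. Hence $T(\overline{\lambda})\in \mbox{Dom}(T)$. (We read the condition ``$\forall n$'' as including $n=0$, so that points of $\mbox{Dom}(T)$ lie in $\triangle_0\cup\triangle_1$.)

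For the inclusion $T(\triangle_{i_0,\ldots,i_n})\subseteq \triangle_{i_1,\ldots,i_n}$, take $\overline{\lambda}\in\triangle_{i_0,\ldots,i_n}$ and set $\overline{\mu}=T(\overline{\lambda})$. Then $\overline{\mu}\in\mbox{Dom}(T)$ by the remark above. For $0\le k\le n-1$ we have $T^k(\overline{\mu})=T^{k+1}(\overline{\lambda})\in\triangle_{i_{k+1}}$, which is exactly the defining condition for $\triangle_{i_1,\ldots,i_n}$.

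For the reverse inclusion, take $\overline{\mu}\in\triangle_{i_1,\ldots,i_n}$ and look for a preimage. Set $\overline{\lambda}=T_{i_0}^{-1}(\overline{\mu})$, using the explicit inverse matrices. The main step, and the only non-formal one, is to check that $\overline{\lambda}\in\triangle_{i_0}$, so that $T(\overline{\lambda})=T_{i_0}(\overline{\lambda})=\overline{\mu}$. The two cases are as follows.

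\textbf{Case $i_0=0$.} Here $\overline{\lambda}=(\mu_0+\mu_m,\mu_0,\mu_1,\ldots,\mu_{m-1})$.
\begin{itemize}
\item The coordinates are strictly decreasing and positive because $\overline{\mu}\in\triangle$.
\item We have $\lambda_0=\mu_0+\mu_m<\mu_0+\mu_{m-1}=\lambda_1+\lambda_m$, using $\mu_m<\mu_{m-1}$ (this needs $m\ge 2$, which holds since $m>1$).
\item Hence $\overline{\lambda}\in\triangle_0$.
\end{itemize}

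\textbf{Case $i_0=1$.} Here $\overline{\lambda}=(\mu_0+\mu_m,\mu_1,\ldots,\mu_m)$.
\begin{itemize}
\item The coordinates are decreasing since $\mu_0+\mu_m>\mu_0>\mu_1$.
\item We have $\lambda_0=\mu_0+\mu_m>\mu_1+\mu_m=\lambda_1+\lambda_m$.
\item Hence $\overline{\lambda}\in\triangle_1$.
\end{itemize}

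Finally we must check that $\overline{\lambda}\in\mbox{Dom}(T)$. We have $T^0(\overline{\lambda})=\overline{\lambda}\notin\triangle_D$, since it lies in $\triangle_{i_0}$, which is disjoint from $\triangle_D$. For $n\ge1$, $T^n(\overline{\lambda})=T^{n-1}(\overline{\mu})\notin\triangle_D$ because $\overline{\mu}\in\mbox{Dom}(T)$. Then $T^k(\overline{\lambda})\in\triangle_{i_k}$ holds for $k=0$ by the case analysis, and for $1\le k\le n$ it holds because $T^k(\overline{\lambda})=T^{k-1}(\overline{\mu})$. So $\overline{\lambda}\in\triangle_{i_0,\ldots,i_n}$ and $\overline{\mu}=T(\overline{\lambda})$, which proves surjectivity onto $\triangle_{i_1,\ldots,i_n}$.

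The degenerate case $n=0$ reads $T(\triangle_{i_0})=\mbox{Dom}(T)$, with the right-hand side interpreted as the empty-index cylinder. It is handled by the same argument with no conditions on $\overline{\mu}$ beyond membership in $\mbox{Dom}(T)$.
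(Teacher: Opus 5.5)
Your proof is correct and is exactly the routine verification the paper leaves implicit (the paper states this proposition without proof): the forward inclusion is formal, and the reverse inclusion rests on the fact, which you check correctly, that the inverse branch $T_{i_0}^{-1}$ carries $\triangle$ into $\triangle_{i_0}$. One small correction: the case $i_0=0$ does not actually need $m\ge 2$, since for $m=1$ the required inequality becomes $\mu_1<\mu_0$, which still holds.
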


Finally, note that the map $T$ is two-to-one, which is why we need to search for the natural extension.

\section{The homogeneous extended triangle map}\label{The homogeneous extended triangle map} 

\subsection{The map}
Fix an integer $m>1$.  
Set
$$\overline{\lambda} =  (\lambda_0, \ldots, \lambda_m), \overline{k} = (k_0, \ldots, k_m). $$
As before,   the $\lambda_i$ are a sequence of decreasing  positive real numbers but the $k_i$  are simply positive reals.
For any positive real $N$, set 

$$\mathcal{P}_m(N) = \{\overline{\lambda} \times \overline{k}: \overline{\lambda} \in \triangle, k_i>0, N = \sum_{i=0}^m \lambda_i k_i \} $$

Note that when $N$ is a positive integer, and when all the $\lambda_i$ and $k_j$ are positive integers, then the union of all the $\mathcal{P}_m(N)$ for all  $m\geq 1$  is the integer partition space for $N$ for more than one part.

Define
 \begin{eqnarray*}
 \tilde{\triangle}_0 &=& \{\overline{\lambda} \times \overline{k}\in \mathcal{P}_m(N) : \overline{\lambda} \in \triangle_0\} \\
  \tilde{\triangle}_1 &=& \{\overline{\lambda} \times \overline{k}\in \mathcal{P}_m(N) : \overline{\lambda} \in \triangle_1\} 
 \end{eqnarray*}

\begin{definition} The (homogeneous) extended triangle map 
$$\tilde{T}: \tilde{\triangle}_0 \cup \tilde{\triangle}_1 \rightarrow \tilde{\triangle}$$
are the maps
$$\begin{array}{c} (\lambda_0, \ldots , \lambda_m) \times [k_0,\ldots , k_m] \\ \\ \tilde{T}_0 \downarrow \\ \\ (\lambda_1, \lambda_2 \ldots , \lambda_m, \lambda_0-\lambda_1) \times [k_0+k_1,k_2, \ldots , k_m, k_0]  \end{array}$$
if $\lambda_1 + \lambda_m > \lambda_0$ (meaning $\overline{\lambda} \times \overline{k}\in \tilde{\triangle}_0$) and

$$\begin{array}{c} (\lambda_0, \ldots , \lambda_m) \times [k_0,\ldots , k_m] \\ \\ \tilde{T}_1 \downarrow \\ \\ (\lambda_0-\lambda_m, \lambda_1, \ldots , \lambda_m) \times [k_0, \ldots , k_{m-1}, k_0+ k_m]  \end{array}$$
if $\lambda_1 + \lambda_m < \lambda_0$ (meaning $\overline{\lambda} \times \overline{k}\in \tilde{\triangle}_1$) 

\end{definition}

The obvious question is why we are defining the action of $\tilde{T}_0$ and $\tilde{T}_1$ on the vector $\overline{k}$ in the way we do.
The answer  follows from that
\begin{eqnarray*}
N &=& \overline{k} \cdot \overline{\lambda}^{\top} \\
&=& \overline{k} T_i^{-1} \cdot T_i  \overline{\lambda}^{\top} 
\end{eqnarray*}
This means that each $\tilde{T}_i$ can be interpreted as the $2(m+1) \times 2(m+1)$ matrix
$$\left(  \begin{array}{cc} T_i & 0 \\ 0 & (T_i^{-1})^{\top} \end{array} \right) $$
acting on the column vector
$$ \left(  \begin{array}{c} \overline{\lambda}^{\top} \\     \overline{k}^{\top} \end{array}  \right) $$
For example, if $m=1$, we can interpret  $\tilde{T}_0$ as

 \begin{eqnarray*}\tilde{T_0} \left( \begin{array}{c} \lambda_0\\ \lambda_1  \\ k_0 \\ k_1 \end{array} \right) &=& \left( \begin{array}{cc} T_0& 0\\0 & (T_0^{-1})^{T} \end{array} \right)   \left( \begin{array}{c} \lambda_0\\  \lambda_1 \\ k_0 \\ k_1\end{array} \right) \\ &=& \left( \begin{array}{cccc} 0& 1&0&0\\1 & -1&0&0 \\ 0&0&1&1\\ 0&0& 1&0 \end{array} \right)   \left( \begin{array}{c} \lambda_0 \\\lambda_1 \\ k_0 \\ k_1\end{array} \right) \\&=&   \left( \begin{array}{c} \lambda_1\\  \lambda_0-\lambda_1 \\ k_0+k_1 \\ k_0 \end{array} \right) 
 \end{eqnarray*}
and $\tilde{T}_1$ as

 \begin{eqnarray*}\tilde{T_1} \left( \begin{array}{c} \lambda_0\\    \lambda_1   \\ k_0 \\ k_1 \end{array} \right) &=& \left( \begin{array}{cc} T_1& 0\\0 & (T_1^{-1})^{T} \end{array} \right)   \left( \begin{array}{c} \lambda_0\\    \lambda_1 \\ k_0 \\ k_1   \end{array} \right) \\ &=& \left( \begin{array}{cccc} 1& -1&0&0\\0 & 1&0&0 \\ 0&0&1&0\\ 0&0& 1&1 \end{array} \right)   \left( \begin{array}{c} \lambda_0\\\lambda_1 \\ k_0 \\ k_1\end{array} \right) \\&=&   \left( \begin{array}{c} \lambda_0-\lambda_1\\\lambda_1 \\ k_0 \\ k_0+k_1 \end{array} \right) 
 \end{eqnarray*}

 This allows us to define the domain of $\tilde{T}$:
 
 \begin{eqnarray*}
 \mbox{Domain}(\tilde{T}) &=&  \mbox{Dom}(\tilde{T}) \\
 &=& \{\overline{\lambda} \times \overline{k}\in \mathcal{P}_m(N): \forall n\in \N, \tilde{T}^n (\overline{\lambda} \times \overline{k})\not\in \triangle_D \}
 \end{eqnarray*}
 (This is the analog in the world of traditional continued fractions  of removing all rational numbers, since the continued fraction expansion of any rational number is finite.)

Now to start to understand the ranges of $\tilde{T}$.   Set
 \begin{eqnarray*}
    \tilde{\nabla}_0 &=& \{\overline{\lambda} \times \overline{k}\in \mathcal{P}_m(N) :  k_0 > k_m \} \\
     \tilde{\nabla}_1 &=& \{\overline{\lambda} \times \overline{k}\in \mathcal{P}_m(N) :  k_0 < k_m \} \\
 \tilde{\nabla}_D &=& \{\overline{\lambda} \times \overline{k}\in \mathcal{P}_m(N) :  k_0 =k_m \} \\
 \end{eqnarray*}

We have 
\begin{prop} The maps
$$\tilde{T}_0:  \tilde{\triangle}_0 \rightarrow  \tilde{\nabla}_0$$
and
$$\tilde{T}_1:  \tilde{\triangle}_1 \rightarrow  \tilde{\nabla}_1$$
are both one-to-one and onto maps.
\end{prop}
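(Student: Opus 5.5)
The plan is to write down explicit inverses and check that each map and its candidate inverse send the stated domains into the stated ranges. Each $\tilde{T}_i$ is the restriction of the invertible linear map $\mathrm{diag}(T_i,(T_i^{-1})^{\top})$ on $\R^{2(m+1)}$. Injectivity is therefore automatic, and the content is the image computation $\tilde{T}_i(\tilde{\triangle}_i)=\tilde{\nabla}_i$. I will also check that the pairing $N=\overline{k}\cdot\overline{\lambda}^{\top}$ is preserved, which holds since $\overline{k}T_i^{-1}\cdot T_i\overline{\lambda}^{\top}=\overline{k}\cdot\overline{\lambda}^{\top}$. So both sets may be viewed inside $\mathcal{P}_m(N)$, and it suffices to compare the inequalities defining them.

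\textbf{Forward inclusions.} Take $\overline{\lambda}\times\overline{k}\in\tilde{\triangle}_0$. The image parts are $(\lambda_1,\ldots,\lambda_m,\lambda_0-\lambda_1)$.
\begin{itemize}
\item They are decreasing because $\lambda_1>\cdots>\lambda_m$ and $\lambda_m>\lambda_0-\lambda_1$; the latter is exactly the $\triangle_0$ condition.
\item They are positive because $\lambda_0>\lambda_1$.
\item The new multiplicities $[k_0+k_1,k_2,\ldots,k_m,k_0]$ are positive, and their first entry exceeds their last because $k_0+k_1>k_0$. Hence the image lies in $\tilde{\nabla}_0$.
\end{itemize}
Now take a point of $\tilde{\triangle}_1$. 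The image parts are $(\lambda_0-\lambda_m,\lambda_1,\ldots,\lambda_m)$, which are decreasing because $\lambda_0-\lambda_m>\lambda_1$ is exactly the $\triangle_1$ condition. The new multiplicities $[k_0,\ldots,k_{m-1},k_0+k_m]$ have first entry less than last entry, so the image lies in $\tilde{\nabla}_1$.

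\textbf{Surjectivity.} I invert the formulas. Given $\overline{\mu}\times\overline{\ell}\in\tilde{\nabla}_0$, I set
$$\overline{\lambda}=(\mu_0+\mu_m,\mu_0,\ldots,\mu_{m-1}),\qquad \overline{k}=[\ell_m,\ell_0-\ell_m,\ell_1,\ldots,\ell_{m-1}].$$
I then check:
\begin{itemize}
\item $\overline{\lambda}$ is decreasing and positive, because $\mu_0+\mu_m>\mu_0$.
\item $\overline{k}$ is positive, because $\ell_0>\ell_m$; this is precisely where the $\tilde{\nabla}_0$ condition is used.
\item $\overline{\lambda}\in\triangle_0$, because $\lambda_1+\lambda_m=\mu_0+\mu_{m-1}>\mu_0+\mu_m=\lambda_0$.
\end{itemize}
Given $\overline{\mu}\times\overline{\ell}\in\tilde{\nabla}_1$, I set
$$\overline{\lambda}=(\mu_0+\mu_m,\mu_1,\ldots,\mu_m),\qquad \overline{k}=[\ell_0,\ldots,\ell_{m-1},\ell_m-\ell_0].$$
Here $\overline{k}$ is positive because $\ell_m>\ell_0$. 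Also $\lambda_0=\mu_0+\mu_m>\mu_1+\mu_m=\lambda_1+\lambda_m$, so $\overline{\lambda}\in\triangle_1$, and this $\overline{\lambda}$ is decreasing.

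In both cases, applying $\tilde{T}_i$ to $\overline{\lambda}\times\overline{k}$ returns $\overline{\mu}\times\overline{\ell}$. These are the matrices $\mathrm{diag}(T_i^{-1},T_i^{\top})$, using the formulas for $T_i^{-1}$ already displayed.

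\textbf{Main obstacle.} The real content is the sign bookkeeping: I must confirm that the $\triangle_i$ condition is used only in the forward direction and the $\tilde{\nabla}_i$ condition only in the reverse direction. Note that $\tilde{\nabla}_i$ as defined imposes no condition on the parts beyond $\overline{\lambda}\in\triangle$, so surjectivity must produce a preimage in $\tilde{\triangle}_i$ from any such point. This works because the reverse parts automatically satisfy the needed $\triangle_i$ inequality, as computed above. If the $\mathrm{Dom}(\tilde{T})$ restriction is intended, it is preserved in both directions since the maps commute with iteration. Boundary cases such as $k_0=k_m$ or $\lambda_0=\lambda_1+\lambda_m$ are excluded by the strict inequalities.
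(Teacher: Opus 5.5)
Your proof is correct and takes the same route as the paper. The paper calls this proposition "a calculation" and then writes down exactly the inverse maps $\tilde{T}_0^{-1}$ and $\tilde{T}_1^{-1}$ that you construct. Your checks of the forward inclusions, and that the preimages land in $\tilde{\triangle}_i$, supply the details the paper leaves out.
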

The proof is a calculation.

\subsection{The inverse of the homogeneous extended triangle map}
Here we simply write down the inverse maps for $ \tilde{T}_0$ and $\tilde{T}_1$.

The map 
$$\tilde{T}_0^{-1}:  \tilde{\nabla}_0 \rightarrow  \tilde{\triangle}_0$$
is 
$$\begin{array}{c} (\overline{\lambda} ) \times [\overline{k}] \\
\\
\downarrow  \tilde{T}_0^{-1} \\ \\
  (\lambda_0+\lambda_m, \lambda_0, \lambda_1, \ldots, \lambda_{m-1}) \times [k_m, k_0-k_m, k_1, k_2,\ldots , k_{m-1}]
  \end{array}$$
and the 
map 
$$\tilde{T}_1^{-1}:  \tilde{\nabla}_1 \rightarrow  \tilde{\triangle}_1$$
is 
$$\begin{array}{c} (\overline{\lambda} ) \times [\overline{k}] \\
\\
\downarrow  \tilde{T}_1^{-1} \\ \\
  (\lambda_0+\lambda_m, \lambda_1, \lambda_2, \ldots, \lambda_{m}) \times [k_0, k_1, k_2,\ldots , k_{m-1}, k_m-k_0].
  \end{array}$$

\subsection{The cylinders}\label{homog cylinders}

Given two sequences $(i_0, \ldots, i_{n_1})$ and $[j_0, \ldots, j_{n_2}]$, each consisting of ones and zeros, then define the corresponding  cylinder

$$\tilde{\triangle}_{ (i_0, \ldots, i_{n_1})\times [j_0, \ldots, j_{n_2}]   }  $$
to be all $\overline{\lambda} \times \overline{k} \in \tilde{\triangle}$ such that 
$$   \tilde{T}^k(\overline{\lambda} \times \overline{k}) \in \tilde{\triangle}_{i_k},   \tilde{T}^{-l}(\overline{\lambda} \times \overline{k}) \in \tilde{\nabla}_{j_l},$$
for any  integers $k$ and $l$ with $0\leq k \leq n_1$ and any $0\leq l \leq n_2.$

We have 

\begin{prop}\label{homog cylinders}   For all integers  $k$ with $0\leq k \leq n_1$, we have 
$$T^{k}\tilde{\triangle}_{  (i_0, \ldots, i_{n_1})\times (j_0, \ldots, j_{n_2})   } =   \tilde{\triangle}_{ (i_{k+1},  \ldots, i_{n_1}) \times  (i_k, i_{k-1}, \ldots, i_{k_0}, j_0, \ldots, j_{n_2} ) }  $$
\end{prop}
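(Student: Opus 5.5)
The plan is to argue by induction on $k$, so the whole proof reduces to the case $k=1$. The statement has a typo: the last index $i_{k_0}$ should read $i_0$, and $T$ means $\tilde{T}$. So for $k=1$ we want
$$\tilde{T}\,\tilde{\triangle}_{(i_0,\ldots,i_{n_1})\times[j_0,\ldots,j_{n_2}]}=\tilde{\triangle}_{(i_1,\ldots,i_{n_1})\times[i_0,j_0,\ldots,j_{n_2}]}.$$
Once this is known, applying it to the cylinder on the right with $(i_1,\ldots,i_{n_1})$ in place of $(i_0,\ldots,i_{n_1})$ and $[i_0,j_0,\ldots]$ in place of $[j_0,\ldots]$ gives the case $k+1$ from the case $k$. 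Throughout we work inside $\mathrm{Dom}(\tilde{T})$, where $\tilde{T}$ is a bijection with inverse given piecewise by $\tilde{T}_0^{-1}$ on $\tilde{\nabla}_0$ and $\tilde{T}_1^{-1}$ on $\tilde{\nabla}_1$. This uses the Proposition that $\tilde{T}_i:\tilde{\triangle}_i\to\tilde{\nabla}_i$ is one-to-one and onto, together with the fact that $\tilde{\nabla}_0$ and $\tilde{\nabla}_1$ are disjoint.

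For the inclusion $\subseteq$, take $x$ in the left cylinder and set $y=\tilde{T}x$. The forward conditions shift: $\tilde{T}^{a}y=\tilde{T}^{a+1}x\in\tilde{\triangle}_{i_{a+1}}$ for $0\le a\le n_1-1$. For the backward conditions, $x\in\tilde{\triangle}_{i_0}$, so $y=\tilde{T}_{i_0}x\in\tilde{\nabla}_{i_0}$, which accounts for the new first entry $i_0$. Moreover $\tilde{T}^{-1}y=x$, so $\tilde{T}^{-(l+1)}y=\tilde{T}^{-l}x\in\tilde{\nabla}_{j_l}$. Here one should read the backward condition at step $l$ as ``$\tilde{T}^{-l}$ lies in $\tilde{\nabla}_{j_l}$'' with $\tilde{T}^{0}=\mathrm{id}$. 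With this reading the indices line up exactly: position $0$ of the new backward word is $i_0$, and position $l+1$ is $j_l$.

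For the inclusion $\supseteq$, take $y$ in the right cylinder. Since $y\in\tilde{\nabla}_{i_0}$, the Proposition gives a unique $x=\tilde{T}_{i_0}^{-1}y\in\tilde{\triangle}_{i_0}$ with $\tilde{T}x=y$. Reversing the index bookkeeping above then shows that $x$ satisfies all the conditions of the left cylinder. We also need $x\in\mathrm{Dom}(\tilde{T})$. This holds because $x\notin\triangle_D$ (it lies in $\tilde{\triangle}_{i_0}$) and its forward orbit after one step is that of $y$.

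The main obstacle is not computational but definitional. We must pin down the indexing convention for the backward word $[j_0,\ldots,j_{n_2}]$: whether $l$ starts at $0$ or $1$, and the fact that the $\tilde{\nabla}$ condition at $\tilde{T}^{-l}$ records the symbol $i$ of the step that produced that point. We must also make sure $\tilde{T}^{-l}$ is well defined, meaning each backward iterate lies in $\tilde{\nabla}_0\cup\tilde{\nabla}_1$; this is guaranteed by membership in the cylinder. With the convention fixed as above, the statement follows from the bijectivity Proposition alone, with no matrix calculation beyond what that Proposition already contains.
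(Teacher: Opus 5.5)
Your one-step argument is correct. The paper gives no proof of this proposition; it states it and uses it, relying implicitly on the facts you use: the bijections $\tilde{T}_i:\tilde{\triangle}_i\to\tilde{\nabla}_i$, the disjointness of the pieces, and the convention that the backward condition at $l=0$ constrains the point itself. So your route is the intended one.

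What needs fixing is an unflagged re-indexing. After only your two corrections ($i_{k_0}\to i_0$, $T\to\tilde{T}$), the displayed formula at $k=1$ asserts
$$\tilde{T}\,\tilde{\triangle}_{(i_0,\ldots,i_{n_1})\times[j_0,\ldots,j_{n_2}]}=\tilde{\triangle}_{(i_2,\ldots,i_{n_1})\times[i_1,i_0,j_0,\ldots,j_{n_2}]}.$$
That is not the identity you write after ``So for $k=1$ we want''. Moreover, under your (correct) convention, which the paper also uses when it identifies Region I with $\tilde{\triangle}_{(0)\times[1]}$, this identity is false:

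\begin{itemize}
\item The left side lies in $\tilde{T}(\tilde{\triangle}_{i_0})=\tilde{\nabla}_{i_0}$.
\item The right side lies in $\tilde{\nabla}_{i_1}$.
\end{itemize}

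So for $i_0\neq i_1$ the two sides are disjoint, although the left side is nonempty whenever the cylinder is. The same problem occurs at $k=0$: the cylinder lies in $\tilde{\nabla}_{j_0}$, while the displayed right side lies in $\tilde{\nabla}_{i_0}$.

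What your induction actually proves is
$$\tilde{T}^{k}\,\tilde{\triangle}_{(i_0,\ldots,i_{n_1})\times[j_0,\ldots,j_{n_2}]}=\tilde{\triangle}_{(i_k,\ldots,i_{n_1})\times[i_{k-1},\ldots,i_0,j_0,\ldots,j_{n_2}]},\qquad 0\le k\le n_1.$$
In other words, the paper's right-hand side equals $\tilde{T}^{k+1}$ of the cylinder, not $\tilde{T}^{k}$. State this explicitly as a third correction to the statement, and start the induction at the trivial case $k=0$. With that change, your proof is complete.
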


This will allow us to claim:
\begin{theorem}\label{homog natural exension} The map $\tilde{T}$ is the natural extension of the map $T$.

\end{theorem}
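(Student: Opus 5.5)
We verify the defining properties of a natural extension from Section \ref{basics of natural extension}, taking as $X$ the domain $\mathrm{Dom}(\tilde{T})\subset \mathcal{P}_m(N)$, as $\psi$ the projection $\psi(\overline{\lambda}\times\overline{k})=\overline{\lambda}$, and as $Y=\mathrm{Dom}(T)$. The invariant measure on $X$ is Lebesgue measure restricted to the level set $\mathcal{P}_m(N)$; its pushforward under $\psi$ serves as the measure on $Y$.

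First, commutativity $\psi\circ\tilde{T}=T\circ\psi$ holds by inspection, since on $\tilde{\triangle}_i$ the first factor of $\tilde{T}_i$ is exactly $T_i$. Surjectivity of $\psi$ follows because for any $\overline{\lambda}\in\triangle$ one can choose $k_i>0$ with $\sum k_i\lambda_i=N$. Injectivity of $\tilde{T}$ comes from the earlier proposition that $\tilde{T}_0:\tilde{\triangle}_0\to\tilde{\nabla}_0$ and $\tilde{T}_1:\tilde{\triangle}_1\to\tilde{\nabla}_1$ are bijections, together with $\tilde{\nabla}_0\cap\tilde{\nabla}_1=\emptyset$. Since $\tilde{\nabla}_0\cup\tilde{\nabla}_1$ is all of $\mathcal{P}_m(N)$ up to the null set $\tilde{\nabla}_D$, $\tilde{T}$ is a.e.\ invertible, with the explicit inverses $\tilde{T}_0^{-1},\tilde{T}_1^{-1}$ written above. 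Measure preservation holds because each $\tilde{T}_i$ is the block matrix $\mathrm{diag}(T_i,(T_i^{-1})^\top)$. This matrix has determinant $\pm1$ and preserves the bilinear form $\overline{k}\cdot\overline{\lambda}^\top$, so it maps level sets of $N$ to level sets. Hence it preserves Lebesgue measure on the cone $\{\overline{\lambda}\in\triangle,k_i>0\}$, and by coarea the disintegrated measure on each $\mathcal{P}_m(N)$. One has to handle the normalization: either work with the full homogeneous cone, or note that the conditional measures on the level sets are preserved since $\tilde{T}$ commutes with the function $N$.

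The main step is the generating (minimality) condition: the $\sigma$-algebra generated by $\tilde{T}^{k}\psi^{-1}(\mathcal{G})$, $k\ge0$, must be the full Borel $\sigma$-algebra on $X$. Equivalently, the sets $\tilde{T}^{n}\psi^{-1}(B)$ must separate points of $X$ modulo null sets. Here I would use Proposition \ref{homog cylinders}. The set $\tilde{T}^{n}\psi^{-1}(\triangle_{i_0,\dots,i_{n}})$ is the cylinder with empty future part and past part $[i_n,\dots,i_0]$. So, after intersecting with $\psi^{-1}$ of future cylinders, the two-sided cylinders $\tilde{\triangle}_{(i_0,\dots)\times[j_0,\dots]}$ all lie in this $\sigma$-algebra.

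It then remains to show that two-sided cylinders shrink to points. The $\overline{\lambda}$-coordinate is determined by the forward cylinders up to the question of whether $T$-cylinders shrink to rays; on the level set $\mathcal{P}_m(N)$ that suffices once $\overline{k}$ is fixed. This is the expected obstacle, and I would rely on the known ergodicity/convergence results for the triangle map \cite{Garrity-Lehmann Duke}, which give a.e.\ shrinking of the $T$-cylinders to rays. Symmetrically, the backward symbol sequence determines $\overline{k}$ up to scaling, since the past cylinders are images under products of the matrices $(T_i^{-1})^\top$ acting on the $k$-cone. Shrinking of those cones is the dual statement, and I would try to deduce it by the Young-conjugation duality that exchanges the roles of $\overline{\lambda}$ and $\overline{k}$. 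The constraint $\overline{k}\cdot\overline{\lambda}^\top=N$ then pins down the scales.

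Finally, for property 4 (universality), I would appeal to Rohlin's theorem \cite{Dajani-Kalle}: any invertible extension satisfying the generating condition is isomorphic to the abstract shift model $(y_0,y_1,\dots)$. The isomorphism is $\overline{\lambda}\times\overline{k}\mapsto(\overline{\lambda},\psi\tilde{T}^{-1}(\cdot),\psi\tilde{T}^{-2}(\cdot),\dots)$, which is injective a.e.\ by the cylinder-shrinking step.
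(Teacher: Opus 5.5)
\textbf{Gap in the generating-condition step.} Your outline follows the paper's route: the only substantive point is the generating condition, to be obtained from the cylinder proposition. Your checks of injectivity, of $\psi\circ\tilde{T}=T\circ\psi$, and of measure preservation (via $\det\mathrm{diag}(T_i,(T_i^{-1})^\top)=1$ and invariance of $\overline{k}\cdot\overline{\lambda}^\top$) are fine.

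The step that fails is the reduction to ``two-sided cylinders shrink to points,'' closed off by ``the constraint $\overline{k}\cdot\overline{\lambda}^\top=N$ then pins down the scales.'' Every condition defining a cylinder is either a condition on the ray of $\overline{\lambda}$ (forward symbols) or a condition on the ray of $\overline{k}$ (past symbols). For the past symbols, $\tilde{T}^{-1}$ acts on $\overline{k}$ by $\overline{k}\mapsto\overline{k}T_j$, with $j$ decided by $k_0$ versus $k_m$ alone, independently of $\overline{\lambda}$.

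Now consider the Teichm\"uller flow $(\overline{\lambda},\overline{k})\mapsto(e^{s}\overline{\lambda},e^{-s}\overline{k})$. It preserves $\mathcal{P}_m(N)$, commutes with $\tilde{T}$, and preserves each $\tilde{\triangle}_i$ and $\tilde{\nabla}_j$. Hence every two-sided cylinder is a union of flow lines, and cylinders never shrink to points. Knowing both rays together with $\overline{k}\cdot\overline{\lambda}^\top=N$ fixes only the product of the two scales. Your phrase ``once $\overline{k}$ is fixed'' is circular: each scale is recovered only from the exact other vector, and cylinders never provide that.

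\textbf{The repair.} It is easy and actually shortens your argument. Condition 2 uses $\tilde{T}^{k}\psi^{-1}(\mathcal{G})$ with $\mathcal{G}$ the full Borel $\sigma$-algebra of $Y$. So $\overline{\lambda}$ itself, scale included, is already measurable, and you need no forward shrinking at all.

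What you do need is that the past itinerary $[j_0,j_1,\dots]$, which the sets $\tilde{T}^{l}\psi^{-1}(\mathcal{G})$ detect, determines the ray of $\overline{k}$ for almost every $\overline{k}$. Then $\overline{k}\cdot\overline{\lambda}^\top=N$, with the exact $\overline{\lambda}$, fixes $\overline{k}$.

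Your Young-conjugation idea proves exactly this. The identity $T_0\circ\mathcal{Y}\circ T_0=\mathcal{Y}$, its $T_1$ analogue, and the fact that $\mathcal{Y}$ carries $\tilde{\nabla}_j$ onto $\tilde{\triangle}_j$ together give $\mathcal{Y}\circ\tilde{T}=\tilde{T}^{-1}\circ\mathcal{Y}$. Consequently the past itinerary of $\overline{\lambda}\times\overline{k}$ equals the forward $T$-itinerary of $(k_0+\cdots+k_m,\dots,k_0)$, which is an invertible linear image of $\overline{k}$. The a.e. convergence of the triangle map, which you already planned to import, then finishes the step.

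\textbf{Comparison with the paper, and one caveat.} With this change your proof is the paper's argument with the separation step actually supplied. The paper simply asserts that condition 2 follows from the cylinder proposition, leaving this convergence input implicit. Also note that your measure is infinite: its $\psi$-pushforward is a constant multiple of $d\lambda_0\cdots d\lambda_m/(\lambda_0\cdots\lambda_m)$ on the whole cone. So make sure the version of Rohlin's theorem you cite for property 4 covers $\sigma$-finite measures.
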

  In the four needed conditions for the natural extension given in Section \ref{basics of natural extension}, conditions 1, 2 and 4 are what we have been discussing, and condition 2 follows from Proposition \ref{homog cylinders}.  The shortness of this proof reflects that the heart of the argument is finding the natural extension, not the proof.  

\section{The Affine Projection}\label{affine projection}

\subsection{The Teichm\"{u}ller flow}
On the space $\mathcal{P}_m(N)$ there is the Teichm\"{u}ller flow
$$\mbox{Teich}_{s}:  \mathcal{P}_m(N) \rightarrow   \mathcal{P}_m(N) $$
defined by 
$$ \mbox{Teich}_s (\overline{\lambda}, \overline{k}) = (e^s \overline{\lambda}, e^{-s}\overline{k}) $$
(Though not relevant to this paper, this map is critical in the study of translation surfaces, which is one of the  hidden  motivations for this paper.  See Athreya and Masur \cite{Athreya-Masur}.)

We  choose $s$ so that $e^s = 1/\lambda_0.$  For this fixed $s$, we define
\begin{eqnarray*}
\mbox{Teich} (\overline{\lambda} \times \overline{k})  &=&  ((1/\lambda_0) \overline{\lambda}\times  \lambda \overline{k}) \\
&=& \left( 1, \frac{\lambda_1}{\lambda_0}, \ldots,  \frac{\lambda_m}{\lambda_0} \right) \times \left[ \lambda_0 k_0, \lambda_0 k_1, \ldots, \lambda_0 k_m\right].
\end{eqnarray*}
The Teichm\"{u}ller flow will be what we use to get next subsection's affine projections.

\subsection{The map}

The triangle map $T$ and its natural extension $\tilde{T}$ depend not so much on the  vector $\overline{\lambda} = (\lambda_0, \ldots, \lambda_m)$ but actually on the various ratios of  $\lambda_i/\lambda_0.$ 
This suggests that we should be thinking of $\overline{\lambda} $ as a point in projective space.  As $\lambda_0\neq 0$, this in turn suggests the map
$$(\lambda_0, \ldots, \lambda_m) \rightarrow  \left(\frac{\lambda_1}{\lambda_0}, \ldots,  \frac{\lambda_m}{\lambda_0} \right),$$
which can be interpreted as part of a Teichm\"{u}ller flow.

In turn, this leads to the affine projection map
$$\pi:\mathcal{P}_m (N) \rightarrow \R^m \times \R^m$$
defined as
$$\pi (\overline{\lambda} \times \overline{k}) 
=\left( \frac{\lambda_1}{\lambda_0}, \ldots,  \frac{\lambda_m}{\lambda_0} \right) \times \left[  \lambda_0 k_1, \ldots, \lambda_0 k_m\right].$$

Set 
$$ \tilde{\triangle}_m^A(N) = \{ (t_1, \ldots, t_m)\times [u_1, \ldots, u_m]\in \R^m\times \R^m: t_1 > \cdots > t_m>0, u_i>0,  N- \sum_{i=1}^m t_i u_i >0\}.$$
This is the affine version of $\mathcal{P}_m(N)$, as seen in the following proposition:

\begin{prop} We have that the affine projection $\pi$ maps $\mathcal{P}_m (N) $ onto the space $ \tilde{\triangle}_m^A(N) $.
\end{prop}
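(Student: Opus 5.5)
The plan is to check two things directly from the definitions: first that $\pi$ lands in $\tilde{\triangle}_m^A(N)$, and then that every point of $\tilde{\triangle}_m^A(N)$ has a preimage, which we will write down explicitly.

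\emph{Image.} Take $\overline{\lambda}\times\overline{k}\in\mathcal{P}_m(N)$ and put $t_i=\lambda_i/\lambda_0$ and $u_i=\lambda_0k_i$ for $1\le i\le m$.
\begin{itemize}
\item Since $\lambda_0>\lambda_1>\cdots>\lambda_m>0$, dividing by $\lambda_0>0$ gives $1>t_1>\cdots>t_m>0$.
\item Since $k_i>0$ and $\lambda_0>0$, each $u_i>0$.
\item We have $t_iu_i=\lambda_ik_i$, so
$$N-\sum_{i=1}^m t_iu_i=N-\sum_{i=1}^m\lambda_ik_i=\lambda_0k_0>0.$$
\end{itemize}
Hence $\pi(\overline{\lambda}\times\overline{k})\in\tilde{\triangle}_m^A(N)$.

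\emph{Surjectivity.} Given $(t_1,\ldots,t_m)\times[u_1,\ldots,u_m]\in\tilde{\triangle}_m^A(N)$, we look for a preimage. Set $\lambda_0=1$, $\lambda_i=t_i$ and $k_i=u_i$ for $1\le i\le m$, and
$$k_0=N-\sum_{i=1}^m t_iu_i,$$
which is positive by the defining inequality of $\tilde{\triangle}_m^A(N)$. Then $\sum_{i=0}^m\lambda_ik_i=N$ and $\pi$ returns the given point. More generally, for any $\lambda_0>0$ one can take $\lambda_i=\lambda_0t_i$, $k_i=u_i/\lambda_0$, and $k_0=\bigl(N-\sum t_iu_i\bigr)/\lambda_0$. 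So the fibres of $\pi$ are the orbits of the Teichm\"{u}ller flow, which fits the motivation above.

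\emph{Main obstacle.} The one real point is that $\overline{\lambda}\in\triangle$ also requires $\lambda_0>\lambda_1$, that is $t_1<1$. This condition does not appear in the stated definition of $\tilde{\triangle}_m^A(N)$, so a point with $t_1\ge 1$ is not in the image of $\pi$.
\begin{itemize}
\item I would read the definition as implicitly including $1>t_1$ (the affine chart of $\triangle$), and then the argument above goes through.
\item Without that condition, the statement is only true with "onto" replaced by "onto the subset where $t_1<1$".
\end{itemize}
I would state this correction explicitly. Apart from it, the verification is immediate.
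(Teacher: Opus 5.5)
Your proof is correct and follows the paper's argument: the paper also builds the preimage with $\lambda_0=1$, $\lambda_i=t_i$, $k_i=u_i$ and $k_0=N-\sum_{i=1}^m t_iu_i$, and it leaves implicit both the check that $\pi$ lands in $\tilde{\triangle}_m^A(N)$ and the condition $\lambda_0>\lambda_1$. Your point that the definition of $\tilde{\triangle}_m^A(N)$ must include $1>t_1$ for surjectivity is right. The paper's proof silently needs it, and the paper's later definitions (the $m=1$ domain and $\triangle_m^A$ in the invariant-measure section) do include it.
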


\begin{proof} 

The affine projection map is the composition of 
$$\begin{matrix}
(\lambda_0, \ldots, \lambda_m) \times [k_0, \ldots, k_m]\\
\downarrow\\
 \left( 1, \frac{\lambda_1}{\lambda_0}, \ldots,  \frac{\lambda_m}{\lambda_0} \right) \times \left[ \lambda_0 k_0, \lambda_0 k_1, \ldots, \lambda_0 k_m\right]\\
 \downarrow \\
 \left( \frac{\lambda_1}{\lambda_0}, \ldots,  \frac{\lambda_m}{\lambda_0} \right) \times \left[  \lambda_0 k_1, \ldots, \lambda_0 k_m\right]
\end{matrix}$$
Given an element $$(t_1, \ldots, t_m)\times [u_1, \ldots, u_m] \in \triangle_m^A(N)$$
we can set 
$$\lambda_0=1, \lambda_1=t_1, \ldots, \lambda_m=t_m, k_1=u_1,\ldots, k_m= u_m.$$
The information that is missing is the value of $k_0$.  But we need
$$\lambda_0 k_0 + \lambda_1k_1 + \ldots + \lambda_mk_m = N,$$
meaning that we need
$$k_0 = N - \sum_{i=1}^m \lambda_i k_i  = N - \sum_{i=1}^m t_i u_i . $$
This we can do if the above is positive, which explains why we defined $ \tilde{\triangle}_m^A(N) $ as we did.
 \end{proof}

This will allow us to find the nonhomogeneous extended triangle map, which we will also call the affine extended triangle map.
The non-homogeneous triangle map has two subdomains:

\begin{eqnarray*}
\tilde{\triangle}_0^A &=& \{  \overline{t}  \times \overline{u}     \in \tilde{\triangle} : 1<t_1+t_m\}  \\
\tilde{\triangle}_1^A &=& \{ \overline{t}  \times \overline{u}   \in \tilde{\triangle} : 1>t_1+t_m\}  \\
\end{eqnarray*}
In both cases we are suppressing a subscript of an $m$.

Via a calculation, we have 
\begin{prop} The affine projection map $\pi$ maps $\tilde{\triangle}_0$ onto $  \tilde{\triangle}_0^A$ and maps  $\tilde{\triangle}_1$ onto $  \tilde{\triangle}_1^A.$
\end{prop}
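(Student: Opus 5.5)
The plan is to reduce everything to the previous proposition, which says that $\pi$ maps $\mathcal{P}_m(N)$ onto $\tilde{\triangle}_m^A(N)$, and then check that the defining inequalities of $\tilde{\triangle}_0$ and $\tilde{\triangle}_1$ transform into those of $\tilde{\triangle}_0^A$ and $\tilde{\triangle}_1^A$. The key observation is that the conditions defining $\triangle_0$ and $\triangle_1$ are homogeneous of degree one in $\overline{\lambda}$ and do not involve $\overline{k}$. Since $\lambda_0>0$, we may divide by it without changing the direction of an inequality.

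For the inclusion, take $\overline{\lambda}\times\overline{k}\in\tilde{\triangle}_0$, so that $\lambda_0<\lambda_1+\lambda_m$. Write $\pi(\overline{\lambda}\times\overline{k})=\overline{t}\times\overline{u}$ with $t_i=\lambda_i/\lambda_0$ and $u_i=\lambda_0k_i$. Dividing the inequality by $\lambda_0$ gives $1<t_1+t_m$. By the previous proposition, $\overline{t}\times\overline{u}$ lies in $\tilde{\triangle}_m^A(N)$. (Here I read the ``$\tilde{\triangle}$'' in the definition of $\tilde{\triangle}^A_i$ as $\tilde{\triangle}_m^A(N)$.) Directly, $t_1>\cdots>t_m>0$, $u_i>0$, and $N-\sum_{i\ge 1} t_iu_i=\lambda_0k_0>0$. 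Hence $\pi(\tilde{\triangle}_0)\subseteq\tilde{\triangle}_0^A$. The argument for $\tilde{\triangle}_1$ is identical with the inequality reversed.

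For surjectivity, take $\overline{t}\times\overline{u}\in\tilde{\triangle}_0^A$. Use the explicit preimage from the proof of the previous proposition: $\lambda_0=1$, $\lambda_i=t_i$, $k_i=u_i$ for $i\ge1$, and $k_0=N-\sum t_iu_i>0$. This point lies in $\mathcal{P}_m(N)$ and maps to $\overline{t}\times\overline{u}$. It remains to check the cone condition: $1<t_1+t_m$ means exactly $\lambda_0<\lambda_1+\lambda_m$, so the point lies in $\triangle_0$ and hence in $\tilde{\triangle}_0$. The same construction handles $\tilde{\triangle}_1^A$.

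There is no real obstacle. The only point needing care is that, although $\pi$ discards $k_0$, positivity of $k_0$ is exactly the condition $N-\sum t_iu_i>0$, and it must be verified in both directions; this is already done in the previous proposition. I would also note that the same argument shows $\pi$ sends the boundary piece $\tilde{\triangle}_D$ onto the set where $t_1+t_m=1$, so the decomposition is respected on the nose.
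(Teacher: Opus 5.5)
Your approach is the same direct calculation the paper has in mind; the paper only says ``via a calculation.'' You divide the cone inequality by $\lambda_0>0$, and you lift via $\lambda_0=1$, $k_0=N-\sum t_iu_i$. There is, however, one step that fails as written, so this is a gap (easily patched, but real).

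\textbf{The gap.} In the surjectivity part you assert that $(1,t_1,\ldots,t_m)\times[k_0,u_1,\ldots,u_m]$ lies in $\mathcal{P}_m(N)$. That requires $(1,t_1,\ldots,t_m)\in\triangle$, i.e.\ $1>t_1$. The definition of $\tilde{\triangle}_m^A(N)$ you are using only imposes $t_1>\cdots>t_m>0$.
\begin{itemize}
\item For $\tilde{\triangle}_1^A$ this is harmless: $t_1+t_m<1$ and $t_m>0$ together force $t_1<1$.
\item For $\tilde{\triangle}_0^A$ it is not: $1<t_1+t_m$ allows $t_1\geq 1$. Take $m=2$, $\overline{t}=(2,1)$, and $u_1,u_2>0$ small enough that $N-2u_1-u_2>0$. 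This point satisfies every condition you listed, but it has no preimage at all, since every point of $\pi(\mathcal{P}_m(N))$ has $t_1=\lambda_1/\lambda_0<1$.
\end{itemize}
So under the literal definitions the $\tilde{\triangle}_0$ half of the statement is false. Appealing to the previous proposition does not rescue the step, because that proposition and its proof have the same omission.

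\textbf{The fix.} Build $1>t_1$ into $\tilde{\triangle}_m^A(N)$. This is clearly intended: compare the $m=1$ description, which has $1\geq t$, and the simplex $\triangle_m^A=\{1>t_1>\cdots>t_m>0\}$ in the invariant-measure section. Then verify $t_1<1$ in the inclusion direction (it is $\lambda_1<\lambda_0$ divided by $\lambda_0$), and use it in the lifting direction to guarantee $(1,t_1,\ldots,t_m)\in\triangle$.

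With that correction your proof is complete. Your side remark that $\tilde{\triangle}_D$ maps onto $\{t_1+t_m=1\}$ is also correct, since there $t_1<1$ is automatic.
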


The natural extended non-homogeneous triangle map is defined via two maps
$$\tilde{T}_0^A:\tilde{\triangle}_0^A \rightarrow \tilde{\triangle}^A, \; \tilde{T}_1:\tilde{\triangle}_1^A \rightarrow \tilde{\triangle}^A,$$
where we set 
$$\begin{array}{c}
(t_1, \ldots, t_m) \times  [u_1, \ldots, u_m)] \\ \\
\downarrow \tilde{T}_0^A \\ \\
\left(   \frac{t_2}{t_1}, \ldots, \frac{t_m}{t_1}, \frac{1-t_1}{t_1}        \right) \times \left[  t_1 u_2, \ldots, t_1u_m, t_1 \left( N- \sum_{i=1}^m t_i u_i \right)   \right] \\
\end{array}$$
and 
$$\begin{array}{c}
(t_1, \ldots, t_m) \times  [u_1, \ldots, u_m)] \\ \\
\downarrow \tilde{T}_1^A \\ \\
\left(  \frac{t_1}{1-t_m}, \ldots,  \frac{t_m}{1-t_m}  \right) \times \left[  (1-t_m) u_1, \ldots, (1-t_m) u_{m-1} , (1-t_m)  \left( N+ u_m- \sum_{i=1}^m t_i u_i \right)   \right]
\end{array}$$

The justification for defining these maps in the way we did is in:

\begin{prop} The following two diagrams are commutative:
$$\begin{matrix} 
\tilde{\triangle}_0 & \xrightarrow{\tilde{T}_0} & \tilde{\triangle} \\
\pi \downarrow & & \downarrow \pi \\
\tilde{\triangle}_0^A & \xrightarrow{\tilde{T}_0^A} & \tilde{\triangle}^A 
\end{matrix}$$
and
$$\begin{matrix} 
\tilde{\triangle}_1 & \xrightarrow{\tilde{T}_1} & \tilde{\triangle} \\
\pi \downarrow & & \downarrow \pi \\
\tilde{\triangle}_1^A & \xrightarrow{\tilde{T}_1^A} & \tilde{\triangle}^A 
\end{matrix}$$
\end{prop}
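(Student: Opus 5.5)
The plan is to verify both squares by direct computation, using the single identity that links $\pi$ to the hidden coordinate $k_0$. First I will note that $\pi$ forgets only $\lambda_0$ and $k_0$. Given $\overline{\lambda}\times\overline{k}\in\mathcal{P}_m(N)$, write $(t_1,\ldots,t_m)\times[u_1,\ldots,u_m]=\pi(\overline{\lambda}\times\overline{k})$, so that $t_i=\lambda_i/\lambda_0$ and $u_i=\lambda_0k_i$. The constraint $\sum_{i=0}^m\lambda_ik_i=N$ then becomes
$$\lambda_0k_0 \;=\; N-\sum_{i=1}^m t_iu_i .$$
This is the same identity used in the proof that $\pi$ maps $\mathcal{P}_m(N)$ onto $\tilde{\triangle}_m^A(N)$. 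Every quantity arising from $\tilde{T}_i$ will be rewritten through it.

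For the first square, take $\overline{\lambda}\times\overline{k}\in\tilde{\triangle}_0$. Then $\tilde{T}_0$ gives
$$(\lambda_1,\ldots,\lambda_m,\lambda_0-\lambda_1)\times[k_0+k_1,k_2,\ldots,k_m,k_0].$$
Applying $\pi$ means dividing the parts by the new leading part $\lambda_1$ and multiplying $k_1',\ldots,k_m'=k_2,\ldots,k_m,k_0$ by $\lambda_1$. This produces three kinds of coordinates:
\begin{itemize}
\item the parts $\lambda_{i+1}/\lambda_1=t_{i+1}/t_1$;
\item the last part $(\lambda_0-\lambda_1)/\lambda_1=(1-t_1)/t_1$;
\item the multiplicities $\lambda_1k_{i+1}=t_1u_{i+1}$, together with $\lambda_1k_0=t_1\lambda_0k_0=t_1\bigl(N-\sum t_iu_i\bigr)$.
\end{itemize}
These agree exactly with $\tilde{T}_0^A(\pi(\overline{\lambda}\times\overline{k}))$. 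The condition $\lambda_1+\lambda_m>\lambda_0$ becomes $t_1+t_m>1$ after dividing by $\lambda_0$, so $\pi(\tilde{\triangle}_0)=\tilde{\triangle}_0^A$ (the earlier proposition) and the domains match.

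For the second square, take $\overline{\lambda}\times\overline{k}\in\tilde{\triangle}_1$. Then $\tilde{T}_1$ gives
$$(\lambda_0-\lambda_m,\lambda_1,\ldots,\lambda_m)\times[k_0,\ldots,k_{m-1},k_0+k_m].$$
Applying $\pi$ gives the following coordinates:
\begin{itemize}
\item the parts $\lambda_i/(\lambda_0-\lambda_m)=t_i/(1-t_m)$;
\item for $i<m$, the multiplicities $(\lambda_0-\lambda_m)k_i=(1-t_m)u_i$;
\item the last multiplicity $(\lambda_0-\lambda_m)(k_0+k_m)=(1-t_m)(\lambda_0k_0+\lambda_0k_m)=(1-t_m)\bigl(N+u_m-\sum t_iu_i\bigr)$.
\end{itemize}
This is precisely the formula for $\tilde{T}_1^A$.

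Finally, I will remark why the $\tilde{T}_i^A$ are well defined as maps on $\tilde{\triangle}^A$ and land in $\tilde{\triangle}^A$.
\begin{itemize}
\item The fibres of $\pi$ are the orbits of the scaling $(\overline{\lambda},\overline{k})\mapsto(c\overline{\lambda},c^{-1}\overline{k})$, which is the Teichm\"uller flow. Each $\tilde{T}_i$ is block-linear, so it commutes with this scaling.
\item Each $\tilde{T}_i$ maps into $\mathcal{P}_m(N)$. Since $\pi$ maps $\mathcal{P}_m(N)$ onto $\tilde{\triangle}^A$, the images lie in $\tilde{\triangle}^A$.
\end{itemize}
The only point requiring care, rather than a real obstacle, is the bookkeeping of the hidden coordinate $k_0$ in the last multiplicity slot. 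That is exactly where the displayed identity for $\lambda_0k_0$ is used.
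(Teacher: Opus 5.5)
Your proposal is correct and is the same direct calculation the paper intends (the paper only says the proof is a long but straightforward calculation): rewriting each coordinate of $\pi\circ\tilde{T}_i$ via $t_i=\lambda_i/\lambda_0$, $u_i=\lambda_0k_i$ and the identity $\lambda_0k_0=N-\sum_{i=1}^m t_iu_i$ reproduces the formulas for $\tilde{T}_0^A$ and $\tilde{T}_1^A$ term by term. Your extra remark that the fibres of $\pi$ are Teichm\"{u}ller orbits, which commute with the block-linear $\tilde{T}_i$, is a valid check on well-definedness but is not needed once the explicit formulas are in hand.
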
 
\begin{proof}The proof is a long but straightforward calculation. \end{proof}

\subsection{The inverses}

We can calculate the inverses to be:

$$\begin{array}{c}
(s_1, \ldots, s_m) \times  [v_1, \ldots, v_m)] \\ \\
\downarrow \tilde{T}_0^{-1} \\ \\
\left(   \frac{1}{1+s_m}   ,    \frac{s_1}{1+s_m}   ,  \frac{s_2}{1+s_m}   \ldots,   ,    \frac{s_{m-1}}{1+s_m}       \right)\\
 \times \\
  \left[ (1+s_m) \left(  N- \sum_{n=1}^{m-1} s_n v_n - (1+s_m) v_m   \right) , (1+s_m) v_1, (1+s_m)v_2, \ldots, (1+s_m) v_{m-1}  \right] \\
\end{array}$$
and
$$\begin{array}{c}
(s_1, \ldots, s_m) \times  [v_1, \ldots, v_m)] \\ \\
\downarrow \tilde{T}_1^{-1} \\ \\
\left(   \frac{s_1}{1+s_m}   ,  \frac{s_2}{1+s_m}     , \ldots  \frac{s_m}{1+s_m}      \right) \\
 \times \\
 \left[   (1+s_m)v_1, \ldots, (1+s_m)v_{m-1}, (1+s_m) \left( (1+s_m)v_m + \left( \sum_{n=1}^{m-1} s_nv_n\right)  - N \right)      \right]\\
\end{array}.$$

\subsection{The cylinders}

Finally, we want to write down the various domains and co-domains for the non-homogeneous triangle map.  We already know the two domains $ \tilde{\triangle}_0^A $ and $\tilde{\triangle}_1^A $.

The two co-domains will be 
\begin{eqnarray*}
 \tilde{\nabla}_0^A &=& \{\overline{ t } \times \overline{u}   \in \pi(\mathcal{P}_m(N)) : N- \sum_{i-1}^m t_i  k_i  > k_m \} \\
     \tilde{\nabla}_1^A &=& \{\overline{   t    } \times \overline{u}\in  \pi(\mathcal{P}_m(N))  :   N- \sum_{i-1}^m t_i  k_i  <k_m \} \\
\end{eqnarray*}
As before,  we are suppressing a subscript of an $``m"$.

The affine cylinders notation is almost exactly the same as  in subsection \ref{homog cylinders}.
Given two sequences $(i_0, \ldots, i_{n_1})$ and $[j_0, \ldots, j_{n_2}]$, each consisting of ones and zeros, we define the corresponding  cylinder

$$\tilde{\triangle}_{ (i_0, \ldots, i_{n_1})\times [j_0, \ldots, j_{n_2}]   }^A  $$
to be all $\overline{\lambda} \times \overline{k} \in \tilde{\triangle}^A$ such that 
$$   \tilde{T}^k(\overline{\lambda} \times \overline{k}) \in \tilde{\triangle}_{i_k}^A,   \tilde{T}^{-l}(\overline{\lambda} \times \overline{k}) \in \tilde{\nabla}_{j_l}^A,$$
for any  integers $k$ and $l$ with $0\leq k \leq n_1$ and any $0\leq l \leq n_2.$

As before, we have 

\begin{prop} For all integers  $k$ with $0\leq k \leq n_1$, we have 
$$\tilde{T}^{k}\tilde{\triangle}_{  (i_0, \ldots, i_{n_1})\times (j_0, \ldots, j_{n_2})   }^A =   \tilde{\triangle}_{ (i_{k+1},  \ldots, i_{n_1}) \times  (i_k, i_{k-1}, \ldots, i_{k_0}, j_0, \ldots, j_{n_2} ) }^A  $$
\end{prop}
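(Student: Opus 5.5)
The plan is to deduce this affine statement from its homogeneous counterpart, Proposition \ref{homog cylinders}, by transporting everything through the affine projection $\pi$. Two facts already established do most of the work. First, $\pi$ intertwines $\tilde{T}_i$ with $\tilde{T}_i^A$ (the commutative diagrams above). Second, $\pi$ maps $\tilde{\triangle}_i$ onto $\tilde{\triangle}_i^A$. I will also need the analogous statement for the co-domains, namely that $\pi(\tilde{\nabla}_j)=\tilde{\nabla}_j^A$. This holds because the condition $k_0>k_m$ is invariant under the rescaling $\overline{k}\mapsto\lambda_0\overline{k}$. After that rescaling, $k_0$ is recovered as $N-\sum t_iu_i$ and $k_m$ becomes $u_m$, which is exactly the defining inequality of $\tilde{\nabla}_0^A$ (reading the $k_i$ there as $u_i$).

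Next, $\pi$ is invariant along the Teichm\"uller orbits $(\overline{\lambda},\overline{k})\mapsto(c\overline{\lambda},c^{-1}\overline{k})$. Conversely, $\pi$ identifies each such orbit with a single point: the fibre over $\overline{t}\times\overline{u}$ is exactly the orbit of the normalized lift with $\lambda_0=1$, as built in the proof that $\pi$ is onto. The maps $\tilde{T}_i$ are linear and so commute with this scaling, and the sets $\tilde{\triangle}_i$ and $\tilde{\nabla}_j$ are unions of fibres. It follows that each homogeneous cylinder is a union of fibres of $\pi$, and that its image under $\pi$ is precisely the corresponding affine cylinder. Indeed, $\tilde{T}^k(x)\in\tilde{\triangle}_{i_k}$ holds if and only if $(\tilde{T}^A)^k(\pi x)\in\tilde{\triangle}^A_{i_k}$, and likewise for backward iterates, using the commutative square for the inverses. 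That square follows by inverting the forward one, because each $\tilde{T}_i$ is a bijection onto $\tilde{\nabla}_i$.

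Applying $\pi$ to both sides of Proposition \ref{homog cylinders} and using $\pi\circ\tilde{T}^k=(\tilde{T}^A)^k\circ\pi$ then gives the claimed identity. The direction ``$\subseteq$'' is immediate. For ``$\supseteq$'', take a point of the right-hand affine cylinder and lift it to its normalized preimage. That lift lies in the homogeneous cylinder by the fibre-saturation remark, so it has a homogeneous preimage under $\tilde{T}^k$, and projecting that preimage gives the required point.

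The main obstacle is bookkeeping rather than an idea. I must verify carefully that $\pi(\tilde{\nabla}_j)=\tilde{\nabla}_j^A$, and that the explicit inverse formulas for $\tilde{T}_i^A$ given above really are conjugate, via $\pi$, to $\tilde{T}_i^{-1}$ on $\tilde{\nabla}_i$. If that conjugacy check becomes messy, a fallback is to argue directly, as in the homogeneous case, by induction on $k$. The case $k=1$ amounts to saying that $\tilde{T}^A_{i_0}$ maps $\tilde{\triangle}^A_{i_0}$ bijectively onto $\tilde{\nabla}^A_{i_0}$, which is the affine image of the earlier proposition that $\tilde{T}_i:\tilde{\triangle}_i\to\tilde{\nabla}_i$ is one-to-one and onto. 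Once that case holds, the index shift follows formally from the definition of cylinders.
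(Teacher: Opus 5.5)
Your argument is correct in substance, but it does not follow the paper's route. The paper gives no argument beyond ``As before,'' presenting the affine statement as the verbatim analogue of the homogeneous cylinder proposition, which is itself only asserted.

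The reasoning implicit in ``as before'' is a direct definition chase in affine coordinates. It uses only that $\tilde{T}^A$ restricts to bijections $\tilde{\triangle}^A_i\to\tilde{\nabla}^A_i$ (the paper writes the inverses out explicitly), with $\tilde{\nabla}^A_0\cap\tilde{\nabla}^A_1=\emptyset$. One step of $\tilde{T}^A$ drops $i_0$ from the forward word and prepends it to the backward word. Conversely, any point of the shifted cylinder lies in $\tilde{\nabla}^A_{i_0}$, so its $\tilde{T}^A_{i_0}$-preimage lies in the original cylinder. That is your fallback.

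Your main route instead treats $\pi$ as the quotient by the Teichm\"{u}ller scaling. You note that $\tilde{\triangle}_i$, $\tilde{\nabla}_j$, and hence all homogeneous cylinders, are unions of fibres, and you push the homogeneous identity down. This explains structurally why every itinerary statement descends verbatim, and it gives affine bijectivity as a by-product. One caution: inverting the affine square needs injectivity of $\tilde{T}^A_i$. That follows from injectivity of $\tilde{T}_i$ only when combined with equivariance and the fact that fibres are Teichm\"{u}ller orbits. Your main route is also not self-contained, because it rests on the homogeneous proposition, which needs the same definition chase. The direct affine argument is therefore shorter and independent.

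You do need to repair the indexing, which you transcribe without comment. As printed, the identity is off by one, and $i_{k_0}$ should read $i_0$. By the definition of cylinders, the printed right-hand side is the image under $(\tilde{T}^A)^{k+1}$, not $(\tilde{T}^A)^{k}$:
\[
(\tilde{T}^A)^{k+1}\,\tilde{\triangle}^A_{(i_0,\ldots,i_{n_1})\times[j_0,\ldots,j_{n_2}]}=\tilde{\triangle}^A_{(i_{k+1},\ldots,i_{n_1})\times[i_k,\ldots,i_0,j_0,\ldots,j_{n_2}]},\qquad 0\le k\le n_1 .
\]
Read literally at $k=0$, the statement equates a cylinder, which lies in $\tilde{\nabla}^A_{j_0}$, with its image under $\tilde{T}^A$, which lies in $\tilde{\nabla}^A_{i_0}$. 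For a nonempty cylinder with $i_0\neq j_0$ these two sets are disjoint.

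Your fallback's base case (one step corresponds to bijectivity of $\tilde{T}^A_{i_0}$) already uses the corrected form. Your main route, however, applies $\pi$ to the homogeneous proposition as printed, which has the same shift, so as written it transports a false identity. Once you state the corrected version in both settings, the rest of your proof stands.
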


As with Theorem \ref{homog natural exension} , this will allow us to claim:
\begin{theorem}   The map $\tilde{T}$ is the natural extension of the map $T$.

\end{theorem}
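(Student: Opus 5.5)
We verify directly the four conditions of Section \ref{basics of natural extension} for the map $\tilde{T}$ of this section. The covering space is $X=\mbox{Dom}(\tilde{T})\subset \tilde{\triangle}^A_m(N)$ and the base is $Y=\mbox{Dom}(T)$. The projection is $\psi(\overline{t}\times\overline{u})=\overline{t}$, which forgets the multiplicity coordinates. We do not appeal to Theorem \ref{homog natural exension}; the argument is carried out in the affine coordinates themselves.

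The first three conditions are essentially bookkeeping.
\begin{itemize}
\item \emph{Injectivity.} $\tilde{T}$ is one-to-one, since the explicit formulas for $\tilde{T}_0^{-1}$ and $\tilde{T}_1^{-1}$ of the previous subsection invert $\tilde{T}_0$ and $\tilde{T}_1$. Moreover $\tilde{T}_0$ maps onto $\tilde{\nabla}^A_0$ and $\tilde{T}_1$ maps onto $\tilde{\nabla}^A_1$, and these two sets are disjoint. So the two branches have disjoint images, and $\tilde{T}$ is a bijection off the measure-zero set $\tilde{\nabla}^A_D$.
\item \emph{Surjectivity of $\psi$.} For any $\overline{t}\in\triangle$, choosing all $u_i$ small enough makes $N-\sum t_iu_i>0$, so $\psi$ is onto.
\item \emph{Commutation.} The diagram commutes because the first factor of $\tilde{T}_i$ is, by inspection of the formulas, exactly the non-homogeneous triangle map $T_i$ applied to $\overline{t}$. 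Hence $\psi\circ\tilde{T}=T\circ\psi$.
\end{itemize}

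The substantive points are the $\sigma$-algebra condition and the universality condition. For the $\sigma$-algebra condition we use the cylinder proposition just stated. Pulling back a forward cylinder $\triangle_{i_0,\ldots,i_n}$ under $\psi$ gives $\tilde{\triangle}^A_{(i_0,\ldots,i_n)\times[\,]}$. Applying $\tilde{T}^{k}$ then produces, by that proposition, the cylinders in which the first $k$ symbols have moved into the backward (square-bracket) slot. Hence the $\sigma$-algebra generated by all $\tilde{T}^{k}\psi^{-1}(\mathcal{G})$ contains every two-sided cylinder $\tilde{\triangle}^A_{(i)\times[j]}$.

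It remains to show that these two-sided cylinders generate the Borel sets, i.e.\ that they separate points. The forward itinerary determines $\overline{t}$, because $T$ has forward cylinders shrinking to points; this is the ergodicity and convergence input from \cite{Garrity-Lehmann Duke}. For the backward itinerary, iterating $\tilde{T}^{-1}$ multiplies the $\overline{k}$-vector by products $T_{i_1}^{\top}\cdots T_{i_n}^{\top}$. Equivalently, the set of $\overline{u}$ compatible with a backward word $[j_0,\ldots,j_n]$ is the image of the positive cone under the transposes of the inverse matrices $T_j^{-1}$. This is again a triangle-map cone, now in the $\overline{k}$ variables, intersected with the slice determined by $N$.

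I would therefore show that these nested cones shrink to a ray for a.e.\ backward word. This uses the same contraction argument as for the forward cylinders, with the matrices $(T_i^{-1})^{\top}$ in place of $T_i^{-1}$. The ray is then cut down to a single point by the constraint $\overline{k}\cdot\overline{\lambda}^{\top}=N$. This duality step is the main obstacle. One must check that the transposed products still have the positivity and mixing needed for the cones to collapse, and that almost every backward word occurs for almost every point with respect to the lifted measure. I would try to reduce it to the forward statement by recognizing that $(T_i^{-1})^{\top}$ is conjugate, via the coordinate reversal $k_j\mapsto k_{m-j}$, to a branch of a triangle-type map. This reversal is the same symmetry that underlies the Young conjugation discussed later.

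Finally, for universality we define $\Phi(x)=(\psi(x),\psi(\tilde{T}^{-1}x),\psi(\tilde{T}^{-2}x),\ldots)$ into Rohlin's abstract model. By the commutation relation, $\Phi$ intertwines $\tilde{T}^{-1}$ with the shift. By the separation just proved, $\Phi$ is injective a.e. It is onto a.e.\ because every admissible backward chain of preimages corresponds to a backward word, whose cylinder is nonempty. Since any other extension factors through the abstract model, it factors through $\tilde{T}$ as well. This proves the theorem.
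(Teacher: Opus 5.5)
\textbf{There is a genuine gap: the one non-routine step is left open, and the reduction you propose for it does not work.} Your outline matches the paper's: conditions 1, 3 and 4 come from the preceding discussion, and condition 2 comes from the cylinder proposition. You are also right that the cylinder proposition alone does not settle condition 2. It only places the two-sided cylinders in $\bigvee_{k\ge 0}\tilde{T}^k\psi^{-1}(\mathcal{G})$; one still has to show that the backward itinerary determines $\overline{u}$. The paper's proof stops at the cylinder proposition.

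Your reduction via reversal fails for two reasons.
\begin{itemize}
\item The reversal $R:k_j\mapsto k_{m-j}$ does conjugate $(T_1^{-1})^{\top}$ to $T_1^{-1}$. But it turns $(T_0^{-1})^{\top}$ into $\overline{k}\mapsto(k_m,k_0,\ldots,k_{m-2},k_{m-1}+k_m)$, which is neither $T_0$ nor $T_0^{-1}$.
\item $R$ fixes the positive orthant instead of carrying it to $\triangle$. So the backward cones do not become triangle-map cylinders, and no known convergence result applies.
\end{itemize}

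\textbf{The fix: use the $\overline{k}$-component of Young conjugation itself.} This is reversal composed with partial sums, $S(\overline{k})=(k_0+\cdots+k_m,\ldots,k_0+k_1,k_0)$. Reading off the $\overline{k}\to\overline{\mu}$ component of the paper's identity $\tilde{T}_i\circ\mathcal{Y}\circ\tilde{T}_i=\mathcal{Y}$ gives $S\,(T_i^{-1})^{\top}=T_i^{-1}S$. Also $S(\R^{m+1}_{>0})=\triangle$. Hence
\[
(T_{j_0}^{-1})^{\top}\cdots(T_{j_n}^{-1})^{\top}\,\R^{m+1}_{>0}=S^{-1}\bigl(T_{j_0}^{-1}\cdots T_{j_n}^{-1}\triangle\bigr)=S^{-1}(\triangle_{j_0,\ldots,j_n}).
\]
Equivalently, since $\mathcal{Y}\circ\tilde{T}^{-1}=\tilde{T}\circ\mathcal{Y}$ and $\mathcal{Y}(\tilde{\nabla}_i)=\tilde{\triangle}_i$, the backward itinerary of $x$ is the forward itinerary of $\mathcal{Y}(x)$. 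So the backward cones of $x$ shrink to a ray exactly when the forward cylinders of the $\overline{\lambda}$-part of $\mathcal{Y}(x)$ do. Since $\mathcal{Y}$ (and its affine version $\mathcal{Y}_A$) is a diffeomorphism, it maps null sets to null sets, so the almost-everywhere forward statement transfers. That is the only place the convergence input you cite is needed. Your forward-shrinking argument for $\overline{t}$ is superfluous, because $\psi^{-1}(\mathcal{G})$ already contains all the $\overline{t}$-information.

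\textbf{You never specify the measure.} Every ``a.e.'' in your argument, and the requirement that $\psi$ be a measure-preserving factor, refers to one. The natural choice is Lebesgue measure $\mu$ on $\tilde{\triangle}^A_m(N)$.
\begin{itemize}
\item It is $\tilde{T}$-invariant, because $|\mathrm{Jac}(\tilde{T}^A_i)|=1$ and $\tilde{T}$ is an a.e.\ bijection of $\tilde{\triangle}^A_0\cup\tilde{\triangle}^A_1$ onto $\tilde{\nabla}^A_0\cup\tilde{\nabla}^A_1$.
\item Its push-forward $\psi_*\mu$ has density equal to the volume of $\{\overline{u}>0:\sum t_iu_i<N\}$, namely $N^m/(m!\,t_1\cdots t_m)$. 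This is the $T$-invariant density.
\item This measure is infinite, so you are really using the $\sigma$-finite form of Rohlin's construction.
\end{itemize}

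\textbf{Universality then simplifies.} A measure on the inverse limit is determined by its one-coordinate marginals, and those of $\Phi_*\mu$ all equal $\psi_*\mu$ by invariance. So $\Phi_*\mu$ is the canonical measure. Since $\Phi$ is injective mod $0$ by the first point, $\Phi$ is an isomorphism onto Rohlin's model, and your nonempty-cylinder argument for surjectivity can be dropped.
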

The proof is the same as the earlier theorem.

\section{``Graphs'' of natural extensions}\label{``Graphs'' of Natural Extensions}

\subsection{The classical Farey map:  $m=1$ case}
Let us look at the case when $m=1.$ This case is classical, and is implicit in Arnoux and Nogueira \cite{Arnoux-Nogueira-93}.

Fix a positive number $N$. Set

$$ \tilde{\triangle}^A(N)  = \{ (t) \times (u)  :1\geq  t \geq 0, u\geq 0, N  \geq tu \}.$$
The advantage of $m=1$ is that this is a domain in $\R^2,$ and hence we can actually draw it. 

The two  slow non-homogeneous triangle map subdomains are :

\begin{eqnarray*}
\tilde{\triangle}_0^A &=& \{ (t)\times (u)     \in \tilde{\triangle}^A: 1/2< t  \}  \\
\tilde{\triangle}_1^A &=& \{ (t) \times (u)   \in \tilde{\triangle}^A : 1/2>t\}  \\
\end{eqnarray*}
(When $m=1$, the term $t_1+ t_m$ becomes $t_1+t_1.$)

The two co-domains now will be 
\begin{eqnarray*}
 \tilde{\nabla}_0^A &=& \{   (t) \times  (u)   \in \pi(\mathcal{P}_m(N)) : N- tu > u \} \\
 &=&  \{   (t) \times  (u)   \in \pi(\mathcal{P}_m(N)) : \frac{N}{1+t}> u \} \\
     \tilde{\nabla}_1^A &=& \{    (t)  \times   (u)   \in   \pi(\mathcal{P}_m(N))  :   N-  t u  < u  \} \\
      &=&  \{   (t) \times  (u)   \in \pi(\mathcal{P}_m(N)) : \frac{N}{1+t}< u \} 
\end{eqnarray*}

These we can actually draw (where we are letting $N=1$):

$$\begin{array}{cc}

\begin{tikzpicture}[scale=4]

\draw[->](0,0)--(1.2, 0);
\draw[->](0,0)--(0, 2);
\draw[dashed](1,0)--(1, 1);
\draw[dashed](1/2,0)--(1/2, 2);

\node[right] at (1.2,0){$t$};
\node[left]at (0,1.2){$u$};

\draw[-](1,.03)--(1, -.03);
\node[below] at (1,0){$1$};

\draw[-](.5,.03)--(.5, -.03);
\node[below] at (.5,0){$1/2$};

\node[left] at (0,1){$1$};

\draw[-](.03, 1)--(-.03, 1);
\draw[-](.03, 1/2)--(-.03, 1/2);

\node[left] at (0,1/2){$1/2$};

\draw[scale=1, domain=.49:1, smooth, variable=\x] plot ({\x}, {1/(  \x)});

\draw[->](.49,1/.49)--(.49, 1/.49);

\node[left] at (.85,.75){$ \tilde{\triangle}_0^A$};
\node[left] at (.25,.75){$ \tilde{\triangle}_1^A$};

\node[left] at (1,1.75){$\begin{array}{c} 1=tu \end{array}$};

\end{tikzpicture}

& 
\begin{tikzpicture}[scale=4]

\draw[->](0,0)--(1.2, 0);
\draw[->](0,0)--(0, 2);
\draw[dashed](1,0)--(1, 1);

\node[right] at (1.2,0){$t$};
\node[left]at (0,1.2){$u$};

\draw[-](1,.03)--(1, -.03);
\node[below] at (1,0){$1$};

\draw[-](.5,.03)--(.5, -.03);
\node[below] at (.5,0){$1/2$};

\draw[-](.03, 1)--(-.03, 1);
\node[left] at (0,1){$1$};

\draw[-](.03, 1/2)--(-.03, 1/2);

\node[left] at (0,1/2){$1/2$};

\draw[scale=1, domain=.49:1, smooth, variable=\x] plot ({\x}, {1/(  \x)});

\draw[->](.49,1/.49)--(.49, 1/.49);

\draw[scale=1, domain=0:1, smooth, variable=\x, dashed] plot ({\x}, {1/( 1+  \x)});

\node[left] at (.53,1){$ \tilde{\nabla}_1^A$};
\node[left] at (.53,.5){$ \tilde{\nabla}_0^A$};

\node[left] at (1,1.75){$\begin{array}{c} 1=tu \end{array}$};

\draw[dotted,->](1,1.3)--(.76, 1/1.75);

\node[right] at (1, 1.3){$ u=1/(1+t)$};

\end{tikzpicture}

\end{array} $$

We have 

$$\begin{array}{c}
(t) \times  [u] \\
\downarrow \tilde{T}_0 \\
\left(    \frac{1-t}{t}        \right) \times \left[ t \left( N- tu\right)   \right] \\
\end{array}$$
and 
$$\begin{array}{c}
(t \times  [u] \\
\downarrow \tilde{T}_1 \\
\left(   \frac{t}{1-t}  \right) \times \left[  (1-t)  \left( N+ u- tu \right)   \right] \\

\end{array}$$

The cylinders $\tilde{\triangle}_{(0) \times [0]}^A, \tilde{\triangle}_{(0) \times [1]}^A, \tilde{\triangle}_{(1) \times [0]}^A $ and $\tilde{\triangle}_{(1) \times [1]}^A  $ are

\begin{center}
\begin{tikzpicture}[scale=4]

\draw[->](0,0)--(1.2, 0);
\draw[->](0,0)--(0, 2);
\draw[dashed](1,0)--(1, 1);
\draw[dashed](1/2,0)--(1/2, 2);

\node[right] at (1.2,0){$t$};
\node[left]at (0,1.2){$u$};

\draw[-](1,.03)--(1, -.03);
\node[below] at (1,0){$1$};

\draw[-](.5,.03)--(.5, -.03);
\node[below] at (.5,0){$1/2$};

\draw[-](.03, 1)--(-.03, 1);
\node[left] at (0,1){$1$};

\draw[-](.03, 1/2)--(-.03, 1/2);

\node[left] at (0,1/2){$1/2$};

\draw[scale=1, domain=.49:1, smooth, variable=\x] plot ({\x}, {1/(  \x)});

\draw[->](.49,1/.49)--(.49, 1/.49);

\draw[scale=1, domain=0:1, smooth, variable=\x, dashed] plot ({\x}, {1/( 1+  \x)});

\node[left] at (.47,1.3){$(1)\times [1]$};
\node[left] at (.47,.3){$(1)\times [0]$};
\node[left] at (.97,.3){$(0)\times [0]$};
\node[left] at (.97,.8){$(0)\times [1]$};

\node[left] at (1,1.75){$\begin{array}{c} 1=tu \end{array}$};

\

\end{tikzpicture}
\end{center}
where we are letting $(i)\times [j]$ stand for $\tilde{\triangle}_{(i) \times [j]}^A$ in order to reduce clutteredness in the diagram.

\subsection{``Graphs'' for higher dimensional Natural Extensions}
Unlike in the non-homogenous $m=1$ case, we cannot graph the natural extensions in higher dimensions.  But we can represent them schematically.

In the non-homogeneous case we have

$$\begin{array}{cc}

\begin{tikzpicture}[scale=3]

\draw[->](0,0)--(1.2, 0);
\draw[->](0,0)--(0, 2);
\draw[dashed](1,0)--(1, 1);
\draw[dashed](1/2,0)--(1/2, 2);

\node[right] at (1.2,0){$\overline{t}$};
\node[left]at (0,1.2){$\overline{u}$};

\draw[-](1,.03)--(1, -.03);
\draw[-](.5,.03)--(.5, -.03);
\node[below] at (.5,0){$1=t_1+t_m$};

;

\draw[scale=1, domain=.49:1, smooth, variable=\x] plot ({\x}, {1/(  \x)});

\draw[->](.49,1/.49)--(.49, 1/.49);

\node[left] at (1,.75){$\begin{array}{c}  \tilde{\triangle}_0^A\\ \end{array}$};
\draw[->](-.2,.7)--(.7, .6);
\node[left] at (-.2,.7){$1<t_1+t_m$};

\node[left] at (.5,.5){$\begin{array}{c}  \tilde{\triangle}_1^A\\  \end{array}$};
\draw[->](-.2,.2)--(.2, .3);
\node[left] at (-.2,.2){$1>t_1+t_m$};

\node[left] at (1.3,1.78){$\begin{array}{c} N=\overline{t} \cdot \overline{u} \end{array}$};

\end{tikzpicture}

& 
\begin{tikzpicture}[scale=3]

\draw[->](0,0)--(1.2, 0);
\draw[->](0,0)--(0, 2);
\draw[dashed](1,0)--(1, 1);

\node[right] at (1.2,0){$\overline{t}$};
\node[left]at (0,1.2){$\overline{u}$};

\draw[-](1,.03)--(1, -.03);
\node[below] at (1,0){$1$};

\draw[-](.5,.03)--(.5, -.03);
\node[below] at (.5,0){$1= t_1+t_m$};

\node[left] at (1.3,1.78){$\begin{array}{c} N=\overline{t} \cdot \overline{u} \end{array}$};

\draw[scale=1, domain=.49:1, smooth, variable=\x] plot ({\x}, {1/(  \x)});

\draw[->](.49,1/.49)--(.49, 1/.49);

\draw[scale=1, domain=0:1, smooth, variable=\x, dashed] plot ({\x}, {1/( 1+  \x)});

\node[left] at (.53,1){$ \tilde{\nabla}_1^A$};
\node[left] at (.53,.5){$ \tilde{\nabla}_0^A$};

\end{tikzpicture}

\end{array} $$

The same diagrams (with slightly different labelings) can be used for the natural extensions in the homogeneous case:

$$\begin{array}{cc}

\begin{tikzpicture}[scale=3]

\draw[->](0,0)--(1.2, 0);
\draw[->](0,0)--(0, 2);
\draw[dashed](1,0)--(1, 1);
\draw[dashed](1/2,0)--(1/2, 2);

\node[right] at (1.2,0){$\overline{\lambda}$};
\node[left]at (0,1.2){$\overline{k}$};

\draw[-](1,.03)--(1, -.03);
\draw[-](.5,.03)--(.5, -.03);
\node[below] at (.5,0){$\lambda_0=\lambda_1+\lambda_m$};

;

\draw[scale=1, domain=.49:1, smooth, variable=\x] plot ({\x}, {1/(  \x)});

\draw[->](.49,1/.49)--(.49, 1/.49);

\node[left] at (.9,.75){$ \tilde{\triangle}_0$};
\draw[->](-.2,.7)--(.7, .6);
\node[left] at (-.2,.7){$\lambda_0<\lambda_1+\lambda_m$};

\node[left] at (.4,.5){$ \tilde{\triangle}_1$};
\draw[->](-.2,.2)--(.2, .3);
\node[left] at (-.2,.2){$\lambda_0>\lambda_1+\lambda_m$};

\node[left] at (1.3,1.78){$\begin{array}{c} N=\overline{t} \cdot \overline{u} \end{array}$};

\end{tikzpicture}

& 
\begin{tikzpicture}[scale=3]

\draw[->](0,0)--(1.2, 0);
\draw[->](0,0)--(0, 2);
\draw[dashed](1,0)--(1, 1);

\node[right] at (1.2,0){$\overline{t}$};
\node[left]at (0,1.2){$\overline{u}$};

\draw[-](1,.03)--(1, -.03);
\node[below] at (1,0){$1$};

\draw[-](.5,.03)--(.5, -.03);
\node[below] at (.5,0){$\lambda_0= \lambda_1+\lambda_m$};

\node[left] at (1.3,1.78){$\begin{array}{c} N=\overline{t} \cdot \overline{u} \end{array}$};

\draw[scale=1, domain=.49:1, smooth, variable=\x] plot ({\x}, {1/(  \x)});

\draw[->](.49,1/.49)--(.49, 1/.49);

\draw[scale=1, domain=0:1, smooth, variable=\x, dashed] plot ({\x}, {1/( 1+  \x)});

\node[left] at (.7,1.5){$\begin{array}{c} \tilde{\nabla}_1\\ k_0<k_m \end{array}$};
\node[left] at (.7,.4){$\begin{array}{c} \tilde{\nabla}_0\\ k_0>k_m \end{array}$};

\end{tikzpicture}

\end{array} $$

As before the cylinders $\tilde{\triangle}_{(0) \times [0]}^A, \tilde{\triangle}_{(0) \times [1]}^A, \tilde{\triangle}_{(1) \times [0]}^A $ and $\tilde{\triangle}_{(0) \times [0]}^A  $ can be captured as
\begin{center}
\begin{tikzpicture}[scale=4]

\draw[->](0,0)--(1.2, 0);
\draw[->](0,0)--(0, 2);
\draw[dashed](1,0)--(1, 1);
\draw[dashed](1/2,0)--(1/2, 2);

\node[right] at (1.2,0){$t$};
\node[left]at (0,1.2){$u$};

\draw[-](1,.03)--(1, -.03);
\node[below] at (1,0){$1$};

\draw[-](.5,.03)--(.5, -.03);
\node[below] at (.5,0){$1/2$};

\draw[-](.03, 1)--(-.03, 1);
\node[left] at (0,1){$1$};

\draw[-](.03, 1/2)--(-.03, 1/2);

\node[left] at (0,1/2){$1/2$};

\draw[scale=1, domain=.49:1, smooth, variable=\x] plot ({\x}, {1/(  \x)});

\draw[->](.49,1/.49)--(.49, 1/.49);

\draw[scale=1, domain=0:1, smooth, variable=\x, dashed] plot ({\x}, {1/( 1+  \x)});

\node[left] at (.47,1.3){$(1)\times [1]$};
\node[left] at (.47,.3){$(1)\times [0]$};
\node[left] at (.97,.3){$(0)\times [0]$};
\node[left] at (.97,.8){$(0)\times [1]$};

\node[left] at (1,1.75){$\begin{array}{c} 1=tu \end{array}$};

\

\end{tikzpicture}
\end{center}
where again we are letting $(i)\times [j]$ stand for $\tilde{\triangle}_{(i) \times [j]}^A$.




\section{Young conjugation: an internal symmetry}\label{Young conjugation: An Internal Symmetry}

\subsection{Young conjugate map}

As mentioned in the introduction, a large part of motivation is that the triangle map can be used to study integer partitions.  But integer partitions can in turn be used to motivate properties of the natural extension. 

We can write any integer partition $$(\lambda_0, \ldots \lambda_m) \times [k_0, \ldots , k_m]$$
where all the terms are positive integers and 
$$\lambda_0>\lambda_1 > \cdots > \lambda_m $$
via its Young shape, which is $k_0$ horizontal  rows of  $\lambda_0$ boxes placed above $k_1$ horizontal  rows of  $\lambda_1$ boxes, etc.

Thus we can describe 
$$(5,3, 1) \times [2, 4, 5]$$ as

 \[
        \yng(5,5,3,3,3,3,1,1,1,1,1)
    \]

Just looking at this diagram, we can see that there is a clear symmetry: flip the diagram about the diagonal, to get

 \[
       \yng(11,6,6,2,2)
    \]
  which in coordinates is 
  $$(11, 6, 2) \times [1,2,2].$$
  Both are partitions of $27$.
  
  This ``flip'' is called Young conjugation:
  
  \begin{definition} Young conjugation is the map 
  $$\begin{array}{c}  (\lambda_0, \ldots \lambda_m) \times [k_0, \ldots , k_m] \\ \\
  \mathcal{Y} \downarrow \\ \\
   \left( \sum_{i=0}^m  k_i,  \sum_{i=0}^{m-1}  k_i, \ldots,k_0\right) \times [\lambda_m, \lambda_{m-1}-\lambda_m, \lambda_{m-2}- \lambda_{m-1}, 
  \ldots , \lambda_0 - \lambda_1]   \end{array}   $$
    \end{definition}
    
    Now we have that 
   $$(\overline{\lambda})\times [\overline{k}] \vdash N$$
   if and only if 
   $$\mathcal{Y}\left((\overline{\lambda})\times [\overline{k}] \right) \vdash N$$

   Though the Young shape only makes sense if all of the $\lambda_i$ and the $ k_j$ are positive integers, the map $\mathcal{Y}$ makes sense for any vector in 
   $\R^{m+1} \times \R^{m+1}.  $
Recalling that we defined $$\mathcal{P}_m(N) = \{\overline{\lambda} \times \overline{k}: \overline{\lambda} \in \triangle, k_i>0, N = \sum_{i=0}^m \lambda_i k_i \} $$
we have 

\begin{prop} The map
$$\mathcal{Y}: \mathcal{P}_m(N) \rightarrow \mathcal{P}_m(N)$$
is a one-to-one onto diffeomorphism.
\end{prop}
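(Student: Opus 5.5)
The plan is to write $\mathcal{Y}$ as a pair of linear maps and check three things: it lands in $\mathcal{P}_m(N)$, it has an explicit inverse of the same shape, and smoothness is automatic.

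\textbf{Linear form.} Let $L$ be the lower-triangular $(m+1)\times(m+1)$ matrix of ones, so that $(L\overline{k}^{\top})_j = k_0+\cdots+k_j$. Let $J$ be the reversal permutation matrix. Then the new parts vector is $\overline{\mu}^{\top}=JL\,\overline{k}^{\top}$. The new multiplicity vector is $\overline{\kappa}^{\top}=J\,L^{-1}\overline{\lambda}^{\top}$, up to the ordering convention. This holds because $L^{-1}$ is the difference operator, and reversing $\overline{\lambda}$ gives $(\lambda_m,\lambda_{m-1}-\lambda_m,\ldots,\lambda_0-\lambda_1)$. So $\mathcal{Y}$ is the restriction to $\mathcal{P}_m(N)$ of the linear map
$$\overline{\lambda}\times\overline{k}\mapsto (\overline{k}L^{\top}J)\times(\overline{\lambda}(L^{-1})^{\top}J)$$
on $\R^{m+1}\times\R^{m+1}$. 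It is therefore smooth.

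\textbf{$\mathcal{Y}$ maps $\mathcal{P}_m(N)$ into itself.}
\begin{itemize}
\item The new parts are strictly decreasing and positive. Consecutive partial sums differ by a single $k_i>0$, and the smallest one is $k_0>0$.
\item The new multiplicities are positive. They are $\lambda_m>0$ and the differences $\lambda_{i}-\lambda_{i+1}>0$, which are positive because $\overline{\lambda}\in\triangle$.
\item The pairing is preserved. Computing directly,
$$\sum_j \mu_j\kappa_j=\overline{k}L^{\top}JJ(L^{-1})^{\top}\overline{\lambda}^{\top}=\overline{k}\cdot\overline{\lambda}^{\top}=N.$$
This is the Abel summation identity $\sum_i k_i\lambda_i=\sum_j(k_0+\cdots+k_j)(\lambda_j-\lambda_{j+1})$, with the convention $\lambda_{m+1}=0$. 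It is exactly the same mechanism as the $A$, $A^{-1}$ trick of Section \ref{Partitions}.
\end{itemize}

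\textbf{Bijectivity.} I would show $\mathcal{Y}\circ\mathcal{Y}=\mathrm{id}$, which mirrors the fact that flipping a Young diagram twice returns it. Apply $\mathcal{Y}$ to $\overline{\mu}\times\overline{\kappa}$.
\begin{itemize}
\item The new parts are the partial sums of $\overline{\kappa}$ in reverse order. Summing the differences telescopes: $\kappa_0=\lambda_m$, then $\kappa_0+\kappa_1=\lambda_{m-1}$, and so on up to $\lambda_0$. Reversed, this recovers $(\lambda_0,\ldots,\lambda_m)$.
\item The new multiplicities are successive differences of $\overline{\mu}$, taken from the smallest part upward. They are $\mu_m=k_0$, $\mu_{m-1}-\mu_m=k_1$, and so on, recovering $\overline{k}$.
\end{itemize}
So $\mathcal{Y}$ is an involution of $\mathcal{P}_m(N)$. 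Hence it is one-to-one and onto, and it is its own smooth inverse, so it is a diffeomorphism.

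\textbf{Main obstacle.} The only step needing care is index bookkeeping. The definition lists the multiplicities starting from $\lambda_m$, and this ordering must pair correctly with the parts ordered from largest partial sum to smallest. I would fix the convention by checking it on the given example $(5,3,1)\times[2,4,5]\mapsto(11,6,2)\times[1,2,2]$: there $11$ pairs with $\lambda_2=1$, $6$ with $\lambda_1-\lambda_2=2$, and $2$ with $\lambda_0-\lambda_1=2$. Once that pairing is fixed, the rest is routine linear algebra.
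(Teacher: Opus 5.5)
Your proof is correct and is the direct calculation the paper has in mind (the paper says only ``a simple calculation''): you check that the new parts strictly decrease and are positive, that the new multiplicities are positive, that $N$ is preserved by Abel summation, and that $\mathcal{Y}\circ\mathcal{Y}=\mathrm{id}$, which gives bijectivity with a smooth inverse. One small slip: the multiplicity vector is $L^{-1}J\,\overline{\lambda}^{\top}=J(L^{-1})^{\top}\overline{\lambda}^{\top}$, with row form $\overline{\lambda}L^{-1}J$, not $JL^{-1}\overline{\lambda}^{\top}$ or $\overline{\lambda}(L^{-1})^{\top}J$, which would give the negative differences $\lambda_{j}-\lambda_{j-1}$; your pairing computation and your entrywise checks already use the correct version, so nothing else changes.
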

The proof is a simple calculation.

As we will later see, particularly relevant are the points in  $\mathcal{P}_m(N)$ which are fixed by $\mathcal{Y}.$
For notations, set 
$$\mathcal{F} = \{ (\overline{\lambda}) \times [\overline{k}] \in \mathcal{P}_m(N): \mathcal{Y} ( (\overline{\lambda}) \times [\overline{k}] ) =  (\overline{\lambda}) \times [\overline{k}] \}.$$

A direct calculation shows  

\begin{prop} We have that  $ (\overline{\lambda}) \times [\overline{k}] \in \mathcal{F}$ if and only if 
\begin{eqnarray*}
\lambda_0 &=& k_0 + \ldots + k_{ m  }  \\
\lambda_1 &=& k_0 + \ldots + k_{ m  -1}  \\
& \vdots &   \\
\lambda_{m-1} &=& k_0 + k_1 \\
\lambda_m &=& k_0
\end{eqnarray*}
\end{prop}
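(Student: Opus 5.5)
The plan is to write out the fixed-point condition $\mathcal{Y}\bigl((\overline{\lambda})\times[\overline{k}]\bigr) = (\overline{\lambda})\times[\overline{k}]$ coordinate by coordinate, using the definition of Young conjugation. This gives two families of equations. The parts give
$$\lambda_j = \sum_{i=0}^{m-j} k_i, \qquad 0\le j\le m,$$
and the multiplicities give
$$k_0=\lambda_m, \qquad k_j = \lambda_{m-j}-\lambda_{m-j+1}\quad (1\le j\le m).$$
The proposition asserts that the first family alone is equivalent to the whole system. So the proof reduces to showing that, for points of $\mathcal{P}_m(N)$, the second family follows from the first.

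For the forward direction, if $(\overline{\lambda})\times[\overline{k}]\in\mathcal{F}$, then the parts equations hold by definition, and these are exactly the displayed equations in the proposition.

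For the converse, I assume $\lambda_j=k_0+\cdots+k_{m-j}$ for all $j$. Then $\lambda_m=k_0$, which is the first multiplicity equation. For $1\le j\le m$, I telescope:
$$\lambda_{m-j}-\lambda_{m-j+1} = \sum_{i=0}^{j}k_i-\sum_{i=0}^{j-1}k_i = k_j.$$
This is precisely the $j$-th multiplicity equation. Hence $\mathcal{Y}$ fixes the point, and membership in $\mathcal{P}_m(N)$ is preserved by the earlier proposition that $\mathcal{Y}$ maps $\mathcal{P}_m(N)$ to itself.

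The only real obstacle is index bookkeeping. The definition of $\mathcal{Y}$ lists the parts in decreasing order of partial sums and the multiplicities starting from $\lambda_m$, so I will carefully match the $j$-th slot on each side and check that the reversal of indices is consistent. No analytic difficulty arises.
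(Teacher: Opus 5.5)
Your proof is correct and is essentially the ``direct calculation'' the paper invokes: the displayed equations are the fixed-point conditions on the parts, and telescoping them gives the multiplicity conditions $k_0=\lambda_m$ and $k_j=\lambda_{m-j}-\lambda_{m-j+1}$. Your appeal to $\mathcal{Y}$ preserving $\mathcal{P}_m(N)$ is unnecessary: the statement implicitly assumes the point already lies in $\mathcal{P}_m(N)$, and $\mathcal{Y}$ sends it to itself.
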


\subsection{The internal symmetry}

The Young conjugation map creates for us additional structure on the natural extension's domain $\mathcal{P}_m(N)$.

First, we have

\begin{prop}\label{initial symmetry}
\begin{eqnarray*}
 \tilde{\triangle}_0 &\xrightarrow{\mathcal{Y}} &   \tilde{\nabla}_0 \\
\tilde{\triangle}_1&\xrightarrow{\mathcal{Y}} &\tilde{\nabla}_1 \\
 \tilde{\nabla}_0 &\xrightarrow{\mathcal{Y}} &  \tilde{\triangle}_0 \\
\tilde{\nabla}_1 &\xrightarrow{\mathcal{Y}} &  \tilde{\triangle}_1 \\
\end{eqnarray*}
\end{prop}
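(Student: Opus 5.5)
We will verify the four maps by writing out $\mathcal{Y}$ coordinatewise and comparing the defining inequalities directly. Write $\mathcal{Y}(\overline{\lambda}\times[\overline{k}]) = \overline{\mu}\times[\overline{\ell}]$, so that
$$\mu_j = \sum_{i=0}^{m-j} k_i, \qquad \ell_0=\lambda_m,\quad \ell_j = \lambda_{m-j}-\lambda_{m-j+1}\ (1\le j\le m).$$
By the earlier proposition, $\mathcal{Y}$ is a bijection of $\mathcal{P}_m(N)$ to itself, so we only need to track the inequalities cutting out $\tilde{\triangle}_0,\tilde{\triangle}_1,\tilde{\nabla}_0,\tilde{\nabla}_1$.

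For the first two lines, the condition defining $\tilde{\triangle}_0$ is $\lambda_1+\lambda_m>\lambda_0$, which we rewrite as $\lambda_m > \lambda_0-\lambda_1$. In terms of the image this reads $\ell_0 > \ell_m$, which is exactly the condition for $\mathcal{Y}(\overline{\lambda}\times[\overline{k}])\in\tilde{\nabla}_0$. The reversed inequality gives $\tilde{\triangle}_1\to\tilde{\nabla}_1$ in the same way. Since the equivalence runs in both directions and $\mathcal{Y}$ is a bijection, each of these maps is onto.

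For the last two lines, the condition defining $\tilde{\nabla}_0$ is $k_0>k_m$. We compute $\mu_0 = k_0+\cdots+k_m$, $\mu_1 = k_0+\cdots+k_{m-1}$ and $\mu_m=k_0$. Then
$$\mu_1+\mu_m-\mu_0 = k_0-k_m.$$
So $k_0>k_m$ holds if and only if $\mu_1+\mu_m>\mu_0$, which says the image lies in $\tilde{\triangle}_0$. The case $k_0<k_m$ gives $\tilde{\nabla}_1\to\tilde{\triangle}_1$ in the same way.

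There is an alternative route for the last two lines. One could check that $\mathcal{Y}$ is an involution and then deduce them from the first two. However, $\mathcal{Y}^2$ is not literally the identity on the nose: computing it shows it returns the original data, but this needs care with the indexing. The direct computation above avoids this issue.

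The only genuine obstacle is the edge case $m=1$. There $\lambda_1$ and $\lambda_m$ coincide, and the convention "$t_1+t_m$ becomes $t_1+t_1$" applies. One must confirm that the indices in $\mu_1+\mu_m-\mu_0$ still give $k_0-k_1$ under this convention. In that case the identity becomes $2k_0-(k_0+k_1)=k_0-k_1$, so it still works. We would note this explicitly and otherwise regard the proof as a direct calculation.
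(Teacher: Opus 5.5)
Your proof is correct and is essentially the paper's argument: the paper also just records $\mu_0,\mu_1,\mu_m,\ell_0,\ell_m$ and reads off $\lambda_0<\lambda_1+\lambda_m \iff \ell_0>\ell_m$ and $k_0>k_m \iff \mu_0<\mu_1+\mu_m$. Your $m=1$ check is not a real obstacle, since there $\mu_1=\mu_m=k_0$ and the general identity already covers it. One correction to your aside: $\mathcal{Y}^2$ is literally the identity, because the new parts are $\sum_{i=0}^{m-j}\ell_i=\lambda_j$ and the new multiplicities are $\mu_m=k_0$ and $\mu_{m-j}-\mu_{m-j+1}=k_j$; so the involution route to the last two lines works cleanly, and the paper itself calls $\mathcal{Y}$ an involution.
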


The proofs are calculations, similar to the ones that we will do in a moment in the proof of Proposition \ref{internal symmetry}.

Unlike all other multi-dimensional continued fraction algorithms (that we know of), the triangle map and Young conjugation are quite compatible, a compatibility captured in the following:

\begin{prop} These two diagrams commute:

$$\begin{array}{ccccc}
  \begin{array}{ccc}
  \tilde{\triangle}_0 & \xrightarrow{\mathcal{Y}} & \tilde{\nabla}_0 \\
  T_0 \downarrow && \uparrow T_0 \\
   \tilde{\nabla}_0 & \xrightarrow{\mathcal{Y}} & \tilde{\triangle}_0
  \end{array}

& &&&
 \begin{array}{ccc}
  \tilde{\triangle}_1 & \xrightarrow{\mathcal{Y}} & \tilde{\nabla}_1 \\
  T_1 \downarrow && \uparrow T_1 \\
   \tilde{\nabla}_1 & \xrightarrow{\mathcal{Y}} & \tilde{\triangle}_1
  \end{array}
\end{array} $$

\end{prop}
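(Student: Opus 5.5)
The plan is to reduce each commuting square to one identity of maps and then check that identity in coordinates. In the left square, the two routes from the upper-left $\tilde{\triangle}_0$ to the upper-right $\tilde{\nabla}_0$ are $\mathcal{Y}$ directly and $T_0\circ\mathcal{Y}\circ T_0$. Here $T_0$ on these extended spaces means $\tilde{T}_0$. So commutativity means
$$\mathcal{Y} = \tilde{T}_0\circ \mathcal{Y}\circ \tilde{T}_0 \quad \text{on } \tilde{\triangle}_0 .$$
I will first check that every map lands where the square says:
\begin{itemize}
\item $\tilde{T}_0:\tilde{\triangle}_0\to\tilde{\nabla}_0$ is a bijection, by the earlier proposition on $\tilde{T}_0,\tilde{T}_1$;
\item $\mathcal{Y}$ sends $\tilde{\nabla}_0$ to $\tilde{\triangle}_0$ and $\tilde{\triangle}_0$ to $\tilde{\nabla}_0$, by Proposition \ref{initial symmetry}.
\end{itemize}
Since $\tilde{T}_0$ is invertible from $\tilde{\nabla}_0$ to $\tilde{\triangle}_0$, the displayed identity is equivalent to
$$\mathcal{Y}\circ\tilde{T}_0 = \tilde{T}_0^{-1}\circ\mathcal{Y} \quad\text{on } \tilde{\triangle}_0,$$
and this is what I will verify, using the explicit formula for $\tilde{T}_0^{-1}$ written down earlier.

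\textbf{Computation for $\tilde{T}_0$.} Take $(\overline{\lambda})\times[\overline{k}]\in\tilde{\triangle}_0$ and write $K=k_0+\cdots+k_m$.
\begin{itemize}
\item Left side. First $\tilde{T}_0$ gives $(\lambda_1,\ldots,\lambda_m,\lambda_0-\lambda_1)\times[k_0+k_1,k_2,\ldots,k_m,k_0]$. The partial sums of this multiplicity vector, taken from the top, give the parts of its Young conjugate:
$$(K+k_0,\,K,\,K-k_m,\,\ldots,\,k_0+k_1).$$
The successive differences of the reversed parts give the multiplicities:
$$[\lambda_0-\lambda_1,\ \lambda_m-\lambda_0+\lambda_1,\ \lambda_{m-1}-\lambda_m,\ \ldots,\ \lambda_1-\lambda_2].$$
\item Right side. $\mathcal{Y}$ gives $(K,K-k_m,\ldots,k_0)\times[\lambda_m,\lambda_{m-1}-\lambda_m,\ldots,\lambda_0-\lambda_1]$. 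Then $\tilde{T}_0^{-1}$ sends $(\mu_0,\ldots,\mu_m)\times[c_0,\ldots,c_m]$ to $(\mu_0+\mu_m,\mu_0,\ldots,\mu_{m-1})\times[c_m,c_0-c_m,c_1,\ldots,c_{m-1}]$. With $\mu_0=K$ and $\mu_m=k_0$, the parts are $(K+k_0,K,K-k_m,\ldots,k_0+k_1)$. The multiplicities are $c_m=\lambda_0-\lambda_1$, then $c_0-c_m=\lambda_m-\lambda_0+\lambda_1$, then $c_1,\ldots,c_{m-1}=\lambda_{m-1}-\lambda_m,\ldots,\lambda_1-\lambda_2$.
\end{itemize}
Both sides agree, which proves the left square.

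\textbf{Computation for $\tilde{T}_1$.} I will treat the right square the same way, proving $\mathcal{Y}\circ\tilde{T}_1=\tilde{T}_1^{-1}\circ\mathcal{Y}$ on $\tilde{\triangle}_1$. Here $\tilde{T}_1$ changes only $\lambda_0\mapsto\lambda_0-\lambda_m$ and $k_m\mapsto k_0+k_m$. Under $\mathcal{Y}$ these become, respectively:
\begin{itemize}
\item the first part of the image increases by $k_0$, since only the total sum of multiplicities changes;
\item the first multiplicity decreases from $\lambda_m$ to $\lambda_m$ replaced consistently, and the last multiplicity decreases by $\lambda_m$, since $\lambda_0-\lambda_1$ becomes $\lambda_0-\lambda_m-\lambda_1$.
\end{itemize}
These are exactly the effects of $\tilde{T}_1^{-1}$ on $\mathcal{Y}(\overline{\lambda}\times\overline{k})$: it adds $\mu_m=k_0$ to the first part and replaces the last multiplicity $c_m$ by $c_m-c_0$, where $c_0=\lambda_m$.

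The only real obstacle is bookkeeping. The indexing of $\mathcal{Y}$ reverses the order of the coordinates, so the main risk is an off-by-one or orientation error, for example reading the square as requiring $\mathcal{Y}\tilde{T}_0=\tilde{T}_0\mathcal{Y}$, which is false. Reformulating each square as $\mathcal{Y}\tilde{T}_i\mathcal{Y}=\tilde{T}_i^{-1}$, checked coordinate by coordinate as above, avoids this.
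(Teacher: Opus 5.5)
Your proof is correct and is essentially the paper's argument: the paper computes $\tilde T_0\circ\mathcal{Y}\circ\tilde T_0$ directly and checks that it equals $\mathcal{Y}$, while you verify the equivalent identity $\mathcal{Y}\circ\tilde T_0=\tilde T_0^{-1}\circ\mathcal{Y}$ (whose left side is exactly the paper's intermediate step), and you also carry out the $\tilde T_1$ case that the paper leaves as ``similar.'' One wording slip: in your $\tilde T_1$ bullet, the first multiplicity of the conjugate does not change at all (it stays $\lambda_m$); only the first part (up by $k_0$) and the last multiplicity (down by $\lambda_m$) change, which is exactly what your comparison with $\tilde T_1^{-1}$ uses.
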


\begin{proof}
Let $$(\overline{\lambda} )\times [ \overline{k} ] \in  \tilde{\triangle}_0 .$$
We will simply compute $T_0\circ \mathcal{Y} \circ T_0((\overline{\lambda} )\times [ \overline{k} ] )$ and that this is $\mathcal{Y}((\overline{\lambda} )\times [ \overline{k} ] ).$
We have 

$$\begin{array}{c} 
(\overline{\lambda} )\times [ \overline{k} ] \in  \tilde{\triangle}_0 \\ \\
T_0 \downarrow \\ \\
(\lambda_1, \ldots, \lambda_m, \lambda_0-\lambda_1) \times [ k_0+k_1, k_2, \ldots, k_{m}, k_0] \\ \\
\mathcal{Y} \downarrow \\ \\
(2k_0 + k_1 + \ldots k_m, k_0 + k_1 + \ldots k_m,  k_0 + \ldots k_m, \ldots, k_0+k_1] \\
\times \\
 \left[\lambda_0-\lambda_1, \lambda_m-(\lambda_0-\lambda_1), \lambda_{m-1}-\lambda_m, \ldots, \lambda_1-\lambda_2\right]\\ \\
 T_0\downarrow \\ \\
 (k_0+\ldots + k_m, \ldots, k_0+k_1, k_0) \times [\lambda_m, \lambda_{m-1}-\lambda_m, \ldots, \lambda_0-\lambda_1]\\ \\
 =\\ \\
 \mathcal{Y}((\overline{\lambda} )\times [ \overline{k} ] ).
 \end{array}$$
 as desired.
 
 The $T_1$ case is similar.
\end{proof}

What is important for us is that Proposition \ref{initial symmetry}
 can be refined quite a bit:

\begin{prop}\label{internal symmetry}
\begin{eqnarray*}
 \tilde{\triangle}_0 \cap  \tilde{\nabla}_0 &\xrightarrow{\mathcal{Y}} &  \tilde{\triangle}_0 \cap  \tilde{\nabla}_0 \\
\tilde{\triangle}_0 \cap  \tilde{\nabla}_1 &\xrightarrow{\mathcal{Y}} &  \tilde{\triangle}_1 \cap  \tilde{\nabla}_0 \\
\tilde{\triangle}_1 \cap  \tilde{\nabla}_0 &\xrightarrow{\mathcal{Y}} &  \tilde{\triangle}_0 \cap  \tilde{\nabla}_1 \\
\tilde{\triangle}_1 \cap  \tilde{\nabla}_1 &\xrightarrow{\mathcal{Y}} &  \tilde{\triangle}_1 \cap  \tilde{\nabla}_1 \\
\tilde{\triangle}_D &\xrightarrow{\mathcal{Y}} &  \tilde{\nabla}_D \\
\end{eqnarray*}
\end{prop}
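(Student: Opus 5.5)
The plan is to compute, for an arbitrary point of $\mathcal{P}_m(N)$, exactly which of the defining inequalities of $\tilde{\triangle}_i$ and $\tilde{\nabla}_j$ its image under $\mathcal{Y}$ satisfies, and to express the answer in terms of the original coordinates. Write $K_j = k_0 + \cdots + k_j$. Then $\mathcal{Y}((\overline{\lambda})\times[\overline{k}]) = (\overline{\mu})\times[\overline{\kappa}]$, where $\mu_i = K_{m-i}$ for $0\le i\le m$, and
$$\kappa_0=\lambda_m, \qquad \kappa_j = \lambda_{m-j}-\lambda_{m-j+1} \quad (1\le j\le m).$$
By the earlier proposition that $\mathcal{Y}$ is a bijection of $\mathcal{P}_m(N)$, the image lies in $\mathcal{P}_m(N)$, so only the two distinguishing inequalities need to be checked.

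First I test membership of the image in $\tilde{\triangle}_0$, $\tilde{\triangle}_1$ or $\tilde{\triangle}_D$. This is decided by comparing $\mu_0$ with $\mu_1+\mu_m$, that is, $K_m$ with $K_{m-1}+k_0$. Subtracting $K_{m-1}$, this is the comparison of $k_m$ with $k_0$. Hence the image lies in $\tilde{\triangle}_0$, $\tilde{\triangle}_1$ or $\tilde{\triangle}_D$ according as the original point lies in $\tilde{\nabla}_0$, $\tilde{\nabla}_1$ or $\tilde{\nabla}_D$.

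Next I test membership of the image in $\tilde{\nabla}_0$, $\tilde{\nabla}_1$ or $\tilde{\nabla}_D$. This is decided by comparing $\kappa_0=\lambda_m$ with $\kappa_m=\lambda_0-\lambda_1$, which is the comparison of $\lambda_1+\lambda_m$ with $\lambda_0$. Hence the image lies in $\tilde{\nabla}_0$, $\tilde{\nabla}_1$ or $\tilde{\nabla}_D$ according as the original point lies in $\tilde{\triangle}_0$, $\tilde{\triangle}_1$ or $\tilde{\triangle}_D$. Combining the two computations gives
$$\mathcal{Y}\bigl(\tilde{\triangle}_i\cap\tilde{\nabla}_j\bigr)\subseteq \tilde{\triangle}_j\cap\tilde{\nabla}_i,$$
which yields all four listed inclusions, and also $\mathcal{Y}(\tilde{\triangle}_D)\subseteq\tilde{\nabla}_D$. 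This argument also reproves Proposition \ref{initial symmetry}.

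To upgrade these inclusions to the bijections the arrows suggest, I would verify that $\mathcal{Y}\circ\mathcal{Y}$ is the identity. The parts of $\mathcal{Y}(\mathcal{Y}(\cdot))$ are the partial sums of $\overline{\kappa}$, and these telescope back to $\lambda_m,\lambda_{m-1},\ldots,\lambda_0$ in the required order. The multiplicities are the successive differences of the $\mu_i$, which recover $k_0,\ldots,k_m$. Since $\mathcal{Y}$ is an involution, each inclusion, applied to the target set, gives the reverse inclusion, so every map in the statement is onto. The only real obstacle is bookkeeping of the index reversal in $\mathcal{Y}$, in particular correctly identifying $\mu_1=K_{m-1}$, $\mu_m=K_0=k_0$ and $\kappa_m=\lambda_0-\lambda_1$; once those are right, everything reduces to one-line cancellations.
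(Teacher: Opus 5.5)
Your proof is correct and follows the paper's route. Both arguments reduce membership of the image to two comparisons: $\mu_0$ against $\mu_1+\mu_m$, which is $k_m$ against $k_0$, and $l_0=\lambda_m$ against $l_m=\lambda_0-\lambda_1$. Your uniform formulation $\mathcal{Y}(\tilde{\triangle}_i\cap\tilde{\nabla}_j)\subseteq\tilde{\triangle}_j\cap\tilde{\nabla}_i$ covers all cases at once, including the $\tilde{\triangle}_D$ case the paper leaves as ``similar''; your check that $\mathcal{Y}\circ\mathcal{Y}=\mathrm{id}$ also gives surjectivity, which the paper's proof does not address.
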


\begin{proof}

We set 
\begin{eqnarray*}
\mathcal{Y} \left((\overline{\lambda}) \times [ \overline{k}] \right) &=& \mathcal{Y} \left( (\lambda_0 , \ldots, \lambda_m) \times [ k_0, \ldots, k_m] \right)  \\
&=& (\mu_0, \ldots \mu_m) \times [l_0, \ldots, l_m] \\
&=& (\overline{\mu}) \times [ \overline{l}]
\end{eqnarray*}

Then we have 
\begin{eqnarray*}
\mu_0 &=& k_0 + \ldots + k_m \\
\mu_1 &=& k_0 + \ldots + k_{m-1} \\
\mu_m &=& k_0  \\
l_0 &=&\lambda_m \\
l_m&=& \lambda_0 - \lambda_1
\end{eqnarray*}

We do each in turn.

Start with  $$(\overline{\lambda}) \times [ \overline{k}] \in  \tilde{\triangle}_0 \cap  \tilde{\nabla}_0 .$$
Hence

$$\lambda_0 < \lambda_1 + \lambda_m, k_0>k_m.$$
Then we indeed get 
$$\mu_0 < \mu_1 + \mu_m, l_0>l_m.$$

Next, suppose   $$(\overline{\lambda}) \times [ \overline{k}] \in  \tilde{\triangle}_0 \cap  \tilde{\nabla}_0 .$$

Hence

$$\lambda_0 < \lambda_1 + \lambda_m, k_0<k_m.$$

Now we have 
\begin{eqnarray*}
\mu_0 &=& k_0 + \ldots + k_{m-1} + k_m \\
&>&k_0 + \ldots + k_{m-1} + k_0 \\
&=& \mu_1 + \mu_m
\end{eqnarray*}

and

\begin{eqnarray*}
l_0 &=& \lambda_m \\
&>&\lambda_0 - \lambda_1 \\
&=&l_m
\end{eqnarray*}

The other two are similar.
\end{proof}

This gives us four relevant regions of the natural extension

$$\begin{array}{cc}
  
\begin{tikzpicture}[scale=5]

\draw[->](0,0)--(1.2, 0);
\draw[->](0,0)--(0, 2.2);
\draw[dashed](1,0)--(1, 1);
\draw[dashed](.5,0)--(.5, 2);

\node[right] at (1.2,0){$\overline{t}$};
\node[left]at (0,1.2){$\overline{u}$};

\draw[-](1,.03)--(1, -.03);
\node[below] at (1,0){$1$};

\draw[-](.5,.03)--(.5, -.03);
\node[below] at (.5,0){$\lambda_0= \lambda_1+\lambda_m$};

\node[left] at (1.1,1.75){$\begin{array}{c} N=\overline{t} \cdot \overline{u} \end{array}$};

\draw[scale=1, domain=.49:1, smooth, variable=\x] plot ({\x}, {1/(  \x)});

\draw[scale=1, domain=0:1, smooth, variable=\x, dashed] plot ({\x}, {1/( 1+  \x)});

\draw[->](.49,1/.49)--(.49, 1/.49);

\node[left] at (.43,1.2){$\begin{array}{c} \tilde{\triangle}_1\cap \tilde{\nabla}_1 \end{array}$};
\node[left] at (.43,.4){$\begin{array}{c} \tilde{\triangle}_1\cap \tilde{\nabla}_0 \end{array}$};

\node[left] at (.95,1){$\begin{array}{c} \tilde{\triangle}_0\cap \tilde{\nabla}_1 \end{array}$};
\node[left] at (.95,.4){$\begin{array}{c} \tilde{\triangle}_0\cap \tilde{\nabla}_0 \end{array}$};

\node[left] at (1.3,1.1){$=$};

\end{tikzpicture}

& \begin{tikzpicture}[scale=5]

\draw[->](0,0)--(1.2, 0);
\draw[->](0,0)--(0, 2.5);
\draw[dashed](1,0)--(1, 1);
\draw[dashed](.5,0)--(.5, 2);

\node[right] at (1.2,0){$\overline{t}$};
\node[left]at (0,1.2){$\overline{u}$};

\draw[-](1,.03)--(1, -.03);
\node[below] at (1,0){$1$};

\draw[-](.5,.03)--(.5, -.03);
\node[below] at (.5,0){$\lambda_0= \lambda_1+\lambda_m$};

\node[left] at (1.1,1.75){$\begin{array}{c} N=\overline{t} \cdot \overline{u} \end{array}$};

\draw[scale=1, domain=.49:1, smooth, variable=\x] plot ({\x}, {1/(  \x)});

\draw[scale=1, domain=0:1, smooth, variable=\x, dashed] plot ({\x}, {1/( 1+  \x)});

\draw[->](.49,1/.49)--(.49, 1/.49);

\node[left] at (.43,1.2){Region II};
\node[left] at (.43,.4){Region III};

\node[left] at (.95,1){Region I};
\node[left] at (.95,.4){Region IV};

\end{tikzpicture}
\end{array} $$

Proposition \ref{internal symmetry} is simply saying that 
\begin{eqnarray*}
\mbox{Region I } &\xrightarrow{\mathcal{Y}} & \mbox{Region III }  \\
\mbox{Region II }  &\xrightarrow{\mathcal{Y}} &  \mbox{Region II}  \\
\mbox{Region III } &\xrightarrow{\mathcal{Y}} & \mbox{Region I }  \\
\mbox{Region IV } &\xrightarrow{\mathcal{Y}} & \mbox{Region IV } \\
\end{eqnarray*}

The Young conjugate map is an involution about the fixed set $\mathcal{F}.$  This can be viewed diagrammatically via

\begin{tikzpicture}[scale=5]

\draw[->](0,0)--(1.2, 0);
\draw[->](0,0)--(0, 2.5);
\draw[dashed](1,0)--(1, 1);
\draw[dashed](.5,0)--(.5, 2);

\node[right] at (1.2,0){$\overline{t}$};
\node[left]at (0,1.2){$\overline{u}$};

\draw[-](1,.03)--(1, -.03);
\node[below] at (1,0){$1$};

\draw[-](.5,.03)--(.5, -.03);
\node[below] at (.5,0){$\lambda_0= \lambda_1+\lambda_m$};

\node[left] at (1.1,1.75){$\begin{array}{c} N=\overline{t} \cdot \overline{u} \end{array}$};

\draw[scale=1, domain=.4:1, smooth, variable=\x] plot ({\x}, {1/(  \x)});

\draw[scale=1, domain=0:1, smooth, variable=\x, dashed] plot ({\x}, {1/( 1+  \x)});

\draw[scale=1, domain=0:1, smooth, variable=\x, dotted] plot ({\x},  { (-1)*\x + (7/6)});

\draw[->](2/5,5/2)--(2/5, 5/2);

\node[left] at (.43,1.2){Region II};
\node[left] at (.43,.4){Region III};

\node[left] at (.95,1){Region I};
\node[left] at (.95,.4){Region IV};

\draw[->](1.2,.55)--(.89, .33);

\node[above] at (1.2,.55){$\mathcal{F}$};

\draw[->] (.7,.2) arc (-40:-5:.5);

\node[below] at (.8,.3){$\mathcal{Y}$};

\end{tikzpicture}

Finally, note that in terms of the cylinders in Section \ref{homog cylinders}, we have 
\begin{eqnarray*}
\mbox{Region I} &=& \tilde{\triangle}_{(0) \times [1] } \\
\mbox{Region II} &=& \tilde{\triangle}_{(1) \times [1] } \\
\mbox{Region III} &=& \tilde{\triangle}_{(1) \times [0] } \\
\mbox{Region IV} &=& \tilde{\triangle}_{(0) \times [0] }
\end{eqnarray*}
We believe this is new even in the extremely classical $m=1$ case.

\subsection{The non-homogeneous  or affine case}

We want to find the non-homogeneous  or affine Young conjugate map $\mathcal{Y}_{\mbox{A}}$.

For a given 
$$(t_1, \ldots, t_m) \times [u_1, \ldots, u_m]$$
set
$$u_0 = N - t_1 u_1 - \ldots - t_m u_m.$$

The affine Young conjugate map is the composition of the following (for any fixed positive number $N$):

$$\begin{array}{c}
(t_1, \ldots, t_m) \times  [u_1, \ldots, u_m)] \\ \\
\downarrow \\ \\
\left(  1  ,    t_1  ,   \ldots,    t_m      \right)  \times [u_0, u_1, \ldots, u_m]  \\ \\
\downarrow \mathcal{Y} \\ \\
(u_0 + \ldots + u_m, u_0 + \ldots + u_{m-1}, \ldots, u_0) \times [t_m, t_{m-1}-t_m, t_{m-2} - t_{m-1} , \ldots, 1 - t_1] \\ \\
\downarrow \\ \\
\left(   \frac{  u_0 + \ldots + u_{m-1}}{ u_0 + \ldots + u_{m}}, \ldots,  \frac{  u_0 }{ u_0 + \ldots + u_{m}} \right) \\
\times \\
 \left[ (u_0 + \ldots + u_{m})(t_{m-1}- t_m) , \ldots , (u_0 + \ldots + u_{m})(1- t_1)\right] \\
  \end{array}$$

For $m=1$, we have then 
$$\begin{array}{c}
(t) \times  [u] \\ \\
\downarrow  \mathcal{Y}_{\mbox{A}}   \\ \\
\\
\left(   \frac{  N-t u}{ N-tu + u}\right)   \times 
 \left[ (N-tu + u) (1- t)\right] \\
  \end{array}$$

Again we believe this is new even in the  $m=1$ case.

\section{The homogeneous fast extended triangle map }\label{fast homogeneous case}

\subsection{The maps}

Given any two maps $\tilde{T}_0$ and $\tilde{T}_1$, it is standard to collate the various $  \tilde{T  } _1$'s, looking at the map
$$ \tilde{G  }_n =  \tilde{T  }_0 \circ  \tilde{T  }_1^{n}.$$
The $ \tilde{G  }_n$ are called the fast version, the multiplicative version or the Gauss version.
We want to find the natural extensions for the Gauss maps associated to the triangle map.  We will also want to find an associated map $  \tilde{G  }_n^*$ so that
$$\mathcal{Y} =  \tilde{G  }_n^* \circ \mathcal{Y} \circ  \tilde{G  }_n.$$

Via direct calculations we get that 

$$\begin{array}{c}
(\lambda_0, \ldots, \lambda_m) \times [k_0, k_1, \ldots, k_m] \\ \\
\downarrow \tilde{G}_n =  \tilde{T  }_0 \circ  \tilde{T  }_1^{(n)}  \\
\\
(\lambda_1, \lambda_2, \ldots, \lambda_m, \lambda_0-\lambda_1 - n\lambda_m) \times [k_0+k_1, k_2, k_3, \ldots, k_{m-1}, nk_0 +k_m, k_0]
  \end{array}$$

 From  \cite{BBDGI}, we have that 
  $$ \tilde{G  }_n^* =  \tilde{T  }_1^{(n)} \circ  \tilde{T  }_0,$$
  giving us

$$\begin{array}{c}
(\lambda_0, \ldots, \lambda_m) \times [k_0, k_1, \ldots, k_m] \\ \\
\downarrow \tilde{G}_n^* =    \tilde{T  }_1^{(n)} \circ  \tilde{T  }_0 \\ 
\\
(\lambda_1- n(\lambda_0-\lambda_m) , \lambda_2, \ldots, \lambda_m, \lambda_0-\lambda_1) \times [k_0+k_1, k_2, k_3, \ldots, k_{m}, (n+1) k_0 + n k_1]
  \end{array}$$
  
  We have for $m>1$ 
$$
\mbox{Domain}(\tilde{G}_N)  = \{ \lambda_0 - \lambda_1 - N \lambda_m >0 >  \lambda_0 - \lambda_1 -(N+1)  \lambda_m , k_0>k_m  \}  $$
(the $k_0>k_m$ term is since we must be in the image of $\tilde{T}_0$)
and 
$$\mbox{Domain}(\tilde{G}_N^*)  = \left\{  \begin{matrix} \lambda_0 < \lambda_1+ \lambda_m \\
(N+1) \lambda_1 - N\lambda_0 > \lambda_2 > (N+2) \lambda_1 - (N+1)\lambda_0 \\
\end{matrix} \right\}
$$
( The $\lambda_0 < \lambda_1+ \lambda_m$ is since we must be in the domain of $\tilde{T}_0$)
 and for $m=1$

$$ \mbox{Domain}(\tilde{G}_N)  = \{ \lambda_0 - (N+1) \lambda_1 >0 >  \lambda_0 - \lambda_1 -(N+2)  \lambda_m, k_0>k_1   \}  $$
$$\mbox{Domain}(\tilde{G}_N^*)  =   \left\{  \begin{matrix} \lambda_0 < 2\lambda_1 \\
 (N+2) \lambda_1 > (N+1)\lambda_0 \\
(N+3) \lambda_1 < (N+2) \lambda_0\end{matrix} \right\}
 $$
 The co-domains are, 
 for $m>1$,

$$\mbox{Image}(\tilde{G}_N)  = \{ k_m < k_0, k_{m-1} - Nk_m >0 >  k_{m-1} - (N+1)k_m \} $$
and
$$
\mbox{Image}(\tilde{G}_N^*)  = \left\{ \lambda_0 < \lambda_1 + \lambda_m, Nk_0 < k_m < (N+1) k_0\right\} 
 $$
and for $m=1$
 $$
\mbox{Image}(\tilde{G}_N)  = \{ k_0- (N+1)k_1>0 > k_0- (N+2)k_1  \}  $$
$$\mbox{Image}(\tilde{G}_N^*)  = \left\{\lambda_0 < 2 \lambda_1,  Nk_0 < k_m < (N+1) k_0   \right\}
 $$

We have 
\begin{prop} Young conjugation $\mathcal{Y}$ is a one-to-one onto map
$$\mathcal{Y}:    \mbox{Domain}(\tilde{G}_N) \rightarrow   \mbox{Image}(\tilde{G}_N^*) $$
and
$$\mathcal{Y}:    \mbox{Image}(\tilde{G}_N) \rightarrow   \mbox{Domain}(\tilde{G}_N^*) $$
\end{prop}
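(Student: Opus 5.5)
Since $\mathcal{Y}:\mathcal{P}_m(N)\to\mathcal{P}_m(N)$ is already a bijection, the plan is to show that $\mathcal{Y}$ carries each defining inequality of the source set to a defining inequality of the target set, and conversely. Then $\mathcal{Y}$ restricts to a bijection between the two sets.

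As in the proof of Proposition \ref{internal symmetry}, write $\mathcal{Y}((\overline{\lambda})\times[\overline{k}])=(\overline{\mu})\times[\overline{l}]$. The relevant coordinates are
$$\mu_0=k_0+\cdots+k_m,\quad \mu_1=k_0+\cdots+k_{m-1},\quad \mu_m=k_0,$$
$$l_0=\lambda_m,\quad l_{m-1}=\lambda_1-\lambda_2,\quad l_m=\lambda_0-\lambda_1 .$$
The differences that appear are $\mu_0-\mu_1=k_m$ and $\mu_1-\mu_2=k_{m-1}$. The inverse relations are $\lambda_j=l_0+\cdots+l_{m-j}$ and $k_j=\mu_{m-j}-\mu_{m-j+1}$.

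\textbf{First map.} Start with $\mathrm{Domain}(\tilde G_N)$. The inequality $k_0>k_m$ becomes $\mu_m>\mu_0-\mu_1$, i.e.\ $\mu_0<\mu_1+\mu_m$. This is the first condition of $\mathrm{Image}(\tilde G_N^*)$. The chain $\lambda_0-\lambda_1-N\lambda_m>0>\lambda_0-\lambda_1-(N+1)\lambda_m$ becomes $l_m-Nl_0>0>l_m-(N+1)l_0$, i.e.\ $Nl_0<l_m<(N+1)l_0$. This is the second condition of $\mathrm{Image}(\tilde G_N^*)$, read with $k$ replaced by $l$.

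\textbf{Second map.} Start with $\mathrm{Image}(\tilde G_N)$. The inequality $k_m<k_0$ becomes $\mu_0<\mu_1+\mu_m$ exactly as before. The chain $k_{m-1}-Nk_m>0>k_{m-1}-(N+1)k_m$ becomes a chain in $\mu_1-\mu_2$ and $\mu_0-\mu_1$. Rearranging gives $(N+1)\mu_1-N\mu_0>\mu_2>(N+2)\mu_1-(N+1)\mu_0$, which is the description of $\mathrm{Domain}(\tilde G_N^*)$. Each step is an equivalence, and applying $\mathcal{Y}^{-1}$ reverses it, so surjectivity comes for free from bijectivity on $\mathcal{P}_m(N)$.

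\textbf{Expected obstacle.} The main difficulty is bookkeeping rather than ideas, and it sits in the $m=1$ case. There $\mu_1=\mu_m$ and $\lambda_1=\lambda_m$, so the indices collapse, and the $m=1$ formulas stated above appear to contain index slips. I would redo the $m=1$ case directly from $\tilde G_N=\tilde T_0\circ\tilde T_1^{(n)}$. That composite needs $\lambda_0-\lambda_1-N\lambda_1>0>\lambda_0-\lambda_1-(N+1)\lambda_1$. Under $\mathcal{Y}$ with $m=1$ we have $l_0=\lambda_1$ and $l_1=\lambda_0-\lambda_1$, so this becomes $Nl_0<l_1<(N+1)l_0$. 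This matches $\mathrm{Image}(\tilde G_N^*)$ with $k_m=k_1$. I would then check the remaining $m=1$ conditions in the same way, correcting the displayed inequalities where needed, so the proposition holds with the domains and images actually cut out by the compositions.
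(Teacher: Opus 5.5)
Your proposal is correct and follows the paper's approach. The paper only says ``These are calculations,'' and your translation of each defining inequality through $\mathcal{Y}$, together with bijectivity of $\mathcal{Y}$ on $\mathcal{P}_m(N)$, is exactly that calculation made explicit.

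You are also right that the displayed $m=1$ domain of $\tilde G_N$ has a slip: its upper bound should be $\lambda_0<(N+2)\lambda_1$. The remaining $m=1$ check does go through. Substituting $k_0=\mu_1$, $k_1=\mu_0-\mu_1$, the conditions for $\mbox{Image}(\tilde G_N)$ become $(N+2)\mu_1>(N+1)\mu_0$ and $(N+3)\mu_1<(N+2)\mu_0$, and $\mu_0<2\mu_1$ follows from the first.
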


\begin{proof}These are calculations.
\end{proof}

\begin{definition}
The Gauss natural extension of  the triangle map 
$$\tilde{G}: \tilde{\triangle} \rightarrow \tilde{\triangle} $$
is defined  by setting
$$\tilde{G} (\overline{\lambda} )\times [ \overline{k} ] = \tilde{G}_N (\overline{\lambda} )\times [ \overline{k} ] $$
when
$$(\overline{\lambda} )\times [ \overline{k} ] \in  \mbox{Domain}(\tilde{G}_N) .$$
\end{definition}

\subsection{The cylinders}\label{fast homog cylinders}
As to be expected, this is almost exactly the same as in Subsection \ref{homog cylinders}.

Given two sequences $(i_0, \ldots, i_{n_1})$ and $[j_0, \ldots, j_{n_2}]$, each consisting of positive integers,  define the corresponding  cylinder

$$\tilde{\triangle}_{ (i_0, \ldots, i_{n_1})\times [j_0, \ldots, j_{n_2}]   }^G  $$
to be all $\overline{\lambda} \times \overline{k} \in \tilde{\triangle}$ such that 
$$   \tilde{G}^k(\overline{\lambda} \times \overline{k}) \in  \mbox{Domain}(\tilde{G}_{i_k})   \tilde{G}^{-l}(\overline{\lambda} \times \overline{k}) \in   \mbox{Image}(\tilde{G}_{i_l}),$$
for any  integers $k$ and $l$ with $0\leq k \leq n_1$ and any $0\leq l \leq n_2.$

We have 

\begin{prop} For all integers  $k$ with $0\leq k \leq n_1$, we have 
$$\tilde{G}^{k}\tilde{\triangle}_{  (i_0, \ldots, i_{n_1})\times (j_0, \ldots, j_{n_2})   }^G =   \tilde{\triangle}_{ (i_{k+1},  \ldots, i_{n_1}) \times  (i_k, i_{k-1}, \ldots, i_{k_0}, j_0, \ldots, j_{n_2} ) }^G  $$
\end{prop}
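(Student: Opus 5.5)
The plan is to argue exactly as for Proposition \ref{homog cylinders}: everything reduces to the one-step statement ($k=1$) and then induction on $k$. The key structural input is that $\tilde{G}$ is a bijection from the disjoint union of the domains $\mbox{Domain}(\tilde{G}_N)$ onto the disjoint union of the images $\mbox{Image}(\tilde{G}_N)$. Each $\tilde{G}_N$ is a bijection from its domain onto its image. This is because $\tilde{G}_N=\tilde{T}_0\circ\tilde{T}_1^{(N)}$ is a composition of the bijections $\tilde{T}_1:\tilde{\triangle}_1\to\tilde{\nabla}_1$ and $\tilde{T}_0:\tilde{\triangle}_0\to\tilde{\nabla}_0$ from the earlier proposition, restricted to the points following exactly $N$ steps of type $1$ and then one of type $0$. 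One can also read it off the explicit formula: the map is linear, with inverse $k_0=k_m'$, $k_1=k_0'-k_m'$, $k_m=k_{m-1}'-Nk_m'$, and so on.

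I will first check that the images for distinct $N$ are pairwise disjoint. This is immediate from the defining inequalities $Nk_m<k_{m-1}<(N+1)k_m$ for $m>1$, and from the analogous inequalities for $m=1$. With this, $\tilde{G}^{-1}$ is well defined on $\bigcup_N\mbox{Image}(\tilde{G}_N)$ and equals $\tilde{G}_N^{-1}$ on the $N$-th piece. I will read the backward condition in the cylinder definition as $\tilde{G}^{-l}(\overline{\lambda}\times\overline{k})\in\mbox{Image}(\tilde{G}_{j_l})$, which matches the indexing in the statement.

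For the one-step case, take $x$ in $\tilde{\triangle}^G_{(i_0,\ldots,i_{n_1})\times[j_0,\ldots,j_{n_2}]}$ and set $y=\tilde{G}x$. There are two checks.
\begin{itemize}
\item \emph{Forward conditions.} Since $\tilde{G}^k y=\tilde{G}^{k+1}x$, the point $y$ satisfies the forward conditions with the shifted word $(i_1,\ldots,i_{n_1})$.
\item \emph{Backward conditions.} Since $x\in\mbox{Domain}(\tilde{G}_{i_0})$, we get $y=\tilde{G}_{i_0}x\in\mbox{Image}(\tilde{G}_{i_0})$, so the new zeroth backward symbol is $i_0$. Moreover $\tilde{G}^{-1}y=x$ and $\tilde{G}^{-l-1}y=\tilde{G}^{-l}x$, so the remaining backward symbols are $j_0,\ldots,j_{n_2}$.
\end{itemize}
This gives $\tilde{G}(\text{cylinder})\subseteq\tilde{\triangle}^G_{(i_1,\ldots)\times[i_0,j_0,\ldots]}$.

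For the reverse inclusion, take $y$ in the right-hand cylinder. Then $y\in\mbox{Image}(\tilde{G}_{i_0})$, so $x:=\tilde{G}_{i_0}^{-1}y$ lies in $\mbox{Domain}(\tilde{G}_{i_0})$ and $\tilde{G}x=y$. Running the same bookkeeping backwards shows that $x$ lies in the left-hand cylinder. Iterating $k$ times then gives the statement, with the backward word $(i_{k-1},\ldots,i_0,j_0,\ldots,j_{n_2})$. I will flag that the stated index list "$i_k,\ldots,i_{k_0}$" should be read with this natural off-by-one convention.

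The main obstacle is the disjointness and exhaustiveness of the domain and image families, so that $\tilde{G}$ and $\tilde{G}^{-1}$ are genuinely single-valued functions on full-measure sets. I will handle this by expressing $\mbox{Domain}(\tilde{G}_N)$ as $\{x : \tilde{T}^j x\in\tilde{\triangle}_1 \text{ for } j<N,\ \tilde{T}^N x\in\tilde{\triangle}_0\}$, which is a cylinder for $\tilde{T}$. Then Proposition \ref{homog cylinders} together with the bijectivity of $\tilde{T}_0$ and $\tilde{T}_1$ transfers the partition property from the slow map to the fast one, up to the measure-zero set of points that eventually land in $\tilde{\triangle}_D$.
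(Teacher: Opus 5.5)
Your proof is correct and is the one-step shift-and-induct bookkeeping the paper intends; the paper itself gives no argument beyond pointing back to the slow case. You are also right that the printed indices are off by one: $\tilde{G}^{k}$ yields forward word $(i_k,\ldots,i_{n_1})$ and backward word $(i_{k-1},\ldots,i_0,j_0,\ldots,j_{n_2})$.

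There is one slip in your last paragraph. Viewed as a $\tilde{T}$-cylinder, $\mbox{Domain}(\tilde{G}_N)$ must also carry the backward condition $x\in\tilde{\nabla}_0$ (i.e.\ $k_0>k_m$). Without it, for every $x'$ in your $N=1$ set, $\tilde{T}_1x'$ lies in your $N=0$ set and $\tilde{G}_0(\tilde{T}_1x')=\tilde{G}_1(x')$, so $\tilde{G}^{-1}$ would not be single-valued. With that condition in place, your direct check that the sets $\{Nk_m<k_{m-1}<(N+1)k_m\}$ are pairwise disjoint already settles single-valuedness. Exhaustiveness plays no role in this set identity.
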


 As with Theorem \ref{homog natural exension} , we have 
 \begin{theorem}   The map $\tilde{G}$ is the natural extension of the map $T$.

\end{theorem}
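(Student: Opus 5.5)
The plan is to follow the template of Theorem \ref{homog natural exension} and check the four conditions of Section \ref{basics of natural extension} for the fast maps. Here the base map is the fast (Gauss) triangle map $G$ on $\triangle$, defined by $G = T_0\circ T_1^{n}$ on the region where $\lambda_0-\lambda_1-n\lambda_m>0>\lambda_0-\lambda_1-(n+1)\lambda_m$. The projection is $\psi(\overline{\lambda}\times\overline{k})=\overline{\lambda}$.

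\emph{Step 1 (the diagram commutes).} Each $\tilde{G}_n=\tilde{T}_0\circ\tilde{T}_1^{n}$ acts on $\overline{\lambda}$ exactly by $T_0\circ T_1^{n}$. This is visible from the explicit formula: its $\overline{\lambda}$-part is $(\lambda_1,\ldots,\lambda_m,\lambda_0-\lambda_1-n\lambda_m)$. Hence $\psi\circ\tilde{G}=G\circ\psi$. Surjectivity of $\psi$ holds because for any $\overline{\lambda}$ one can choose positive $k_i$ with $\sum k_i\lambda_i=N$. The constraint $k_0>k_m$ in $\mbox{Domain}(\tilde{G}_N)$ can be met as well, e.g. by taking $k_m$ small.

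\emph{Step 2 (injectivity and invertibility).} Show that $\tilde{G}_N$ maps $\mbox{Domain}(\tilde{G}_N)$ bijectively onto $\mbox{Image}(\tilde{G}_N)$, using the earlier bijections $\tilde{T}_0:\tilde{\triangle}_0\to\tilde{\nabla}_0$ and $\tilde{T}_1:\tilde{\triangle}_1\to\tilde{\nabla}_1$ composed. Then check that the images $\mbox{Image}(\tilde{G}_N)$ for distinct $N$ are disjoint and cover $\tilde{\triangle}$ up to measure zero. The conditions $Nk_m<k_{m-1}<(N+1)k_m$ partition the $k$-side, just as the domains partition the $\lambda$-side. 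A faster route to this step is the preceding Proposition together with $\mathcal{Y}=\tilde{G}_N^*\circ\mathcal{Y}\circ\tilde{G}_N$. Since $\mathcal{Y}$ is an involutive diffeomorphism carrying $\mbox{Domain}(\tilde{G}_N)$ to $\mbox{Image}(\tilde{G}_N^*)$, the partition property of the images transfers from that of the domains of $\tilde{G}^*$. Either way, $\tilde{G}$ is one-to-one with inverse $\tilde{G}_N^{-1}=\tilde{T}_1^{-n}\circ\tilde{T}_0^{-1}$ on $\mbox{Image}(\tilde{G}_N)$.

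\emph{Step 3 (the $\sigma$-algebra condition).} Using the cylinder Proposition of Subsection \ref{fast homog cylinders}, I would show that $\tilde{G}^{k}\psi^{-1}$ applied to forward cylinders $\triangle_{(i_0,\ldots,i_{n})}$ produces exactly the two-sided cylinders $\tilde{\triangle}^G_{(i_{k+1},\ldots)\times(i_k,\ldots,i_0)}$. The backward digits then record the past. It remains to check that two-sided cylinders separate points, i.e. have diameters tending to zero in both the $\overline{\lambda}$ and $\overline{k}$ directions. For $\overline{\lambda}$ this is the known convergence of the triangle map, coming from ergodicity \cite{Garrity-Lehmann Duke}. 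For $\overline{k}$ it follows by applying $\mathcal{Y}$: $\mathcal{Y}$ exchanges forward digits of $\tilde{G}$ with backward digits via $\tilde{G}^*$, so $\overline{k}$-contraction reduces to $\lambda$-contraction for $\tilde{G}^*$. I expect this reduction to be the main obstacle. It requires that $\tilde{G}^*$ also has shrinking cylinders, which I would obtain from $\tilde{G}_n^*=\tilde{T}_1^{n}\circ\tilde{T}_0$ being a cyclic rearrangement of the same products of $T_0,T_1$.

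\emph{Step 4 (universality, condition 4).} This follows as in the homogeneous case. Once Step 3 shows that $\overline{\lambda}\times\overline{k}$ is determined by the bi-infinite digit sequence, the map to the abstract Rohlin model $(y_0,y_1,\ldots)$, $y_j=\psi\tilde{G}^{-j}$, is an isomorphism. This invokes Theorem 5.3.1 of \cite{Dajani-Kalle}.
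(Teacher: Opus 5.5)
Your proof follows the same template as the paper. The paper's proof of this theorem is only ``the same as the earlier theorem'': conditions 1, 3, 4 are treated as already discussed, and the $\sigma$-algebra condition is read off from the cylinder Proposition. You go beyond this in one substantive way. The cylinder Proposition alone does not give the generating property. Since $\psi^{-1}(\mathcal{G})$ already records $\overline{\lambda}$ exactly, what is needed is that the backward digits determine $\overline{k}$, that is, that backward cylinders shrink projectively in the $\overline{k}$-direction almost everywhere.

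Your reduction of this to a convergence statement for $G$ via Young conjugation is correct:
\begin{itemize}
\item From $\mathcal{Y}=\tilde{G}_n^*\circ\mathcal{Y}\circ\tilde{G}_n$ one gets $\tilde{G}_n^*\circ\mathcal{Y}=\mathcal{Y}\circ\tilde{G}_n^{-1}$ on $\mbox{Image}(\tilde{G}_n)$.
\item Hence the backward $\tilde{G}$-digits of $x$ are the forward $\tilde{G}^*$-digits of $\mathcal{Y}x$. The $\overline{k}$-part of $x$ is an invertible linear function of the $\overline{\lambda}$-part of $\mathcal{Y}x$.
\item Moreover $T_0\circ(T_1^{n}\circ T_0)=(T_0\circ T_1^{n})\circ T_0$, so $G\circ T_0=T_0\circ G^*$ on $\triangle_0$. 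Thus $G^*$-cylinders are $T_0^{-1}$-images of $G$-cylinders.
\end{itemize}
The paper's version is shorter. Yours isolates and discharges the convergence input on which the natural-extension property actually rests, and it uses the Young symmetry in a way the paper does not.

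Some corrections are needed.
\begin{itemize}
\item Every point of $\mbox{Image}(\tilde{G}_N)$ has $k_0>k_m$: the new $k_0$ is $k_0+k_1$ and the new $k_m$ is the old $k_0$. So the images partition $\tilde{\nabla}_0$, not $\tilde{\triangle}$. Together with the constraint $k_0>k_m$ on the domains, the phase space must be $\tilde{\nabla}_0$. On $\tilde{\nabla}_0$, $\tilde{G}$ is exactly the first-return map of $\tilde{T}$. The paper's ``$\tilde{G}:\tilde{\triangle}\rightarrow\tilde{\triangle}$'' has the same slip.
\item Contraction in the $\overline{\lambda}$-direction is not needed at all.
\item The shrinking of $G$-cylinders that you feed into the $\overline{k}$-reduction is an almost-everywhere weak-convergence result. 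It must be cited or proved separately; it is not a formal consequence of ergodicity. All your separation statements should therefore be read modulo null sets.
\item In the ``faster route'', the half of the Proposition you need is $\mathcal{Y}:\mbox{Image}(\tilde{G}_N)\rightarrow\mbox{Domain}(\tilde{G}_N^*)$, not the half about $\mbox{Domain}(\tilde{G}_N)$.
\item For Step 4 you should record invariance of the measure. $\tilde{G}_N$ acts by $\mathrm{diag}(A,(A^{-1})^{\top})$ with $A=T_0T_1^{N}$, which has determinant one and preserves $\sum\lambda_i k_i$.
\end{itemize}
You are right to take the base map to be the fast map $G$ rather than $T$, as the statement literally reads.
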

 As before,  proof is the same as the earlier theorem.

\section{The non-homogeneous fast extended triangle map or the fast affine triangle map}\label{nonhomogeneous fast case}

\subsection{The maps}

Our domain is 
$$ \tilde{\triangle}_m^A(N) = \{ (t_1, \ldots, t_m)\times [u_1, \ldots, u_m]\in \R^m\times \R^m: t_1 > \cdots > t_m>0, u_i>0,  N- \sum_{i=1}^m t_i u_i >0\}.$$
The map from the affine triangle to the homogeneous domain is 
$$\begin{array}{c}
(t_1, \ldots, t_m) \times [u_1, \ldots, u_m]\\
\downarrow  \\

(1, t_1, \ldots, t_m) \times [u_0, u_1, \ldots, u_m] \\
  \end{array}$$
where $u_0 = N- ( t_1 u_1 + \ldots + t_m u_m).
$

The affine or non-homogeneous fast extended triangle map $\tilde{G}_n^A $ will be the composition of

$$\begin{array}{c}
(t_1, \ldots, t_m) \times [u_1, \ldots, u_m]\\ \\
\downarrow  \\ \\

(1, t_1, \ldots, t_m) \times [u_0, u_1, \ldots, u_m] \\ \\
\downarrow \tilde{G}_n =  \tilde{T  }_0 \circ  \tilde{T  }_1^{(n)}  \\ \\

(t_1, t_2, \ldots, t_m, 1-t_1 - nt_m) \times [u_0+u_1, u_2, u_3, \ldots, u_{m-1}, nu_0 +u_m, u_0] \\ \\
\downarrow  \pi \\ \\
\left(  \frac{t_2}{t_1}, \ldots, \frac{t_m}{t_1}, \frac{1-t_1 - n t_m}{t_1} \right) \times [t_1u_2, \ldots, t_1 u_{m-1}, t_1(nu_0 + u_m), t_1 u_0]
  \end{array}$$

The associated affine version of the map $ \tilde{G}_n^* $ (which we will denote in the admittedly awkward fashion as $(\tilde{G}_n^*)^A$) will be the composition of 

$$\begin{array}{c}
(t_1, \ldots, t_m) \times [u_1, \ldots, u_m]\\
\downarrow  \\
(1, t_1, \ldots, t_m) \times [u_0, u_1, \ldots, u_m] \\ \\
\downarrow (\tilde{G}_n^*)^A =    \tilde{T  }_1^{(n)} \circ  \tilde{T  }_0 \\ \\
(t_1- n(1-t_m) , t_2, \ldots, t_m, 1-t_m) \times [u_0+u_1, u_2, u_3, \ldots, u_{m}, (n+1) u_0 + n u_1] \\ \\
\downarrow \pi  \\ \\
\left( \frac{t_2}{t_1-n(1-t_m)} , \ldots,  \frac{t_m}{t_1-n(1-t_m)},  \frac{1-t_m}{t_1-n(1-t_m)}   \right) \\
\times  \\
\left [(t_1-n(1-t_m)u_2, \ldots, (t_1-n(1-t_m)u_m, (t_1-n(1-t_m)((n+1)u_0 + n u_1)\right]

  \end{array}$$
  
  For the $m=1$ case, we have 
  affine or non-Homogeneous fast extended triangle map $\tilde{G}_n^A $ will be
\begin{eqnarray*}
& (t) \times [u]    &   \\ \\
&  \downarrow \tilde{G}_n^A &   \\ \\
&    \left(  \frac{1-(n+1) t}{t} \right) \times [t(N-tu)] & 
\end{eqnarray*}
and the  map  $(\tilde{G}_n^*)^A $ will be 
\begin{eqnarray*}
& (t) \times [u]    &    \\
&    \downarrow (\tilde{G}_n^*)^A & \\
&     \left(  \frac{1-  t}{(n+1) t - n} \right) \times [((N-tu)+ u))(n+1)(N-tu) + nu)]   &
\end{eqnarray*}

 We have for $m>1$, 
 $$\mbox{Domain}(\tilde{G}_n^A)  = \{ 1 - t_1 - n t_m >0 > 1 - t_1 -(n+1)  t_m , u_0>u_m  \} $$
and
$$\mbox{Domain}((\tilde{G}_n^*)^A)  = \left\{  \begin{matrix} 1 < t_1+ t_m \\
(n+1) t_1 - n > t_2 > (n+2)t_1  - (n+1) \\
\end{matrix}\right\} $$

 For $m=1$,
 $$\mbox{Domain}(\tilde{G}_n^A)  = \{ 1- (n+1) t >0 >  1 -(n+2) t, N- tu > u   \} $$
 and 
 $$\mbox{Domain}((\tilde{G}_n^*)^A) =   \left\{  \begin{matrix} 1 < 2\ t \\
 (n+2)t > (n+1) \\
(n+3) t < (n+2)\end{matrix} \right\}$$

 The co-domains, in turn,  are 
 for $m>1$, 
 $$\mbox{Image}(\tilde{G}_n^A)  = \{ u_m < u_0, u_{m-1} - n u_m >0 >  u_{m-1} - (n+1)   u_m \} $$
 and 
 $$\mbox{Image}((\tilde{G}_n^*)^A)  = \left\{1<t_1+t_m,  n u_0 < u_m < (n+1) u_0\right\},$$
 and for $m=1$
 
 $$\mbox{Image}(\tilde{G}_n^A)  = \{ u_0- (n+1)u>0 > u_0- (n+2)  u  \} $$
 and 
 $$\mbox{Image}((\tilde{G}_n^*)^A)  = \left\{  1<2t, n  (N-tu) < u < (n+1) (N-tu)  \right\}.$$
 
 Note on notation:  here the $n$ is for the number of iterations the $\tilde{T}_1$ term while the capital $N$ is here for the ``partition number.''

\begin{definition}
The non-homogeneous Gauss natural extension of  the triangle map 
$$\tilde{G}^A: \tilde{\triangle} \rightarrow \tilde{\triangle} $$
is defined  by setting
$$\tilde{G}^A (\overline{\lambda} )\times [ \overline{k} ] = \tilde{G}_N^A (\overline{\lambda} )\times [ \overline{k} ] $$
when
$$(\overline{\lambda} )\times [ \overline{k} ] \in  \mbox{Domain}(\tilde{G}_N^A) .$$
\end{definition}

 \subsection{The cylinders}
 This is the same as in Subsection \ref{fast homog cylinders}.  As far as notation goes, we are simply adding a subscript of $A$ for affine at appropriate places.

Given two sequences $(i_0, \ldots, i_{n_1})$ and $[j_0, \ldots, j_{n_2}]$, each consisting of positive integers,  define the corresponding  cylinder

$$\tilde{\triangle}_{ (i_0, \ldots, i_{n_1})\times [j_0, \ldots, j_{n_2}]   }^{G,A}  $$
to be all $\overline{\lambda} \times \overline{k} \in \tilde{\triangle}$ such that 
$$  ( \tilde{G^A})^k(\overline{\lambda} \times \overline{k}) \in  \mbox{Domain}(\tilde{G}_{i_k})^A,   (\tilde{G}^A)^{-l}(\overline{\lambda} \times \overline{k}) \in   \mbox{Image}(\tilde{G}_{i_l})^A,$$
for any  integers $k$ and $l$ with $0\leq k \leq n_1$ and any $0\leq l \leq n_2.$

We have 

\begin{prop} For all integers  $k$ with $0\leq k \leq n_1$, we have 
$$  (\tilde{G}^A)^{k}\tilde{\triangle}_{  (i_0, \ldots, i_{n_1})\times (j_0, \ldots, j_{n_2})   }^{G,A} =   \tilde{\triangle}_{ (i_{k+1},  \ldots, i_{n_1}) \times  (i_k, i_{k-1}, \ldots, i_{k_0}, j_0, \ldots, j_{n_2} ) }^{G,A}  $$
\end{prop}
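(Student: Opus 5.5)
The plan is to deduce the statement from its homogeneous counterpart, the proposition in Subsection \ref{fast homog cylinders}, by conjugating with the affine projection $\pi$. Concretely, the affine fast map $\tilde{G}_n^A$ was defined as the composition of the lift $(t_1,\ldots,t_m)\times[u_1,\ldots,u_m]\mapsto(1,t_1,\ldots,t_m)\times[u_0,\ldots,u_m]$, then $\tilde{G}_n$, then $\pi$. So the first step is to record that $\pi\circ\tilde{G}_n=\tilde{G}_n^A\circ\pi$ on $\mbox{Domain}(\tilde{G}_n)$. The right side only depends on the ratios $\lambda_i/\lambda_0$ and the products $\lambda_0k_i$. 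Moreover, $\tilde{G}_n$ commutes with the Teichm\"uller rescaling $\overline{\lambda}\times\overline{k}\mapsto (c\overline{\lambda})\times(c^{-1}\overline{k})$, since it acts linearly on $\overline{\lambda}$ and contragrediently on $\overline{k}$. Hence the diagram commutes. This is just the fast analogue of the commutative diagrams already proved for $\tilde{T}_0^A,\tilde{T}_1^A$, and it also follows formally by composing those diagrams $n+1$ times.

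The second step is to check that $\pi$ carries the homogeneous pieces onto the affine ones. That is, $\pi(\mbox{Domain}(\tilde{G}_n))=\mbox{Domain}(\tilde{G}_n^A)$ and $\pi(\mbox{Image}(\tilde{G}_n))=\mbox{Image}(\tilde{G}_n^A)$. This is visible from the listed inequalities. Each defining inequality for the homogeneous sets is homogeneous of degree one in $\overline{\lambda}$, or of degree one in $\overline{k}$. Dividing by $\lambda_0$, or multiplying the $k_i$ by $\lambda_0$ and using $u_0=\lambda_0k_0$, converts it into the corresponding affine inequality.

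Restricted to the slice $\lambda_0=1$, the projection $\pi$ is a bijection onto $\tilde{\triangle}^A_m(N)$, with inverse the lift. Consequently $\tilde{G}^A$ is conjugate to $\tilde{G}$ restricted to that slice, after renormalizing. Likewise $(\tilde{G}^A)^{-1}$ is conjugate to $\tilde{G}^{-1}$, because the rescaling commutes with the inverse as well. It follows that
$$\pi\left(\tilde{\triangle}^G_{(i_0,\ldots,i_{n_1})\times(j_0,\ldots,j_{n_2})}\right)=\tilde{\triangle}^{G,A}_{(i_0,\ldots,i_{n_1})\times(j_0,\ldots,j_{n_2})},$$
since membership of each forward and backward iterate in the relevant domain or image is preserved under $\pi$.

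The final step is to apply $\pi$ to both sides of the homogeneous cylinder proposition and use the intertwining $\pi\circ\tilde{G}^k=(\tilde{G}^A)^k\circ\pi$. This yields exactly the claimed identity. Alternatively, one could argue directly: $\tilde{G}^A$ shifts the forward itinerary by one and pushes the current symbol $i_0$ onto the front of the backward itinerary. This holds because $\tilde{G}^A_{i_0}$ maps $\mbox{Domain}(\tilde{G}^A_{i_0})$ bijectively onto $\mbox{Image}(\tilde{G}^A_{i_0})$, and induction on $k$ then finishes.

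The main obstacle is the bookkeeping in the second step. The fast domains involve the extra condition $u_0>u_m$, inherited from the image of $\tilde{T}_0$, and the backward conditions require knowing that $\tilde{G}^A_n$ is a bijection from its domain onto its image, so that $(\tilde{G}^A)^{-1}$ is well defined piecewise. I would first verify this bijectivity by transporting the homogeneous statement through $\pi$, rather than inverting the affine formulas by hand.
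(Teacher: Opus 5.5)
Your first three steps are sound, and they make precise what the paper only asserts. The paper gives no argument beyond saying the affine case is ``the same'' as the homogeneous one. Your observations supply the justification: $\tilde G_n$ commutes with the Teichm\"{u}ller rescaling, $\pi$ only sees rescaling classes (the lift is its inverse on the slice $\lambda_0=1$), and every defining inequality is homogeneous of degree one. These are exactly what make that transfer legitimate.

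The problem is the last step. The identity as displayed is misindexed, yet you claim to obtain it ``exactly''. Use the paper's cylinder definition: forward conditions $(\tilde G^A)^k x\in\mbox{Domain}(\tilde G^A_{i_k})$ for $0\le k\le n_1$, and backward conditions $(\tilde G^A)^{-l}x\in\mbox{Image}(\tilde G^A_{j_l})$ for $0\le l\le n_2$. Take $k=0$. The left side is the cylinder itself, which lies in $\mbox{Domain}(\tilde G^A_{i_0})$. The right side lies in $\mbox{Domain}(\tilde G^A_{i_1})$. These domains are disjoint when $i_0\neq i_1$, so the claim fails for any such nonempty cylinder. The homogeneous proposition you push through $\pi$ has the same shift, so the $\pi$-transport only converts one false identity into another.

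Your alternative argument is the right one. Iterating ``shift the forward word, push the current symbol onto the backward word'' $k$ times actually yields
\[
(\tilde G^A)^{k}\,\tilde{\triangle}^{G,A}_{(i_0,\ldots,i_{n_1})\times(j_0,\ldots,j_{n_2})}
=\tilde{\triangle}^{G,A}_{(i_k,\ldots,i_{n_1})\times(i_{k-1},\ldots,i_0,j_0,\ldots,j_{n_2})},\qquad 0\le k\le n_1 .
\]
So the displayed right-hand side (reading $i_{k_0}$ as $i_0$) is $(\tilde G^A)^{k+1}$ of the cylinder, not $(\tilde G^A)^{k}$. You should state this corrected version and prove it directly. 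Since the homogeneous proposition is itself unproved in the paper, the direct argument is where the content lies, and the $\pi$-reduction becomes optional.

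For the reverse inclusion you need two facts. First, each branch $\tilde G^A_n:\mbox{Domain}(\tilde G^A_n)\to\mbox{Image}(\tilde G^A_n)$ is a bijection. Second, the images are pairwise disjoint in $n$; for $m>1$ the conditions $u_{m-1}-nu_m>0>u_{m-1}-(n+1)u_m$ separate them. Without disjointness, $(\tilde G^A)^{-1}$, and with it the backward conditions, is not well defined.
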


 Just like the earlier three maps, we have 
 an analog of  Theorem \ref{homog natural exension}:
 \begin{theorem}   The map $\tilde{G}$ is the natural extension of the map $T$.

\end{theorem}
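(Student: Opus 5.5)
The plan is to reduce this statement to the homogeneous fast case (the theorem of Section \ref{fast homogeneous case}) and transport everything through the affine projection. The same pattern was used to pass from Theorem \ref{homog natural exension} to its affine analogue. Concretely, the statement should be read as: $\tilde{G}^A$ is the natural extension of the non-homogeneous fast triangle map $G^A$, which is the first-coordinate map $\overline{t}\mapsto \left(\frac{t_2}{t_1},\ldots,\frac{t_m}{t_1},\frac{1-t_1-nt_m}{t_1}\right)$ on the cylinder $1-t_1-nt_m>0>1-t_1-(n+1)t_m$. I would verify the four conditions of Section \ref{basics of natural extension} in order.

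First, the factor map. Take $\psi(\overline{t}\times\overline{u})=\overline{t}$. The formula for $\tilde{G}_n^A$ shows that the $\overline{t}$-output depends only on $\overline{t}$ and coincides with $G^A$, so the square $\psi\circ\tilde{G}^A=G^A\circ\psi$ commutes. Surjectivity of $\psi$ holds because, for fixed $\overline{t}$, small positive $u_i$ make $u_0=N-\sum t_iu_i>0$.

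Second, injectivity (condition 1). By construction $\tilde{G}_n^A=\pi\circ\tilde{G}_n\circ\iota$, where $\iota(\overline{t}\times\overline{u})=(1,\overline{t})\times[u_0,\overline{u}]$. Here $\iota$ is a bijection of $\tilde{\triangle}^A_m(N)$ onto the slice $\lambda_0=1$ of $\mathcal{P}_m(N)$, and $\pi$ restricted to that slice is its inverse. Since $\tilde{G}_n$ commutes with the Teichm\"uller flow, it descends to the quotient, and the induced map is injective because $\tilde{T}_0$ and $\tilde{T}_1$ are (the proposition saying $\tilde{T}_i:\tilde{\triangle}_i\to\tilde{\nabla}_i$ are bijections). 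Different $n$ have disjoint images $\mbox{Image}(\tilde{G}_n^A)$, as their defining inequalities $nu_m<u_{m-1}<(n+1)u_m$ are disjoint. Hence $\tilde{G}^A$ is one-to-one, and onto the union of these images up to a null set.

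Third, the $\sigma$-algebra condition (condition 2). I would use the preceding proposition on the cylinders $\tilde{\triangle}^{G,A}_{(i_0,\ldots,i_{n_1})\times[j_0,\ldots,j_{n_2}]}$: applying $(\tilde{G}^A)^k$ moves the forward digits into the backward slot. Consequently each two-sided cylinder is $(\tilde{G}^A)^{k}$ of a set of the form $\psi^{-1}(\text{forward cylinder of }G^A)$. It then remains to show that two-sided cylinders generate the Borel $\sigma$-algebra, i.e.\ that their diameters shrink. In $\overline{t}$ this is the known shrinking of fast triangle cylinders, which follows from the ergodicity/convergence results \cite{Garrity-Lehmann Duke}. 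In $\overline{u}$ I would use that the backward digits are the forward digits of $\mathcal{Y}$ applied to the point, via the fast analogue of Proposition \ref{internal symmetry}, namely $\mathcal{Y}:\mbox{Image}(\tilde{G}_N)\to\mbox{Domain}(\tilde{G}_N^*)$. This reduces the $\overline{u}$-shrinking to the $\overline{t}$-shrinking for the dual map $G^*$.

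Fourth, condition 4 (minimality) then follows from the abstract Rohlin construction: once conditions 1--3 hold, $\overline{t}\times\overline{u}\mapsto(\psi(\cdot),\psi(\tilde{G}^{A,-1}\cdot),\ldots)$ is an a.e.\ isomorphism onto the abstract model. I expect the main obstacle to be the generating property in the $\overline{u}$ direction. The backward coordinates are governed by the dual algorithm $\tilde{G}_n^*$ rather than by $\tilde{G}_n$, so shrinking of backward cylinders needs either the conjugation argument above or a separate convergence estimate for products of the matrices $(T_i^{-1})^{\top}$. I would try the Young conjugation route first.
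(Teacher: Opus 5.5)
Your proposal takes the same route as the paper. The paper's proof just says the four conditions hold by the preceding discussion, with the $\sigma$-algebra condition coming from the cylinder-shift proposition. You are more careful: that proposition only puts two-sided cylinders into $\bigvee_{k\ge 0}(\tilde{G}^A)^k\psi^{-1}\mathcal{G}$, so one also needs them to generate. Your $\mathcal{Y}$-reduction supplies this, provided you invoke the conjugacy $\tilde{G}_n^*\circ\mathcal{Y}\circ\tilde{G}_n=\mathcal{Y}$ (from $\tilde{T}_i\circ\mathcal{Y}\circ\tilde{T}_i=\mathcal{Y}$) to match every backward digit and not just the first, and use $T_0\circ G_n^*=G_n\circ T_0$ to get cylinder shrinking for $G^*$ from that of $G$.

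For the Rohlin step you should also name the measure, which neither you nor the paper's proof does. Lebesgue measure on $\{u_0>u_m\}$ is $\tilde{G}^A$-invariant since $|\mbox{Jacobian}(\tilde{G}_n^A)|=1$, and it projects under $\psi$ to the invariant density $1/(t_1\cdots t_{m-1}(1+t_m))$ up to a constant.
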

 And as before,  proof is the same.

\section{Graphs of the fast natural extensions}\label{Graphs of the fast natural extensions}
 When $m = 1$ (and setting $N=1$) , we have that  the domains of  the $\tilde{G}_n^A$ create natural refinement of the diagram in Section 
  \ref{m1 case}:

\begin{center}
\begin{tikzpicture}[scale=6]

\draw[->](0,0)--(1.2, 0);
\draw[->](0,0)--(0, 1.2);
\draw[dashed](1,0)--(1, 1/2);

\node[right] at (1.2,0){$t$};
\node[left]at (0,1.2){$u$};

\draw[-](1,.03)--(1, -.03);
\node[below] at (1,-.05){$1$};

\draw[-](.03, 1)--(-.03, 1);
\node[left] at (0,1){$1$};

\draw[-](.03, 1/2)--(-.03, 1/2);
\node[left] at (0,1/2){$1/2$};

\draw[scale=1, domain=0:1, smooth, variable=\t] plot ({\t}, {1/(1+ \t)});

\draw[dashed](1/2,0)--(1/2, 2/3);
\draw[dashed](1/3,0)--(1/3, 3/4);
\draw[dashed](1/4,0)--(1/4, 4/5);
\draw[dashed](1/5,0)--(1/5, 5/6);

\node[] at (.1,.25){$\cdots$};

\node[] at (.75,.25){$\tilde{G}_0^A$};
\node[] at (.4,.25){$\tilde{G}_1^A$};

\draw[->](3/4,3/4)--(.3, .4);
\node[] at (.8,.78){$\tilde{G}_2^A$};

\draw[->](.45,3/4)--(.22, .4);
\node[] at (.48,.79){$\tilde{G}_3^A$};

\end{tikzpicture}
\end{center}
Here the $\tilde{G}_n^A$s refer to their domains.

Similarly the co-domains create a natural refinement of the diagram in Section 
  \ref{m1 case}:

\begin{center}
\begin{tikzpicture}[scale=6]

\draw[->](0,0)--(1.2, 0);
\draw[->](0,0)--(0, 1.2);
\draw[dashed](1,0)--(1, 1/2);

\node[right] at (1.2,0){$t$};
\node[left]at (0,1.2){$u$};

\draw[-](1,.03)--(1, -.03);
\node[below] at (1,-.05){$1$};

\draw[-](.03, 1)--(-.03, 1);
\node[left] at (0,1){$1$};

\draw[-](.03, 1/2)--(-.03, 1/2);
\node[left] at (0,1/2){$1/2$};

\draw[scale=1, domain=0:1, smooth, variable=\t] plot ({\t}, {1/(1+ \t)});

\draw[dotted, scale=1, domain=0:1, smooth, variable=\t] plot ({\t}, {1/(2+ \t)});
\draw[dotted, scale=1, domain=0:1, smooth, variable=\t] plot ({\t}, {1/(3+ \t)});
\draw[dotted, scale=1, domain=0:1, smooth, variable=\t] plot ({\t}, {1/(4+ \t)});
\draw[dotted, scale=1, domain=0:1, smooth, variable=\t] plot ({\t}, {1/(5+ \t)});

\node[] at (1/2,1/10){$\vdots$};

\node[] at (.22,.65){$\tilde{G}_0^A$};
\node[] at (.22,.38){$\tilde{G}_1^A$};

\draw[->](-.16, .36)--(.2, .27);
\node[] at (-.22,.38){$\tilde{G}_2^A$};

\draw[->](-.16, .16)--(.23, .21);
\node[] at (-.22,.16){$\tilde{G}_3^A$};

\end{tikzpicture}

\end{center}
Here the $\tilde{G}_n^A$s refer to their co-domains.

 And when $m = 1$ (and setting $N=1$) , we have that  the domains of  the $(\tilde{G}_n^*)^A$   also create natural refinement of the diagram in Section 
  \ref{m1 case}:

\begin{center}
\begin{tikzpicture}[scale=4]

\draw[->](0,0)--(1.1, 0);
\draw[->](0,0)--(0, 2.2);
\draw[dashed](1,0)--(1, 1);

\node[right] at (1.2,0){$t$};
\node[left]at (0,1.2){$u$};

\draw[-](1,.03)--(1, -.03);
\node[below] at (1,-.05){$1$};

\draw[-](.03, 2)--(-.03, 2);
\node[left] at (0,2){$2$};

\draw[-](.03, 1/2)--(-.03, 1/2);
\node[left] at (0,1/2){$1/2$};

\draw[scale=1, domain=1/2:1, smooth, variable=\t] plot ({\t}, {1/( \t)});

\draw[dashed](1/2,0)--(1/2, 2);
\draw[dashed](2/3,0)--(2/3, 3/2);
\draw[dashed](3/4,0)--(3/4, 4/3);
\draw[dashed](4/5,0)--(4/5, 5/4);

\node[] at (.2,.95){$  (\tilde{G}_0^*)^A  $};
\draw[->](.22, .85)--(.57, .3);

\node[] at (.3,1.3){$  (\tilde{G}_1^*)^A  $};
\draw[->](.29, 1.22)--(.7, .75);

\node[] at (.3,1.8){$  (\tilde{G}_2^*)^A  $};
\draw[->](.29, 1.7)--(.78, 1);

\node[]at (.9, .25){$\cdots$};

\node[]at (.25, .25){$\begin{array}{c}  \mbox{Not}  \\ \mbox{in} \\ \mbox{domain} \end{array}$};

\end{tikzpicture}
\end{center}
Here the $(\tilde{G}_n^*)^A$ s refer to their domains.

And the co-domains of  the $(\tilde{G}_n^*)^A$   create natural refinement of the diagram in Section 
  \ref{m1 case}:

\begin{center}
\begin{tikzpicture}[scale=4]

\draw[->](0,0)--(1.2, 0);
\draw[->](0,0)--(0, 2.1);
\draw[dashed](1,0)--(1, 2);

\node[right] at (1.2,0){$t$};
\node[left]at (0,1.2){$u$};

\draw[-](1,.03)--(1, -.03);
\node[below] at (1,-.05){$1$};

\draw[-](.03, 2)--(-.03, 2);
\node[left] at (0,2){$2$};

\draw[-](.03, 1/2)--(-.03, 1/2);
\node[left] at (0,1/2){$1/2$};

\draw[dotted, scale=1, domain=1/2:1, smooth, variable=\t] plot ({\t}, {1/(1+ \t)});
\draw[dotted, scale=1, domain=1/2:1, smooth, variable=\t] plot ({\t}, {2/(1+2* \t)});
\draw[dotted, scale=1, domain=1/2:1, smooth, variable=\t] plot ({\t}, {3/(1+ 3*\t)});
\draw[dotted, scale=1, domain=1/2:1, smooth, variable=\t] plot ({\t}, {4/(1+4* \t)});

\draw[dashed](1/2,0)--(1/2, 2);
\draw[dashed](0,2)--(1, 2);

\node[]at (.25, 1){$\begin{array}{c}  \mbox{Not}  \\ \mbox{in} \\ \mbox{domain} \end{array}$};

\node[] at (.75,.3){$  (\tilde{G}_0^*)^A  $};

\node[] at (1.2,.6){$  (\tilde{G}_1^*)^A  $};
\draw[->](1.05, .6)--(.7, .68);

\node[] at (1.2,.8){$  (\tilde{G}_2^*)^A  $};
\draw[->](1.05, .8)--(.78, .82);

\node[] at (1.2,1.1){$  (\tilde{G}_3^*)^A  $};
\draw[->](1.05, 1.1)--(.78, .93);

\node[] at (.75,1.5){$ \vdots $};

\end{tikzpicture}
\end{center}
Here the $(\tilde{G}_n^*)^A$ s refer to their co-domains.

We have 
\begin{prop} Young conjugation $\mathcal{Y}$ is a one-to-one onto map
$$\mathcal{Y}:    \mbox{Domain}(\tilde{G}_N^A) \rightarrow   \mbox{Image}(\tilde{G}_N^*)^A $$
and
$$\mathcal{Y}:    \mbox{Image}(\tilde{G}_N)^A \rightarrow   \mbox{Domain}(\tilde{G}_N^*)^A$$
\end{prop}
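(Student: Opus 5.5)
The plan is to deduce this statement from its homogeneous counterpart, the earlier proposition asserting that $\mathcal{Y}$ maps $\mbox{Domain}(\tilde{G}_N)$ bijectively onto $\mbox{Image}(\tilde{G}_N^*)$ and $\mbox{Image}(\tilde{G}_N)$ bijectively onto $\mbox{Domain}(\tilde{G}_N^*)$. The bridge is the affine projection $\pi$ and its section $s:(t_1,\ldots,t_m)\times[u_1,\ldots,u_m]\mapsto(1,t_1,\ldots,t_m)\times[u_0,u_1,\ldots,u_m]$ with $u_0=N-\sum t_iu_i$. By construction $\mathcal{Y}_{\mbox{A}}=\pi\circ\mathcal{Y}\circ s$, and the affine regions are the $\pi$-images of the homogeneous ones. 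For example, $\mbox{Domain}(\tilde{G}_n^A)$ is obtained from $\mbox{Domain}(\tilde{G}_n)$ by setting $\lambda_0=1$, $\lambda_i=t_i$, $k_i=u_i$. So I first record that each of the four homogeneous regions is a cone: it is invariant under the Teichm\"{u}ller flow $(\overline{\lambda},\overline{k})\mapsto(e^s\overline{\lambda},e^{-s}\overline{k})$. This holds because each defining inequality is homogeneous of degree one in $\overline{\lambda}$ alone or in $\overline{k}$ alone, while the constraint $\sum\lambda_ik_i=N$ is preserved. It follows that $\pi$ restricts to a bijection from each homogeneous region modulo the flow onto the corresponding affine region.

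The second step is to check that $\mathcal{Y}$ is equivariant with respect to the flow, in the sense that $\mathcal{Y}(e^s\overline{\lambda},e^{-s}\overline{k})=\mathrm{Teich}_{-s}\mathcal{Y}(\overline{\lambda},\overline{k})$. This is immediate from the definition, since $\mathcal{Y}$ makes the new parts linear in $\overline{k}$ and the new multiplicities linear in $\overline{\lambda}$. Hence $\mathcal{Y}$ descends to the quotient by the flow, and the descended map is exactly $\mathcal{Y}_{\mbox{A}}=\pi\circ\mathcal{Y}\circ s$, independent of the choice of section. Combining this with the homogeneous proposition, $\mathcal{Y}_{\mbox{A}}$ carries $\pi(\mbox{Domain}(\tilde{G}_N))=\mbox{Domain}(\tilde{G}_N^A)$ into $\pi(\mbox{Image}(\tilde{G}_N^*))=\mbox{Image}(\tilde{G}_N^*)^A$, and similarly for the second pair. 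Since $\mathcal{Y}$ is an involution, the descended map is also an involution. Applying the same argument in reverse gives surjectivity, and being an involution gives injectivity.

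As a sanity check, and to guard against typos in the listed affine regions, I will verify the $m=1$ case directly using $\mathcal{Y}_{\mbox{A}}(t,u)=\bigl(\frac{N-tu}{N-tu+u}\bigr)\times[(N-tu+u)(1-t)]$. Take $u_0=N-tu$. The domain condition $\frac{1}{n+2}<t<\frac{1}{n+1}$ together with $u_0>u$ should translate into the image condition on $(t',u')$, with $t'=\frac{u_0}{u_0+u}$ and $u'_0=N-t'u'=(u_0+u)t$. This reduces to comparing $u'_0$ with $n u'$ and $(n+1)u'$, which become linear inequalities in $t$ after cancelling the positive factor $u_0+u$.

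The main obstacle is this bookkeeping: making sure the affine descriptions of the four regions given in the paper really are the $\pi$-images of the homogeneous ones. There is an index shift between $N$ and $n$, and the $m=1$ formulas treat $t_m$ and $t_1$ as the same variable. If a discrepancy appears, I will take the $\pi$-images of the homogeneous regions as the definition and prove the statement for those.
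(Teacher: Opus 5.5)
Your argument is correct, but it is not the paper's route. The paper's proof is just ``These are calculations'': substitute the explicit rational formula for $\mathcal{Y}_{\mbox{A}}$ into the listed affine inequalities and check them by hand. You do that only in your $m=1$ sanity check.

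Your main argument is instead a descent principle:
\begin{itemize}
\item the regions are cones for $\mbox{Teich}_s$;
\item $\pi$ is exactly the quotient of $\mathcal{P}_m(N)$ by this flow, with $s\circ\pi=\mbox{Teich}_{s_0}$ where $e^{s_0}=1/\lambda_0$;
\item $\mathcal{Y}\circ\mbox{Teich}_s=\mbox{Teich}_{-s}\circ\mathcal{Y}$.
\end{itemize}
Hence $\mathcal{Y}_{\mbox{A}}\circ\pi=\pi\circ\mathcal{Y}$, $\mathcal{Y}_{\mbox{A}}$ is an involution, and the affine statement is literally the $\pi$-image of the homogeneous one.

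What your route buys: it avoids all rational-function manipulation, and it shows the affine and homogeneous propositions are one fact. The same argument transports the Region I--IV symmetry to affine coordinates for free. The cost is that all the content sits in the homogeneous proposition, which the paper also only asserts. That check is transparent and linear, though: via $\mu_0-\mu_1=k_m$, $\mu_1-\mu_2=k_{m-1}$, $\mu_m=k_0$, $l_0=\lambda_m$, $l_m=\lambda_0-\lambda_1$, each defining inequality of one region becomes verbatim one of the other.

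One caution about your fallback. For $m=1$ it is the printed homogeneous $\mbox{Domain}(\tilde{G}_N)$ that is wrong, not the affine one.
\begin{itemize}
\item Its right inequality, $\lambda_0-\lambda_1-(N+2)\lambda_m<0$, becomes $\lambda_0<(N+3)\lambda_1$ when $m=1$.
\item Unwinding $\tilde{T}_0\circ\tilde{T}_1^{N}$ gives $\lambda_0<(N+2)\lambda_1$ instead.
\item With the printed version the homogeneous proposition fails. For example, $\lambda_0=(N+\frac{5}{2})\lambda_1$ gives $l_m>(N+1)l_0$.
\item The printed affine interval $\frac{1}{n+2}<t<\frac{1}{n+1}$ is the correct one.
\end{itemize}
So if you hit this discrepancy, recompute the homogeneous regions from $\tilde{T}_0\circ\tilde{T}_1^{N}$ and $\tilde{T}_1^{N}\circ\tilde{T}_0$. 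Do not redefine the affine regions as $\pi$-images of the printed homogeneous ones. In any case, your direct $m=1$ computation settles that case, together with the analogous one for $\mbox{Image}(\tilde{G}_N^A)\rightarrow\mbox{Domain}((\tilde{G}_N^*)^A)$, which reduces to $(n+1)u<u_0<(n+2)u$.
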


\begin{proof}These are calculations.
\end{proof}

\section{The invariant measure} \label{The invariant measure} 

\subsection{The relevant Jacobians}
 Consider
the simplex 
$$\triangle_m^A= \{ (t_1, \ldots , t_m) \in \R^m:  1>t_1 > \cdots > t_m >0\}.$$
We have the two maps
$$T:\triangle_m^A \rightarrow \triangle_m^A$$
and
$$G:\triangle_m^A \rightarrow \triangle_m^A$$
defined by 

$$T(t_1, \ldots, t_m) = \left\{  \begin{array}{ccc} \left( \frac{t_2}{t_1} , \ldots, \frac{t_m}{t_1}, \frac{1-t_1}{t_1} \right)& \mbox{if}& 1< t_1 + t_m  \\
 \left( \frac{t_1}{ 1 - t_m} , \ldots, \frac{t_m}{1- t_m} \right)& \mbox{if}& 1> t_1 + t_m \end{array} \right.$$
 and 
 $$G(t_1, \ldots, t_m) =   \begin{array}{ccc} \left( \frac{t_2}{t_1} , \ldots, \frac{t_m}{t_1}, \frac{1-t_1 - k t_m}{t_1} \right)& \mbox{if}& 1-t_1-kt_m \geq 0 >1-t_1-(k+1)t_m   \end{array} .$$

\begin{prop} We have for all points $\overline{t} = (t_1, \ldots, t_m) \in \triangle_m^A$ that 
$$\mbox{Jacobian}(G) (\overline{t}) = \frac{1}{t_1^{m+1}}.$$
If $ 1< t_1+ t_m$, then 
$$\mbox{Jacobian}(T) (\overline{t}) = \frac{1}{t_1^{m+1}},$$
and if $1>t_1+ t_m$, then
$$\mbox{Jacobian}(T) (\overline{t}) = \frac{1}{(1-t_m)^{m+1}},$$
\end{prop}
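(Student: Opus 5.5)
The plan is to compute each Jacobian determinant directly. All three maps are restrictions to the affine chart $\lambda_0=1$ of a linear map $A$ of $\R^{m+1}$ with $\det A=\pm 1$, followed by renormalisation by the new first coordinate. So I expect the answer to be $|\det A|/(\text{new first coordinate})^{m+1}$. Here the new first coordinate is $t_1$ for $G$ and for $T$ on $1<t_1+t_m$, and $1-t_m$ for $T$ on $1>t_1+t_m$. I will prove this by a concrete column reduction in each case rather than by appeal to a general projective lemma, so the argument stays self-contained. Throughout, "Jacobian" means the absolute value of the determinant; the sign is irrelevant for invariant measures.

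\emph{Case of $G$ on the piece with index $k$.} Write $G=(g_1,\ldots,g_m)$ with $g_i=t_{i+1}/t_1$ for $i\le m-1$ and $g_m=(1-t_1-kt_m)/t_1=(1-kt_m)/t_1-1$. The nonzero partial derivatives are:
\begin{itemize}
\item $\partial g_i/\partial t_1=-t_{i+1}/t_1^2$ and $\partial g_i/\partial t_{i+1}=1/t_1$ for $i\le m-1$;
\item $\partial g_m/\partial t_1=-(1-kt_m)/t_1^2$ and $\partial g_m/\partial t_m=-k/t_1$.
\end{itemize}
Factoring $1/t_1$ out of every row gives $\det DG=t_1^{-m}\det B$. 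In $B$, columns $2,\ldots,m$ are the standard basis vectors $e_1,\ldots,e_{m-1}$, except that column $m$ also carries the entry $-k$ in its last row. Column $1$ of $B$ is $(-t_2/t_1,\ldots,-t_m/t_1,-(1-kt_m)/t_1)^{\top}$.

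Now add $t_{j}/t_1$ times column $j$ to column $1$, for $j=2,\ldots,m$. This kills the first $m-1$ entries of column $1$. The last entry becomes
\[
-\frac{1-kt_m}{t_1}-\frac{kt_m}{t_1}=-\frac{1}{t_1},
\]
and this cancellation of the $k$-dependence is the key point. Expanding along the modified column $1$ leaves a triangular minor with ones on the diagonal. Hence $\det B=\pm 1/t_1$ and $|\det DG|=1/t_1^{m+1}$. The case of $T$ on $1<t_1+t_m$ is the special case $k=0$ of the same computation, so it also gives $1/t_1^{m+1}$.

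\emph{Case of $T$ on $1>t_1+t_m$.} Here $T(\overline t)=\overline t/(1-t_m)$. The differential is $(1-t_m)^{-1}I$ plus a rank-one term: only the last column is modified, by the vector $\overline t/(1-t_m)^2$. So I will factor out $(1-t_m)^{-1}$ from each of the $m$ rows. The result is triangular except for the last column, whose diagonal entry is $1+t_m/(1-t_m)=1/(1-t_m)$. The determinant is therefore $(1-t_m)^{-m}\cdot(1-t_m)^{-1}=1/(1-t_m)^{m+1}$.

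The main obstacle is bookkeeping rather than concept: checking that in the $G$ computation the $k$-dependent terms in the last row cancel exactly after the column reduction. That cancellation is what makes the answer independent of the digit $k$. As a sanity check I will verify the formula against the $m=1$ Gauss map, where $G(t)=1/t-(k+1)$ has derivative of absolute value $1/t^2$.
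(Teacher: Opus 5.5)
Your proposal is correct and takes the same route as the paper, which simply says these Jacobians are direct calculations. Your column reduction for $G$, where the $k$-dependent terms cancel to leave $-1/t_1$, and your upper-triangular computation for $T$ on $1>t_1+t_m$ carry out exactly those calculations; note only that the general $G$ computation treats $t_1$ and $t_m$ as distinct variables and so needs $m\ge 2$ (as the paper assumes), with $m=1$ covered by your check $|G'(t)|=1/t^2$.
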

\begin{proof}
These are calculations.
\end{proof}
 \begin{corollary} Let $\mu$ be Lebesgue measure on the simplex $\triangle_m^A$.  Then for measurable sets $U$ in $\triangle_m^A$, it is not necessarily the case that 
 $$\mu(U) = \mu(T^{-1}(U))$$
 or that 
 $$\mu(U) = \mu(G^{-1}(U)).$$
Thus the Lebesgue measure is not an invariant measure for either  map $T$ or for map $G$.
 \end{corollary}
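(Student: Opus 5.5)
The plan is to show that if Lebesgue measure $\mu$ were invariant, then the Jacobian formulas of the preceding Proposition would force $\sum 1/|\mbox{Jac}| = 1$ at every point, and then to exhibit a point where this sum is not $1$.

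\textbf{Step 1: the transfer formula.} Fix $U\subset\triangle_m^A$ measurable with small diameter. Decompose $T^{-1}(U)$ according to the two branches $T_0$ (on $1<t_1+t_m$) and $T_1$ (on $1>t_1+t_m$). Each branch is a diffeomorphism onto its image, so by change of variables
$$\mu(T^{-1}(U)) = \int_U \left( \frac{1}{|\mbox{Jac}(T)(T_0^{-1}s)|}\,\chi_{T_0(\cdot)}(s) + \frac{1}{|\mbox{Jac}(T)(T_1^{-1}s)|}\,\chi_{T_1(\cdot)}(s)\right) d\mu(s).$$
By the Proposition, these Jacobians are $t_1^{-(m+1)}$ and $(1-t_m)^{-(m+1)}$ evaluated at the preimages.

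\textbf{Step 2: the density of the preimage measure.} Using the inverse branches written earlier in the affine case, the preimages of $s$ are
$$\left(\frac{1}{1+s_m},\frac{s_1}{1+s_m},\ldots\right) \quad\text{and}\quad \left(\frac{s_1}{1+s_m},\ldots,\frac{s_m}{1+s_m}\right).$$
In both cases the relevant quantity equals $1/(1+s_m)$: for the first branch $t_1=1/(1+s_m)$, and for the second $1-t_m=1/(1+s_m)$. Both branches are onto $\triangle_m^A$ up to measure zero, since the corresponding cones are mapped onto $\triangle$. Hence the density of $\mu\circ T^{-1}$ with respect to $\mu$ is
$$2(1+s_m)^{-(m+1)},$$
which is nonconstant in $s$. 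Taking $U$ a small ball near $s_m=0$ gives $\mu(T^{-1}U)\approx 2\mu(U)\neq\mu(U)$.

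\textbf{Step 3: the map $G$.} Do the same for $G$. Its $k$-th branch has inverse whose first coordinate is $t_1 = 1/(1+k s_m + s_{m})$, or a similar expression; I would verify this formula against the branch definitions. This gives the density
$$\sum_k (\cdots)^{m+1}.$$
Evaluating near $s_m\to 0$ shows the sum diverges or at least exceeds $1$, so again $\mu(G^{-1}U)\neq\mu(U)$ for a suitable small $U$.

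\textbf{Main obstacle.} The main difficulty is confirming the exact images of each branch. In particular one must check whether each branch of $T$ is onto the whole simplex, and the same for the fast branches of $G$. However, one point where the density differs from $1$ suffices, and a point near $s_m=0$ with $s_1$ generic avoids the boundary issues, so the argument only needs the images locally.
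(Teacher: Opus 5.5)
Your proposal is correct, and it supplies an argument the paper omits. The paper states the corollary directly after the Jacobian proposition with no proof, implicitly inferring non-invariance from $|\mbox{Jacobian}|\neq 1$. For a many-to-one map that inference is not valid on its own: the doubling map $x\mapsto 2x \bmod 1$ has Jacobian $2$ and still preserves Lebesgue measure. What is actually needed is your transfer criterion: $\mu\circ T^{-1}=\mu$ if and only if $\sum_{y\in T^{-1}(s)}1/|\mbox{Jac}(T)(y)|=1$ almost everywhere.

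Your Step 2 is right. Both branches of $T$ map onto $\triangle_m^A$, and the inverse branches give $t_1=1/(1+s_m)$ and $1-t_m=1/(1+s_m)$ respectively. So the density of $\mu\circ T^{-1}$ is $2(1+s_m)^{-(m+1)}$, which is close to $2$ for small $s_m$. The paper's version is one line but is not really a proof. Yours costs a short computation, but it actually proves the statement and yields the explicit Perron--Frobenius density. That density is the same tool you would use to verify the invariant densities quoted afterwards.

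In Step 3 your guessed inverse is wrong. On the $k$-th branch you have $s_{m-1}=t_m/t_1$ and $s_m=1/t_1-1-k s_{m-1}$, so $t_1=1/(1+k s_{m-1}+s_m)$, not $1/(1+(k+1)s_m)$.

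Every branch of $G$ is onto $\triangle_m^A$. For any $s\in\triangle_m^A$, this $t_1$ together with $t_{i+1}=s_i t_1$ lies in the $k$-th domain, because $1-t_1-kt_m=t_1 s_m>0$ and $1-t_1-(k+1)t_m=t_1(s_m-s_{m-1})<0$. With $|\mbox{Jac}(G)|=t_1^{-(m+1)}$, the density of $\mu\circ G^{-1}$ is therefore $\sum_{k\geq 0}(1+k s_{m-1}+s_m)^{-(m+1)}$.

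This sum does not diverge as $s_m\to 0$ with $s_{m-1}$ fixed; it blows up, like $1/(m s_{m-1})$, only at the corner $s_{m-1}\to 0$. So the ``diverges'' half of your claim is false. Your fallback is correct, though. The $k=0$ term tends to $1$ and the $k\geq 1$ terms are strictly positive, so the density exceeds $1$ on a small ball near $s_m=0$, and that is all you need.
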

 
The natural question is to find an invariant measure.  This is classical for $m=1$, done in \cite{Ass} for the Gauss map, and completely worked out in \cite{Garrity-Lehmann Duke}, where it was shown that in Theorem 7.1 that the invariant measure for $G$ is 
$$\frac{\mathrm{\mu}}{t_1 t_2 \ldots t_{m-1} ( 1 + t_m)}$$
(perversely, in that paper the map $G$ was denoted by $T$) and in Theorem 8.3 the invariant measure for our $T$ is 
$$\frac{\mathrm{\mu}}{t_1 t_2 \ldots t_{m}}.$$
The method in \cite{Garrity-Lehmann Duke}  for deriving these invariant measures was the time-honored approach of guessing, then checking.

There is another approach for finding invariant measures if the natural extension is known.  This is highlighted in Arnoux and Nogueira \cite{Arnoux-Nogueira-93}.  We start with observing that 

\begin{theorem} For all of our  natural extension maps, we have
$$|\mbox{Jacobian}(\tilde{T}_0^A )| = |\mbox{Jacobian}(\tilde{T}_1^A )| =  |\mbox{Jacobian}(\tilde{G}_n^A )| =1. $$

\end{theorem}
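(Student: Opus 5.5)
The cleanest route is to avoid differentiating the affine formulas directly and instead compute everything through the homogeneous picture. Regard $\tilde{T}_i$ as the linear map $\mathrm{diag}(T_i,(T_i^{-1})^{\top})$ on $\R^{m+1}\times\R^{m+1}$. The hypersurface $\mathcal{P}_m(N)$ is cut out by $\overline{k}\cdot\overline{\lambda}^{\top}=N$, and $\tilde{T}_i$ preserves this pairing. Each $T_i$ has determinant $\pm1$: $T_1$ is unipotent upper triangular, and $T_0$ is a cyclic shift composed with an elementary row operation. Hence $\tilde{T}_i$ is volume preserving on $\R^{2m+2}$, and it also preserves the function $\overline{k}\cdot\overline{\lambda}^{\top}$. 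The same then holds for $\tilde{G}_n=\tilde{T}_0\circ\tilde{T}_1^n$.

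Next I would transport this to the affine coordinates $(\overline{t},\overline{u})$, where $\overline{t}\in\R^m$ and $\overline{u}\in\R^m$. For this I use the coordinate change
\[
\Phi:(\lambda_0,\overline{t},\overline{u},N)\mapsto \left(\lambda_0,\ \lambda_0\overline{t},\ \overline{u}/\lambda_0,\ k_0=(N-\overline{t}\cdot\overline{u})/\lambda_0\right).
\]
Its Jacobian determinant should be computed once; I expect it to be a pure power of $\lambda_0$, something like $\pm\lambda_0^{-2}$. The $\lambda_0^{m}$ from $\lambda_0\overline{t}$ cancels against $\lambda_0^{-m}$ from $\overline{u}/\lambda_0$. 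The $\partial k_0/\partial N=1/\lambda_0$ entry is triangular with respect to the remaining ones, so it contributes a clean factor.

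By the commutative diagrams of the affine projection proposition, $\tilde{T}_i$ corresponds in these coordinates to the map $(\lambda_0,\overline{t},\overline{u},N)\mapsto(\lambda_0',\tilde{T}^A_i(\overline{t},\overline{u}),N)$. Here $\lambda_0'=\lambda_0 t_1$ for $i=0$, $\lambda_0'=\lambda_0(1-t_m)$ for $i=1$, and $\lambda_0'=\lambda_0 t_1$ for $\tilde{G}_n$. This map is block triangular, because $N$ is fixed and $\lambda_0'$ is linear in $\lambda_0$ with coefficient $c(\overline{t})$. Its Jacobian is therefore $c(\overline{t})\cdot\mathrm{Jac}(\tilde{T}_i^A)$.

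Equating the two computations through $\Phi$ gives
\[
1=\frac{\lambda_0^{2}}{\lambda_0'^{2}}\, c\,\mathrm{Jac}(\tilde{T}^A_i)=c^{-1}\mathrm{Jac}(\tilde{T}^A_i).
\]
This does not yield $1$ unless the exponent is right, so the power of $\lambda_0$ in $\det D\Phi$ has to be pinned down carefully. This is the main obstacle.

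As a safeguard I would also do the direct check for $\tilde{T}_0^A$ in the order $(t_2,\dots,t_m,t_1;\,u_2,\dots,u_m,u_1)$. The $\overline{t}$-block depends only on $\overline{t}$, so the Jacobian matrix is block lower triangular with diagonal blocks $D_t$ and $D_u$.
\begin{itemize}
\item $\det D_t$ is the Jacobian of $T$ from the previous proposition, $\pm t_1^{-(m+1)}$.
\item $D_u$ has $t_1$ on the entries for $u_2,\dots,u_m$. The last coordinate $t_1(N-\sum t_iu_i)$ has $\partial/\partial u_1=-t_1^2$. Hence $\det D_u=\pm t_1^{m-1}\cdot t_1^2=\pm t_1^{m+1}$.
\end{itemize}
The product has absolute value $1$.

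The same pattern should work for $\tilde{T}_1^A$. There $\det D_t=(1-t_m)^{-(m+1)}$. The $u$-block is lower triangular with entries $(1-t_m)$ for $j<m$, and the last entry is $\partial/\partial u_m=(1-t_m)(1-t_m)$. Together these give $(1-t_m)^{m+1}$.

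For $\tilde{G}_n^A$, the $u$-block after permuting has $t_1$ for $u_2,\dots,u_{m-1}$ and $t_1$ for $u_m$ in $t_1(nu_0+u_m)$, up to a triangular correction. The last coordinate $t_1u_0$ has $\partial/\partial u_1=-t_1^2$. This gives $t_1^{m+1}$, cancelling the $G$-Jacobian $t_1^{-(m+1)}$. The direct method is thus the one I would actually write down, with the homogeneous argument as the conceptual explanation.
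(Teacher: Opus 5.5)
Your direct computation is correct and is essentially the paper's proof, which consists only of the remark that these are calculations. In each case the $\overline{t}$-output depends only on $\overline{t}$, so the Jacobian is block lower triangular. The $\overline{u}$-block, $\pm t_1^{m+1}$ or $(1-t_m)^{m+1}$, is exactly the reciprocal of the $\overline{t}$-block.

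Two small fixes are worth making when you write it up.

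\textbf{The $\tilde{G}_n^A$ case.} The $\overline{u}$-block is not triangular in the order $(u_2,\dots,u_m,u_1)$. The row of $t_1(nu_0+u_m)$ has diagonal entry $t_1(1-nt_m)$, not $t_1$, and it has the above-diagonal entry $-nt_1^2$ in the $u_1$ column. Subtract $n$ times the row of $t_1u_0$ from it. That row becomes the row of $t_1u_m$, and only then is your count $t_1^{m+1}$ justified.

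\textbf{The homogeneous route.} Your ``main obstacle'' is just a miscount. By your own description, $\det D\Phi=\lambda_0^{-1}$, not $\lambda_0^{-2}$: the $\lambda_0$-row gives $1$, the factors $\lambda_0^{m}$ and $\lambda_0^{-m}$ cancel, and the $N$-column gives $\lambda_0^{-1}$. The chain rule then gives $1=(\lambda_0/\lambda_0')\,c\,\mathrm{Jac}(\tilde{T}_i^A)=\mathrm{Jac}(\tilde{T}_i^A)$, since $\lambda_0'=c\lambda_0$.

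So that argument closes, sign included. It is a genuinely more conceptual alternative to the paper's calculation. It handles $\tilde{T}_0^A$, $\tilde{T}_1^A$ and $\tilde{G}_n^A$ at once, and indeed any composite of $\tilde{T}_0$ and $\tilde{T}_1$ such as $\tilde{G}_n^*$. It does so without using the previous proposition's Jacobians of $T$ and $G$.
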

\begin{proof} These are calculations.
\end{proof}

As we now know the natural extensions for the maps $T$ and $G$, we have a non-guessing approach for finding the corresponding invariant measures.

\subsection{The $T$ case}
(As before, we are setting $N=1$, in the formulas for the domain
$$ \tilde{\triangle}_m^A(N) = \{ (t_1, \ldots, t_m)\times [u_1, \ldots, u_m]\in \R^m\times \R^m: t_1 > \cdots > t_m>0, u_i>0,  N- \sum_{i=1}^m t_i u_i >0\}.)$$

Set $$t=(t_1, \ldots, t_m) \in \triangle_m^A.$$
We need to find a function $f(t)$ so that the measure
$$f(t)\mathrm{d} \mu = f(t) \mathrm{d} t_1 \cdots  \mathrm{d} t_m$$
is an invariant measure for the map $T$.
Fix the point $t$.  Consider the set 
$$D(t) = \{ (u_1, \ldots, u_m) \in \R^m:u_i\geq 0, 0\leq t_1u_1 + \ldots + t_m u_m \leq 1\}.$$
Then the desired function $f(t)$ is 
$$f(t) = \int_{D(t)}  \mathrm{d} u_1 \cdots  \mathrm{d} u_m.$$
As this is discussed for  the general case in \cite{Arnoux-Nogueira-93}, we will simply show that it works for the $m=3$ case here.

We have
\begin{eqnarray*}
\int_{D(t)}  \mathrm{d} u_1 \cdots  \mathrm{d} u_3 &=& \int_0^{1/t_3}   \int_0^{(1-t_3u_3 )/t_2}
     \int_0^{(1-t_2u_2 - t_3 u_3)/t_1} \mathrm{d} u_1\mathrm{d} u_2 \mathrm{d} u_3 \\
     &=& \frac{1}{t_1 }    \int_0^{1/t_3}   \int_0^{(1-t_3u_3 )/t_2}
   (1-t_2u_2 - t_3 u_3)\mathrm{d} u_2 \mathrm{d} u_3    \\
   &=&  \frac{1}{t_1 }    \int_0^{1/t_3} \left( (1-t_3u_3) \left(    \frac{1-t_3 u_3}{t_2}   \right) - \frac{t_2}{2} \left(    \frac{1-t_3 u_3}{t_2}   \right)^2\right) \mathrm{d} u_3 \\
   &=& \frac{1}{2t_1t_2}  \int_0^{1/t_3} (1-t_3u_3)^2 \mathrm{d} u_3 \\
   &=& \frac{1}{3\cdot 2\cdot  t_1 t_2 t_3}.
   \end{eqnarray*}
   As the invariant measure is only defined up to a positive constant, we are done.

   \subsection{The $G$ case}
   
   Our domain for the natural extension is now 
   $$\mbox{Domain}(\tilde{G}) = \{ t \times u:1\geq t_1 >\cdots >t_m >0, u_i>0, 1> t_1u_1 + \ldots + t_{m-1} u_{m-1} + (t_m+1) u_m\}$$
   As before, we want  to find a function $g(t)$ so that the measure
$$g(t)\mathrm{d} \mu = g(t) \mathrm{d} t_1 \cdots  \mathrm{d} t_m$$
is an invariant measure for the map $G$.

   Fix the vector $t=(t_1, \ldots, t_m) $
   Set
   $$D(t) = \{ (u_1, \ldots, u_m) : (t_1, \ldots, t_m)\times (u_1, \ldots, u_m) \in \mbox{Domain}(\tilde{G}) \}.$$
   Then 
   $$g(t) =  \int_{D(t)}  \mathrm{d} u_1 \cdots  \mathrm{d} u_m.$$
   
   We will again simply show that it works for the $m=3$ case here.

We have
\begin{eqnarray*}
\int_{D(t)}  \mathrm{d} u_1 \cdots  \mathrm{d} u_3 &=& \int_0^{1/(1+t_3)}   \int_0^{(1-(1+t_3)u_3 )/t_2}
     \int_0^{(1-t_2u_2 - (1+t_3)u_3)/t_1} \mathrm{d} u_1\mathrm{d} u_2 \mathrm{d} u_3 \\
     &=& \frac{1}{t_1 }    \int_0^{1/(1+t_3)}   \int_0^{(1-(1+t_3)u_3 )/t_2}
   (1-t_2u_2 - (1+t_3) u_3)\mathrm{d} u_2 \mathrm{d} u_3    \\
   &=& \frac{1}{2t_1t_2}  \int_0^{1/(1+t_3)} (1-(1+ t_3)u_3)^2 \mathrm{d} u_3 \\
   &=& \frac{1}{3\cdot 2\cdot  t_1 t_2 (1+t_3)}.
   \end{eqnarray*}
   As the invariant measure is only defined up to a positive constant, we are done.

\section{ On Siukaev's work}\label{Siukaev}

While preparing this paper (which is building on the first author's thesis \cite{Fox}), we have been in contact with  David Siukaev, whose ``A Graph-Based Method for Invariant Densities of Multidimensional Continued Fractions'' \cite{Siukaev} finds another interpretation of the natural extension of the triangle map.  We all agree that the rhetorics behind these two papers have quite  different feels.  Our approach seems to be the most natural if you want to use Young conjugations to find the symmetry of the natural extension space (which in turn is the most natural if you want to see the link with integer partitions).  Siukaev's choice is the most natural if you want to use Fougeron's rhetoric of win-loss inductions \cite{Fougeron20, Fougeron24}.  Seeing how these two papers interrelate strikes all three of us as promising.

\section{A.I. Acknowledgement}
We used Gemini AI to check for spelling and grammar errors.

\end{document}